\documentclass[onefignum,onetabnum]{siamonline250211}

\usepackage{lipsum}
\usepackage{amsfonts}
\usepackage{graphicx}
\usepackage{epstopdf}
\usepackage{algorithmic}
\ifpdf
  \DeclareGraphicsExtensions{.eps,.pdf,.png,.jpg}
\else
  \DeclareGraphicsExtensions{.eps}
\fi

\usepackage{enumitem}
\setlist[enumerate]{leftmargin=.5in}
\setlist[itemize]{leftmargin=.5in}

\newsiamremark{remark}{Remark}
\newsiamremark{hypothesis}{Hypothesis}
\crefname{hypothesis}{Hypothesis}{Hypotheses}
\newsiamthm{claim}{Claim}
\newsiamremark{fact}{Fact}
\crefname{fact}{Fact}{Facts}

\headers{An Example Article}{N. Metya}

\title{An Example Article\thanks{Submitted to the editors \today.
}}

\author{Nilava Metya\thanks{Rutgers - the State University of New Jersey, New Brunswick, NJ 
  (\email{nilava.metya@rutgers.edu}).}
  }

\usepackage{amsopn}

\usepackage{cite}
\usepackage{listings}
\usepackage{amssymb}
\usepackage{mathtools}
\usepackage{tikz}
\usetikzlibrary{positioning,fit,backgrounds,calc}
\usepackage{tikz-cd}
\usetikzlibrary{decorations.pathmorphing,nfold}
\usetikzlibrary{matrix}
\tikzset{
  between/.style 2 args={
    to path={
      ($(\tikztostart)!#1!(\tikztotarget)$)
      -- ($(\tikztostart)!#2!(\tikztotarget)$)
      \tikztonodes
    }
  }
}

\crefname{question}{question}{questions}
\Crefname{question}{Question}{Questions}

\headers{ML Degrees via One-Edge Chordal Completion}{N. Metya}
\title{Maximum Likelihood Degrees via One-Edge Chordal Completion
\thanks{
\today.
}
}
\author{Nilava Metya\thanks{Rutgers--The State University of New Jersey, New Brunswick, NJ (\email{nilava.metya@rutgers.edu}).}}
\usepackage{comment}
\usepackage{listings}

\lstdefinelanguage{Macaulay2}{
 morekeywords={restart,ZZ,QQ,ideal,matrix,det,saturate,degree,dim,class,
  apply,select,toList,submatrix,numRows},
 sensitive=true,
 morecomment=[l]--,
 morestring=[b]"
}
\ifpdf
\hypersetup{
 pdftitle={Maximum Likelihood Degrees via One-Edge Chordal Completion},
 pdfauthor={Nilava Metya}
}
\fi

\newcommand{\fa}{~\forall~}
\newcommand{\sett}{\coloneqq}
\newcommand{\cM}{\mathcal M}
\newcommand{\cE}{\mathcal E}
\newcommand{\cB}{\mathcal B}
\newcommand{\cN}{\mathcal N}

\newcommand{\R}{\mathbb R}
\newcommand{\K}{\mathbb K}
\newcommand{\C}{\mathbb C}
\newcommand{\cC}{\mathcal C}
\renewcommand{\d}{\mathrm d}
\newcommand{\cS}{\mathcal S}
\newcommand{\dsb}{\text{DeSiCluB}}
\newcommand{\abs}[1]{\left|#1\right|}

\DeclareMathOperator{\Tr}{Tr}
\DeclareMathOperator{\cof}{Cof}
\DeclareMathOperator{\MLdeg}{MLdeg}
\DeclareMathOperator{\bl}{Bl}
\newcommand{\lb}{\left[}
\newcommand{\rb}{\right]}
\newcommand{\lp}{\left(}
\newcommand{\rp}{\right)}
\newcommand{\set}[1]{\ensuremath{\left\{#1\right\}}}
\newcommand{\st}{%
 \nonscript\;
 \ifnum\currentgrouptype=16
  \;\middle|\;
 \else
  \;|\;
 \fi
}
\newcommand{\cO}{\mathcal O}
\renewcommand{\it}[1]{{\color{cyan}#1}}

\newtheorem{question}{Question}
\newsiamremark{conjecture}{Conjecture}
\newtheorem{example}{Example}

\begin{document}

\maketitle

\begin{abstract}
The maximum likelihood degree of a Gaussian graphical model measures the algebraic complexity of its likelihood equations. This paper studies maximum likelihood degrees of Gaussian graphical models through chordal completion. When a graph becomes chordal after one missing edge is added, its maximum likelihood degree is determined by the maximal cliques and clique-tree separators containing that edge. This recovers Uhler's formula for complete bipartite graphs with one part of size two, and extends it to deleted-spine clique books, whose degree is unchanged when a page is thickened to a clique. For several fill edges, the same method gives a product formula when no clique or separator bag contains two fill edges. We also determine four classes of vertices at which clique blow-up preserves the degree. Two follow from known results, and two are established here.
\end{abstract}

\begin{keywords}
Algebraic statistics, maximum likelihood degree, Schur complement, Gaussian graphical model, chordal graph, matrix completion.
\end{keywords}

\begin{MSCcodes}
62R01, 05C50, 15A83, 14Q15, 62H22.
\end{MSCcodes}

\section{Introduction}

Gaussian graphical models have been extensively used in the sciences to encode conditional independence. One can associate to a graph a Gaussian model in which nonedges encode conditional independence by imposing zero entries in its concentration matrix. For an $n-$dimensional Gaussian random variable $X\sim \cN(0,\Sigma)$ it is known that $X_{i}$ and $X_{j}$ are independent conditional on the data $\set{X_{k} \st k\in[n], k\ne i,j}$ if and only if $(\Sigma^{-1})_{ij} = 0$. This is the key motivation to define a Gaussian graphical model in the following way.

\begin{definition}[Gaussian graphical model]
Let $G = (V=[n],E)$ be a finite undirected simple graph. The \it{Gaussian graphical model on $G$} comprises all centered nondegenerate Gaussian random vectors $X$ with covariance matrix $\Sigma$ such that $X_{i} \perp X_{j} \st X_{[n]\smallsetminus\set{i,j}}$ for all non-adjacent vertices $i,j\in V$. One can identify this with the set of matrices
\[\cM_{G} = \set{K\in \R^{n\times n} \st K\succ 0, K=K^{\top}, K_{ij} = 0 \fa ij\notin E \text{ and } i\ne j}.\]
\end{definition}
This stems from various studies in literature including Dempster's selection problem \cite{Dempster1972} and the Gaussian Markov model over a graph \cite{SpeedKiiveri1986,Whittaker1990}. We are interested in the problem of Maximum Likelihood Estimation on this model for observed covariances on $\mathcal O_{G}\sett \set{vv\st v\in V}\cup E$. Precisely, given observed covariance data $S=(S_{ij})_{ij \in \cO_G}$, the statistical maximum likelihood problem is to maximize $\ell_S(K)=\log\det K-\Tr(SK)$ over the real positive-definite model $\cM_G$. Statistically, maximizing the log likelihood means finding a suitable $K$ which best explains the observed data assuming the underlying model is the Gaussian graphical model. To study the algebraic complexity of this problem, we complexify the critical-point equations and work on
\[\set{K\in\C^{n\times n}\st K=K^\top, \det K\ne0, K_{ij}=0 \fa ij\notin\cO_G}.\]
Here $\log\det K$ is interpreted locally, and its differential is
independent of the chosen branch. The number of complex critical points, counted with multiplicity for generic $S$, is a constant and is called the \it{maximum likelihood degree of $G$}, or $\MLdeg(G)$ in short. 
This invariant was introduced and developed in the work of \cite{HostenKhetanSturmfels2005,CataneseHostenKhetanSturmfels2006}; see also \cite{DrtonSturmfelsSullivant2009,Sullivantalgstat,HuhSturmfels2014}.

These equations precisely are $K_{ij} = 0 \fa ij\notin \mathcal O_{G}$ and $(K^{-1})_{ij} = S_{ij}\fa ij\in \mathcal O_{G}$. Equivalently, in covariance coordinates $\Sigma=K^{-1}$, one fixes $\Sigma_{ij}=s_{ij}$ for $i=j$ or $ij\in E$ and uses the nonedge entries as variables. Since $(\Sigma^{-1})_{ij}= \cof_{ji}(\Sigma)/\det\Sigma$, the likelihood equations are the corresponding cofactor equations, saturated by $\det\Sigma$. The exact generic quotient-algebra formulation is given in \eqref{eq:likelihoodequations}.

Chordal, or decomposable graphs, form a special class of Gaussian graphical models in the following sense. If $G$ is chordal, then given any generically observed covariances $(\Sigma_{ij})_{ij\in \mathcal O_{G}}$, one can find a unique rational matrix $K=\hat \Sigma^{-1}$ with $K=K^{\top}, K_{ij} = 0\fa ij\notin \cO_{G}$ satisfying $\hat \Sigma_{ij} = \Sigma_{ij}\fa ij\in \cO_{G}$. There is also a connection with matrix completions which we shall not discuss and refer the reader to \cite{GroneJohnsonSaWolkowicz1984}. Chordal completions are also useful computationally for nonchordal covariance selection; see \cite{DahlVandenbergheRoychowdhury2008}.

\begin{example}[Chordal graph]\label{ex:chordalcomputation}
Consider the following graph, with its covariance matrix, with observed covariance data $a,b,c,d$ as indicated and $s_{1},s_{2},s_{3},s_{4}$ on the diagonals.
\[\begin{tikzcd}
	{1\bullet} & {\bullet 2} \\
	{3\bullet} & {\bullet 4}
	\arrow["a", no head, from=1-1, to=1-2]
	\arrow["c"', no head, from=1-1, to=2-2]
	\arrow["b", no head, from=1-2, to=2-2]
	\arrow["d"', no head, from=2-1, to=2-2]
\end{tikzcd}, \qquad 
\Sigma(\alpha,\beta) = 
\begin{bmatrix}
s_{1} & a & \alpha & c\\
a& s_{2} & \beta & b\\
\alpha & \beta & s_{3} & d\\
c & b & d & s_{4}
\end{bmatrix}
\]
where $\alpha,\beta$ are unobserved. The constraints we need are $(\Sigma^{-1})_{13}=(\Sigma^{-1})_{23}=0$. Away from the zeros of $\det \Sigma(\alpha,\beta)$, the equations are $\cof_{13} = \cof_{23} = 0$. Note that both these cofactors are affine-linear in $\alpha,\beta$. Therefore, one can simultaneously solve these linear equations in $\alpha,\beta$ away from a vanishing condition which is polynomial in the observed data $\set{a,b,c,d,s_{1},s_{2},s_{3},s_{4}}$.
\end{example}

\begin{example}[Nontrivial ML degree]\label{ex:4cycledegree}
Consider the cyclic graph $C_{4}$, with its covariance matrix, with observed covariance data $a,b,c,d$ as indicated and $s_{1},s_{2},s_{3},s_{4}$ on the diagonals.
\[\begin{tikzcd}
	{1\bullet} & {\bullet 2} \\
	{4\bullet} & {\bullet 3}
	\arrow["a", no head, from=1-1, to=1-2]
	\arrow["d"', no head, from=1-1, to=2-1]
	\arrow["b", no head, from=1-2, to=2-2]
	\arrow["c"', no head, from=2-1, to=2-2]
\end{tikzcd}, \qquad 
\Sigma(\alpha,\beta) = 
\begin{bmatrix}
s_{1} & a & \alpha & d\\
a& s_{2} & b & \beta\\
\alpha & b & s_{3} & c\\
d & \beta & c & s_{4}
\end{bmatrix}
\]
where $\alpha,\beta$ are unobserved. The constraints we need are $(\Sigma^{-1})_{13}=(\Sigma^{-1})_{24} = 0$. The equations of the numerator vanishing are $\cof_{13} = \cof_{24} = 0$. Now,
\begin{align*}
\cof_{13}(\alpha,\beta) &= \alpha\beta^{2} -\alpha s_{2}s_{4} - \beta(bd + ac) + abs_{4}+cds_{2},\\
\cof_{24}(\alpha,\beta) &= \alpha^{2}\beta -\alpha (bd+ac) -\beta s_{1}s_{3} + ads_{3} + bcs_{1}.
\end{align*}

Homogenize the above polynomials to total degree three by the extra variable $\gamma$. Bezout's theorem gives nine solutions. But one must discount for the roots at infinity, namely those lying on $\gamma=0$, and those of the denominator. The corresponding equations are $\alpha\beta^{2}=\alpha^{2}\beta=0$, whence the roots at infinity are $[1:0:0], [0:1:0]$. But each of these occur with multiplicity two, so we discount a total of four roots. This gives five roots in the affine part $\set{\gamma\ne 0}$ of $\mathbb P^{2}$. For the latter, we claim that there is no common root of the cofactors and the denominator and leave it to the reader to verify this for generic entries. 


\end{example}
Example~\ref{ex:chordalcomputation} is a toy example of the fact that the $\MLdeg$ of a chordal graph is one. In fact, the converse was proved by \cite{AmendolaDinuMichalekVodicka2024}. Two more known cases of closed form formulas are Uhler's formula \cite[Theorem~4.3]{Uhler_mle_ggm} and the cycle formula conjectured in \cite{DrtonSturmfelsSullivant2009} and proved in \cite[Theorem~1.3]{DinuVodicka2025}:
\begin{align}
 \MLdeg(K_{2,m})&=2m+1,& m\geq2.
 \tag{Uhler's formula}\label{eq:uhlerformula}\\
 \MLdeg(C_n)&=(n-3)2^{n-2}+1,& n\geq4
 \tag{DVDSS formula}.\label{eq:dvdss}
\end{align}

In this paper, we first focus on obtaining a combinatorial formula of the $\MLdeg$ of a \it{one-fill chordal graph}. Such a graph is one obtained by deleting an edge from a chordal graph. We prove this using the same proof strategy of \ref{eq:uhlerformula}. Chordal graphs are special because of the tractability of the likelihood equations, popularly known as the clique separator formula of Lauritzen \cite[Proposition~4.5]{lauritzenlecture} mentioned here in Proposition~\ref{prop:chordal-formula}.

\begin{theorem}[One-fill chordal ML-degree formula]\label{thm:one-fill}
Let $H$ be a chordal graph, $uv \in E(H)$, and  $G=H\smallsetminus uv$. Fix a clique tree $T$ of $H$. Let $\cC_{uv}(H)$ be the set of maximal cliques of $H$ containing both $u,v$, and let $\cS_{uv}(H,T)$ be the set of distinct clique-tree separator bags containing both $u,v$. Then, independent of the chosen clique tree $T$,
\begin{equation}\label{eq:off}\tag{one-fill formula}
 \MLdeg(G) = 2\lp\abs{\cC_{uv}(H)}+\abs{\cS_{uv}(H,T)}\rp-1.
\end{equation}
\end{theorem}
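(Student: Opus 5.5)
The proof follows Uhler's strategy for $K_{2,m}$: reduce the likelihood equations of $G$ to a single univariate equation in the unobserved entry $x=\Sigma_{uv}$. The other unobserved entries are eliminated using the chordal structure of $H$.

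\emph{Step 1: elimination.} For the generic data $S$ on $\cO_G$, extend $S$ to $\cO_H$ by setting $\Sigma_{uv}=x$ with $x$ a free parameter. Since $H$ is chordal, Proposition~\ref{prop:chordal-formula} (Lauritzen's clique--separator formula) gives a unique completion $\hat\Sigma(x)$ whose inverse has the sparsity pattern of $H$. It has the explicit form
\[
K(x)=\hat\Sigma(x)^{-1}=\sum_{C}\bigl[(\Sigma_{CC}(x))^{-1}\bigr]^{0}-\sum_{S'}\bigl[(\Sigma_{S'S'}(x))^{-1}\bigr]^{0},
\]
where $C$ runs over the maximal cliques, $S'$ over the separators of $T$ with multiplicity, and $[\,\cdot\,]^{0}$ denotes zero-padding. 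The critical points for $G$ are exactly the values of $x$ with $K(x)_{uv}=0$, together with $\det\hat\Sigma(x)\neq0$. Hence $\MLdeg(G)$ equals the number of roots of
\[
f(x)\sett\sum_{C\in\cC_{uv}(H)}\bigl(\Sigma_{CC}(x)^{-1}\bigr)_{uv}-\sum_{S'\in\cS_{uv}}m_{S'}\bigl(\Sigma_{S'S'}(x)^{-1}\bigr)_{uv}
\]
that avoid the poles. Only cliques and separators containing both $u$ and $v$ contribute to the $uv$ entry.

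\emph{Step 2: the shape of each term.} For a bag $B\ni u,v$, the entry $x$ appears only in positions $uv$ and $vu$ of $\Sigma_{BB}(x)$. Therefore $\det\Sigma_{BB}(x)=-\alpha_B x^2+2\beta_B x+\gamma_B$ is quadratic, and the cofactor is affine: $(\Sigma_{BB}^{-1})_{uv}=(\lambda_B-\mu_B x)/\det\Sigma_{BB}(x)$. A Schur-complement computation over $B\smallsetminus\{u,v\}$ shows that the derivative of $\det\Sigma_{BB}(x)$ is proportional to the cofactor. Consequently each term equals $\tfrac12\frac{d}{dx}\log q_B(x)$ up to sign, with $q_B=\det\Sigma_{BB}$.

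Thus $f$ is the logarithmic derivative of $\prod_C q_C/\prod_{S'} q_{S'}^{m_{S'}}$. Let $N=\abs{\cC_{uv}}+\abs{\cS_{uv}}$ count the distinct bags. The $N$ quadratics $q_B$ have generically distinct roots, giving $2N$ simple poles. After clearing denominators the numerator has degree $2N-1$, and its leading coefficient is generically nonzero. This uses that the residue sum $\sum_C 2-\sum_{S'}2m_{S'}$ is nonzero; the running-intersection property makes the $uv$-containing bags a subtree, so cliques outnumber separator edges by one.

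\emph{Step 3: genericity.} It remains to check three things for generic $S$:
\begin{enumerate}
\item the quadratics $q_B$ for distinct bags are coprime, with distinct simple roots;
\item no root of the numerator is a pole;
\item all $2N-1$ roots are simple.
\end{enumerate}
Then the count is $2N-1$. It is independent of $T$, because the set of distinct separators and the multiset of maximal cliques do not depend on the choice of clique tree. Distinctness of the $q_B$ follows by specializing the entries of $S$ so that the bags' discriminant data differ; nested bags $S'\subset C$ still give independent quadratics.

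\emph{Main obstacle.} I expect Step 3 to be the main difficulty, specifically separators repeated with multiplicity $m_{S'}>1$. A repeated separator contributes a single pole pair but with weight $m_{S'}$. One must show that the numerator still has full degree $2N-1$ and no root collisions.

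To handle this, I would exhibit one explicit specialization of $S$, for instance block-structured data where each bag's quadratic has prescribed distinct roots. At this specialization $f$ is a sum of $N$ independent weighted logarithmic derivatives with nonzero total residue. Its $2N-1$ critical points can then be counted directly, since on the real line there is one between consecutive real poles. That count forces the generic count to be at least $2N-1$, and the degree bound gives the matching upper bound.
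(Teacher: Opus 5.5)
Steps 1--2 and items 1--2 of Step 3 follow the paper's proof in different notation. Since $q_B=\det T_B\cdot D_B$, Jacobi's formula gives $q_B'/q_B=2(c_B-x)/D_B$, so your weighted logarithmic derivative is exactly the paper's $K^H_{uv}(x)=\sum_{B\in\cB_{uv}}\lambda_B\,(c_B-x)/D_B(x)$. Write $m=\abs{\cB_{uv}}$ (your $N$), and let $N(x),D(x)$ be the cleared numerator and denominator. The degree $2m-1$ comes from $\sum_B\lambda_B=1$ on the subtree $T_{uv}$. The coprimality $\gcd(N(x),D(x))=1$ comes from pairwise coprimality of the $D_B$ together with $\alpha_B\beta_B\neq0$, checked at one specialization. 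The paper uses $T_B=I$, $s_B=0$ and $u$-row entries $2^{i/2}$, so the numbers $\|r_B\|^2$ are distinct binary numbers; this is what handles nested bags. Repeated separators are not an obstacle. A separator of multiplicity $m_{S'}$ just carries weight $-m_{S'}\neq0$, and the argument only uses two facts. First, every weight is nonzero, so each simple root of $D_B$ is a genuine pole and not a root of $N(x)$. Second, the weights sum to $1$.

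\textbf{Item 3 fails as you propose to prove it.} For $g(x)=\sum_j c_j/(x-\rho_j)$, the intermediate value theorem forces a root between consecutive real poles only when their residues have the same sign. Here clique poles have residue $+1$ and separator poles have residue $-m_{S'}<0$. On a gap running from a pole of residue $+1$ to one of negative residue, $g\to+\infty$ at both ends; already $1/(x-1)-1/(x-2)>0$ on $(1,2)$. So nothing is forced, and your count of $2m-1$ real roots does not follow.

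\textbf{Item 3 is not needed.} $\MLdeg$ counts critical points with multiplicity (it is the length of the generic fiber). Once $\gcd(N(x),D(x))=1$, you get $\dim_{\K}\K[x,D^{-1}]/(N/D)=\dim_{\K}\K[x]/(N(x))=\deg N=2m-1$ whether or not the roots are simple (\Cref{lem:schur-sum-degree}).

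\textbf{What you must supply instead is Step 1, which you only assert.} It has to preserve multiplicities and must not lose critical points. The paper proves the algebra isomorphism $\mathcal A_G\cong\K[x,\Delta_H^{-1}]/(K^H_{uv})$ in \Cref{lem:algebraic-fill-reduction}. That proof relies on \Cref{lem:Delta-unit}: every clique and separator determinant is a unit on the generic fiber. Without this, critical points of $G$ at which some $\det\Sigma_B$ vanishes are invisible to the chordal formula, and your identification with roots of $f$ off the poles is unjustified. The proof of that lemma even shows $\mathcal A_G$ is a field. Hence $N(x)$ is irreducible over $\K$, so in characteristic zero it has simple roots, which settles item 3 for free.
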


\begin{example}
Consider $C_{4}$ from \Cref{ex:4cycledegree} with $\MLdeg=5$. Notice that this graph is also one-fill chordal. Indeed joining vertices $u=1,v=3$ makes it chordal. Let $H$ be the resulting graph after joining $1,3$. Its maximal cliques are $\cC(H)= \set{ \set{1,2,3} , \set{1,3,4} }$. The only clique tree of $H$ is the tree with these vertices and one edge between them. The running intersection property is trivial to verify. Therefore $\abs{\cC_{1,3}(H)}=2, \abs{\cS_{1,3}(H,T)} = 1$, which gives $\MLdeg(G) = 2(2+1)-1 = 5$ as we earlier checked.
\end{example}

For $m\ge 2$, $G = K_{2,m}$ is indeed a one-fill chordal graph where joining the two vertices on the $2-$side of $G$ yields a chordal graph $H$ with $m$ clique bags and one distinct separator bag, thus recovering \ref{eq:uhlerformula}. Observe that replacing the vertices on the $m-$ side of $G$ with arbitrary cliques, which we refer to as blow-up later, preserves the relevant form and degree of the equations. This gives the deleted-spine clique-book formula in \Cref{cor:book} namely the following. If $m\geq2$ cliques $\set{u,v}\cup A_i$ share only the edge $uv$ and then this edge is deleted, the ML degree is $2m+1$, independent of  $\abs{A_{i}}$. The same method gives a product formula in the case when many edges are deleted from a chordal graph, but with certain independence property, which we call the bag-noninteraction hypothesis.

Finally we investigate \cref{qs:clique-blow-up,qs:special-vertex} about operations preserving the $\MLdeg$. One such classical operation is given by the suspension identity \cite[Theorem~4.11]{SturmfelsUhler2010}. We focus on the blow-up construction wherein one replaces a vertex by a clique and joins every neighbor of the vertex with the vertices of the clique. 

\begin{definition}[Clique blow-up]\label{def:clique-blow-up}
Let $q\geq1$. The \it{clique blow-up $\bl_{w}^{q}(G)$ of $G$ at vertex $w$} replaces $w\in V(G)$ by a clique $Q_w \simeq K_{q}$ of $q$ vertices. More precisely,
\begin{align*}
V(\bl_{w}^{q}(G))&= (V(G)\smallsetminus\set{w})\sqcup Q_{w},\\
E(\bl_{w}^{q}(G))&= E(G\smallsetminus\set{w}) \cup\binom{Q_{w}}{2} \cup\set{az\st a\in\cN_{w,G}, z\in Q_w}.
\end{align*}
where $\abs{Q_{w}} = q$. The vertices of $Q_w$ are called \it{twins}.
\end{definition}

It turns out that blowing-up only special vertices preserves the $\MLdeg$. Four such special classes are identified, as summarized in the table below, though the list is not exhaustive. \cite[Lemma 4.2, Theorem~4.11]{SturmfelsUhler2010} cover simplicial and universal vertex blow-ups. \Cref{thm:one-fill} covers blow-up of a vertex away from a fill edge of a one-fill chordal graph. A genuinely new result \Cref{thm:degree-two-blow-up} shows the invariance under blow-up of a degree-two vertex. 

\begin{equation}\label{eq:table-of-blowups}
\begin{array}{c|c|c}
\text{type of vertex $w$}&\text{condition/meaning}&\text{proof}\\
\hline
\text{simplicial}&\cN_{w,G}\text{ is a clique}
 &\Cref{prop:simplicial-blow-up}\\
\text{universal}&\cN_{w,G}=V(G)\smallsetminus\set{w}
 &\Cref{prop:universal-blow-up}\\
\text{away from a fill edge}&G+uv\text{ is chordal and }w\notin\set{u,v}
 &\Cref{prop:onefill-blow-up}\\
\text{degree two}&\deg_G(w)=2
 &\Cref{thm:degree-two-blow-up}
\end{array}\end{equation}

\paragraph{Organization} \Cref{sec:prem} discusses preliminaries and the scope of the formulas. \Cref{sec:equations} gives the likelihood equations, clique-separator formula, algebraic fill reduction and shows aspects of one-fill reduction. \Cref{sec:one-fill} proves \Cref{thm:one-fill}. \Cref{sec:several-fill} proves the factorization by interaction components and the product formula for noninteracting fill edges. \Cref{sec:blow-up} studies clique blow-up and non-exhaustively answers \cref{qs:special-vertex}. The appendices contain supplementary lemmas and the double-blow-up computation for a negative answer to \cref{qs:clique-blow-up}. 

\section{Preliminaries and perspectives}\label{sec:prem}

\subsection{Graph theoretic preliminaries}
All graphs are finite, simple, and undirected. If $G=(V,E)$ and $F\subseteq E$, then $G\smallsetminus F$ is obtained by deleting the edges in $F$ without deleting any vertex. If $F$ is a set of nonedges of $G$, then $G+F$ is obtained by adding the edges in $F$. We abbreviate $G\smallsetminus\set{uv}, G+\set{uv}$ to $G\smallsetminus uv, G + uv$ respectively. $K_t$, $K_{m,n}$, and $P_n$ will respectively denote the complete graph on $t$ vertices, the complete bipartite graph with  bipartition $[m]\sqcup[n]$, and the path graph on $n$ vertices. For $v\in V(G)$, let $\cN_{v,G}$ be the set of neighbors of $v$ in $G$. Sometimes we simply write $\cN_{v}$ when the graph $G$ is clear from context. 

A \it{clique} is a set of pairwise adjacent vertices, and it is \it{maximal} if it is not properly contained in another clique. The set of maximal cliques of $G$ is denoted by $\cC(G)$. If $B\subseteq V(G)$ and $M \in \R^{B\times B}$ is a matrix indexed by $B$, then $M ^{0} \in \R^{V(G)\times V(G)}$ denotes its embedding into a matrix indexed by $V(G)$, with zero entries outside $B\times B$. A graph is \it{chordal} or \it{decomposable} if every cycle of length at least four has a chord. Equivalently, it has no induced cycle of length at least four.

\begin{definition}[Clique tree, bag, and separator]\label{def:clique-tree}
Let $H$ be chordal. A \it{clique tree} of $H$ is a tree $T$ with vertex set $\cC(H)$ such that, for every $w\in V(H)$, the set $\set{C\in\cC(H)\st w\in C}$ of maximal cliques of $H$ containing $w$ induces a connected subtree of $T$. For $e=CC'\in E(T)$, the set $B_e=C\cap C'$ is called the \it{separator bag} associated to $e$. Each vertex of $T$, namely each maximal clique of $H$, is called a \it{clique bag}.
\end{definition}

The above requirement `for every $w\in V(H)$, the set of maximal cliques of $H$ containing $w$ form a connected subtree of a clique tree $T$' is called the \it{running intersection property}. It is well known, for example from \cite[Chapter~2]{Lauritzen1996}, that every chordal graph admits a clique tree.

\begin{remark}[Connectedness]
For a disconnected chordal graph with $r$ components, choose a clique tree for each component and connect those trees by $r-1$ cross-component edges so that the result is a tree. Every new separator is empty. We adopt $\det(\Sigma_\varnothing) = 1$ and $(\Sigma_\varnothing^{-1}) ^{0}=0$. Thus all formulas below apply without any connectedness assumption. Separator bags are counted with multiplicity in sums indexed by $E(T)$, but $\cS_f(H,T)$ below denotes a set of distinct subsets of $V(H)$.
\end{remark}

We now discuss some important properties of the clique tree. For $f=uv\in E(H)$, let
\[
\cC_f(H) \sett \set{C\in\cC(H)\st\set{u,v}\subseteq C}, \qquad  \cS_f(H,T)\sett\set{B_e\st e\in E(T), \set{u,v}\subseteq B_e},
\]
where the second expression is a set of distinct separator bags. We also write $\cC_{uv}$ and $\cS_{uv}$. No separator $C\cap C'$ of two distinct maximal cliques can itself be a maximal clique. Indeed, otherwise it would be one of $C$ or $C'$ whence one maximal clique would be properly contained in the other. Consequently, the following union is disjoint:
\[\cB_f(H,T)=\cC_f(H)\sqcup\cS_f(H,T).\]
For $B\in\cB_f(H,T)$, define
\begin{equation}\label{eq:net-coefficient}
\lambda_B^{(f)}= \abs{\set{C\in\cC_f(H)\st C=B}} - \abs{\set{e\in E(T)\st B_e=B}}.
\end{equation}
Thus a clique bag has coefficient $1$, whereas a separator bag has coefficient equal to the negative of its positive multiplicity among the relevant edges of $T$.

If $G,H$ have the same vertex set then the elements of $F\sett E(H)\smallsetminus E(G)$ are called the \it{fill edges}. The covariance matrices of both these graphs have the same size. The unobserved (with respect to $G$) covariance entry corresponding to $f=uv\in F$ will be denoted by $x_{f}=\Sigma_{uv}$, or simply $x$ when $F$ is the singleton $\set{uv}$.

\begin{lemma}[Clique tree properties]\label{lem:properties}
Let $H$ be a chordal graph and $T$ a clique tree of $H$. Recall that the vertex set of $T$ is the set $\cC(H)$ of maximal cliques in $H$. The following are true.
\begin{enumerate}[label=(\roman*)]
\item \label{path-property} For $C,C'\in \cC(H)$ if $w\in C\cap C'$ then every clique on the unique path in $T$ between $C,C'$ contains $w$.
\item \label{uv-subtree} For $f= uv \in E(H)$, the maximal cliques containing $u$ and $v$ induce a nonempty subtree $T_{f}$ of $T$. Its edges are exactly the edges $e = CC'\in E(T)$ for which $\set{u,v}\subseteq B_{e} = C\cap C'$.
\item \label{sum-net-coefficients} For every $f\in E(H)$, $\sum_{B \in \cB_{f}(H,T)} \lambda_{B}^{(f)}=1$.
\end{enumerate}
\end{lemma}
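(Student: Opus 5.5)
We prove the three parts in order; each one feeds the next.

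For \ref{path-property}, we use the running intersection property in \Cref{def:clique-tree}. The cliques containing $w$ induce a connected subtree $T_w$ of $T$, and both $C$ and $C'$ lie in it. Any connected subgraph of a tree containing two vertices contains the unique path between them. Hence every clique on that path contains $w$.

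For \ref{uv-subtree}, we first check nonemptiness: $\set{u,v}$ is a clique of $H$, so it lies in some maximal clique. Next, the set of cliques containing both $u$ and $v$ is the vertex set of $T_u\cap T_v$. An intersection of two subtrees of a tree is connected whenever it is nonempty, because the path between two common vertices lies in both subtrees by \ref{path-property}. So these cliques induce a subtree $T_f$. It remains to identify its edges. An edge $e=CC'$ of $T$ lies in the induced subtree exactly when both endpoints contain $u$ and $v$, which is equivalent to $\set{u,v}\subseteq C\cap C'=B_e$.

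For \ref{sum-net-coefficients}, we sum \eqref{eq:net-coefficient} over $B\in\cB_f(H,T)$ and regroup.
\begin{itemize}
\item The first terms add up to $\abs{\cC_f(H)}$. Each clique in $\cC_f(H)$ equals exactly one bag, and no separator bag is a maximal clique (the disjointness remark), so separator bags contribute $0$ to the first terms.
\item The second terms count each edge $e\in E(T)$ with $\set{u,v}\subseteq B_e$ exactly once, under the bag $B=B_e\in\cS_f(H,T)$. Clique bags $B\in\cC_f(H)$ are never equal to some $B_e$, so they contribute nothing here. By \ref{uv-subtree}, this total is $\abs{E(T_f)}$.
\end{itemize}
The sum is therefore $\abs{V(T_f)}-\abs{E(T_f)}=1$, since $T_f$ is a nonempty tree.

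The main subtlety is the bookkeeping in \ref{sum-net-coefficients}. The sets $\cS_f$ contain distinct bags, while $\lambda$ records separator multiplicities. We must also make sure that no clique bag coincides with a separator, so that no term is double counted. The disjointness remark already handles the second point.

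For a disconnected $H$, the added cross-component edges have empty separators. These never contain $\set{u,v}$, so they do not change $T_f$, and the argument goes through unchanged.
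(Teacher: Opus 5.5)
Your proof is correct and follows essentially the same approach as the paper's: running intersection for (i), connectivity of the intersection of the subtrees for $u$ and $v$ for (ii), and the vertex-minus-edge count $\abs{V(T_f)}-\abs{E(T_f)}=1$ for (iii). Your extra bookkeeping in (iii), which uses the fact that no separator bag is a maximal clique so that nothing is double counted, makes explicit what the paper's proof leaves implicit.
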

\begin{proof}
\begin{enumerate}[label=(\roman*)]
\item By the running-intersection property, the maximal cliques containing $w$ induce a connected subgraph of the tree $T$. A connected subgraph of a tree that contains $C$ and $C'$ contains their unique connecting path.
\item The maximal cliques containing $u$, and respectively $v$, each induce a subtree of $T$. Their intersection is nonempty because the edge $uv$ lies in a maximal clique. The intersection of two nonempty intersecting subtrees of a tree is connected, so its vertex set is that of a subtree $T_f$. An edge $e=CC'$ belongs to $T_f$ precisely when both $C$ and $C'$ contain $u$ and $v$, equivalently when $B_e=C\cap C'$ contains $\set{u,v}$.
\item The sum, over $B$, of the first terms in \eqref{eq:net-coefficient} gives $\abs{V(T_f)}$. Similarly the second terms give $\abs{E(T_f)}$. Since $T_{f}$ is a connected tree and the above sum is the difference between the number of vertices and edges of $T_{f}$, the mentioned sum must be one.
\end{enumerate}
\end{proof}

For studying invariance of $\MLdeg$ under combinatorial operations, a special type of chordal graph that pops up is the complete graph $K_{n}$. \cite{SturmfelsUhler2010} successfully identified two such operations which we shall describe here briefly.

\begin{definition}[Clique sum]\label{def:clique-sum}
After relabeling if necessary, suppose $S = V(G_{1})\cap V(G_{2})$ induces a clique in both $G_{1},G_{2}$. The \it{clique sum $G_{1}\cup_{S} G_{2}$ of $G_1$ and $G_2$ along $S$} is the graph with vertices $V(G_1)\cup V(G_2)$ and edges $E(G_1)\cup E(G_2)$. 
\end{definition}

\begin{definition}[Join]\label{def:join}
The \it{join} $G_{1}*G_{2}$ of graphs $G_{1},G_{2}$ has vertices $V(G_1)\sqcup V(G_2)$ and edges $E(G_1)\cup E(G_2)\cup \set{ij\st i\in V(G_1),j\in V(G_2)}.$
\end{definition}

Intuitively, a \textit{clique sum} is a ‘gluing’ of two graphs along a common clique. One can choose different sizes or positions of this clique within each graph to produce different clique sums. Note that the clique sum along $\varnothing$ is a disjoint union. The \textit{join} is the graph obtained by taking the disjoint union of the two graphs and then adding all possible cross-edges between them. The following two established identities will be used later and also form the motivation for defining a clique blow-up in order to identify operations that help compute the $\MLdeg$.

\begin{lemma}[{\cite[Lemma~4.2]{SturmfelsUhler2010}}]\label{lem:mult-clique-sum}
$\MLdeg(G_1\cup_{K_t}G_2)=\MLdeg(G_1)\MLdeg(G_2)$.
\end{lemma}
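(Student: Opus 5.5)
The plan is to prove the clique-sum multiplicativity $\MLdeg(G_1\cup_{K_t}G_2)=\MLdeg(G_1)\MLdeg(G_2)$ by working in covariance coordinates. Write $G=G_1\cup_S G_2$ with $S\simeq K_t$. Set $V_1=V(G_1)\smallsetminus S$ and $V_2=V(G_2)\smallsetminus S$. The unknowns of the likelihood equations are the entries $\Sigma_{ij}$ for nonedges $ij$ of $G$. These split into three groups: nonedges inside $G_1$, nonedges inside $G_2$, and all cross pairs $V_1\times V_2$. The last group consists of nonedges because no edge of $G$ joins $V_1$ to $V_2$.

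The key step is to eliminate the cross variables uniquely, by a Schur complement or conditional-independence argument. The requirement $(\Sigma^{-1})_{ij}=0$ for all $i\in V_1$, $j\in V_2$ says that $K=\Sigma^{-1}$ has a zero $V_1\times V_2$ block. This is equivalent to $V_1\perp V_2\mid S$, that is,
\[
\Sigma_{V_1V_2}=\Sigma_{V_1S}\Sigma_{SS}^{-1}\Sigma_{SV_2}.
\]
This gives a unique rational expression for the cross variables in terms of the blocks $\Sigma_{V(G_1)}$ and $\Sigma_{V(G_2)}$, valid whenever $\det\Sigma_{SS}\neq0$. Since $S$ is a clique, $\Sigma_{SS}$ is fully observed, so this holds generically.

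With this substitution, $K$ is the standard gluing
\[
K=\lp\Sigma_{V(G_1)}^{-1}\rp^{0}+\lp\Sigma_{V(G_2)}^{-1}\rp^{0}-\lp\Sigma_{SS}^{-1}\rp^{0},
\]
which one checks by block inversion, as in the two-clique case of Lauritzen's formula. The correction term $(\Sigma_{SS}^{-1})^0$ is supported on $S\times S$, and $S\times S$ consists entirely of edges. So the remaining constraints $K_{ij}=0$ for nonedges inside $G_1$ become exactly $(\Sigma_{V(G_1)}^{-1})_{ij}=0$. This is the likelihood system of $G_1$ with data $S|_{\cO_{G_1}}$. The constraints for nonedges inside $G_2$ likewise become the system of $G_2$.

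The two systems share no variables: the shared data on $S$ are observed constants. The solution set is therefore the Cartesian product of the two solution sets. For generic data, each factor receives generic data, since $\cO_G=\cO_{G_1}\cup\cO_{G_2}$ with overlap exactly $\cO_S$. The count is then $\MLdeg(G_1)\MLdeg(G_2)$.

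I expect the main obstacle to be the bookkeeping of degeneracy loci and multiplicities. Three things must be checked:
\begin{enumerate}[label=(\alph*)]
\item the saturations by $\det\Sigma$ agree with saturating by $\det\Sigma_{V(G_1)}\det\Sigma_{V(G_2)}$. This uses $\det\Sigma=\det\Sigma_{V(G_1)}\det\Sigma_{V(G_2)}/\det\Sigma_{SS}$ on the locus where the cross block has the conditional form;
\item every critical point of the $G$-system really has cross block of that form, so the elimination loses nothing. This needs $\det\Sigma_{SS}\neq0$ generically;
\item multiplicities multiply. This follows because the local algebra at a product point is a tensor product, or more simply because generic solutions are reduced in each factor.
\end{enumerate}
For the genericity claim in (b), I would argue that a Zariski-open set of joint data restricts to generic data on each side, since the two projections are dominant and the data on $S$ enter both factors only through their common values.
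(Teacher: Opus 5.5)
The paper gives no proof of this lemma; it quotes it from Sturmfels--Uhler, so there is no in-paper argument to compare against. Your proof is correct. It is essentially the argument the paper's own tools would produce. Eliminating the $V_1\times V_2$ block is the uniqueness clause of \Cref{lem:clique-gluing-identity}, your check (a) is \eqref{eq:glued-determinant}, and splitting into two variable-disjoint systems over a common data field is the tensor-product mechanism of \Cref{lem:component-factorization}.

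You can also settle (b) and (c) algebraically instead of pointwise. Since $S$ is a clique, $\det\Sigma_{SS}\in\K_G^\times$. Work in $\K_G[z_{\cE_G}][(\det\Sigma)^{-1}]$ and write $U=V_1\cup V_2$, $M$ for the Schur complement of $\Sigma_{SS}$, and $P=M^{-1}=(\Sigma^{-1})_{UU}$. The block identities of $MP=PM=I$ show that the $V_1\times V_2$ entries of $P$ and of $M$ generate the same ideal. The latter ideal is generated by the entries of $\Sigma_{V_1V_2}-\Sigma_{V_1S}\Sigma_{SS}^{-1}\Sigma_{SV_2}$. Quotienting by it therefore just substitutes the cross block, and you get $\mathcal A_G\cong(\mathcal A_{G_1}\otimes_{\K_{G_1}}\K_G)\otimes_{\K_G}(\mathcal A_{G_2}\otimes_{\K_{G_2}}\K_G)$. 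Then \Cref{lem:extension-of-scalars} gives the product of dimensions directly. This removes any need to argue that generic joint data restrict to generic data on each side, or that local multiplicities multiply.
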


\begin{theorem}[{\cite[Theorem~4.11]{SturmfelsUhler2010}}]\label{thm:suspension}
$\MLdeg(G*K_t)=\MLdeg(G)$.
\end{theorem}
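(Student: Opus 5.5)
We work in covariance coordinates and reduce the likelihood equations of $G*K_t$ to those of $G$ by a Schur complement. The nonedges of $G*K_t$ are exactly the nonedges of $G$, because the clique $Q\simeq K_t$ is joined to everything. So the unknowns in the system \eqref{eq:likelihoodequations} for $G*K_t$ are the entries $x_{ij}=\Sigma_{ij}$ with $ij\notin\cO_G$, $i,j\in V(G)$. Every entry of $\Sigma_{GQ}$ and $\Sigma_{QQ}$ is observed data.

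Write
\[\Sigma=\begin{bmatrix}\Sigma_{GG}&\Sigma_{GQ}\\ \Sigma_{QG}&\Sigma_{QQ}\end{bmatrix}.\]
For generic data, $\Sigma_{QQ}$ is a generic, hence invertible, complex symmetric matrix. Block inversion then gives
\[(\Sigma^{-1})_{GG}=(\Sigma')^{-1},\qquad \Sigma'\sett\Sigma_{GG}-\Sigma_{GQ}\Sigma_{QQ}^{-1}\Sigma_{QG}.\]
The constraints $(\Sigma^{-1})_{ij}=0$ for nonedges $ij$ all live in the $GG$ block. They therefore say exactly that $(\Sigma')^{-1}$ vanishes on the nonedges of $G$.

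Now set $c\sett\Sigma_{GQ}\Sigma_{QQ}^{-1}\Sigma_{QG}$, which is a polynomial in the observed data only. The matrix $\Sigma'$ is then $\Sigma_{GG}$ shifted entrywise by $-c$.
\begin{itemize}
\item On positions of $\cO_G$, its entries are new data $s'_{ij}=s_{ij}-c_{ij}$.
\item On nonedges, its entries are $x'_{ij}=x_{ij}-c_{ij}$, a translation of the unknowns.
\end{itemize}
Hence the system for $G*K_t$ with data $S$ is carried, by an affine change of variables, onto the system for $G$ with data $S'$. The determinants also match: $\det\Sigma=\det\Sigma_{QQ}\cdot\det\Sigma'$ with $\det\Sigma_{QQ}\neq0$ a nonzero constant, so saturating by $\det\Sigma$ is the same as saturating by $\det\Sigma'$. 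Since the change of variables is an isomorphism, the two quotient algebras are isomorphic, and solution counts with multiplicities agree.

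The remaining point, and the only real obstacle, is genericity: we need generic $S$ for $G*K_t$ to yield generic $S'$ for $G$. Consider the map $S\mapsto(S',\Sigma_{GQ},\Sigma_{QQ})$ on the open set where $\det\Sigma_{QQ}\neq0$. It is a polynomial automorphism there, with inverse $s_{ij}=s'_{ij}+c_{ij}$. So the image of a dense open set is dense open. In particular, the bad locus for $G$, where the count differs from $\MLdeg(G)$, pulls back to a proper closed subset. Outside it we get $\MLdeg(G*K_t)=\MLdeg(G)$.
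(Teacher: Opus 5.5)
Your argument is correct. The paper gives no proof of this statement: it is quoted from Sturmfels--Uhler and used as a black box (in \Cref{cor:book} and \Cref{prop:universal-blow-up}), so your proposal is a self-contained proof rather than a retracing of the paper's.

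Your route has three steps. You take the Schur complement of the fully observed block $\Sigma_{QQ}$. You note that every nonedge constraint lies in the $V(G)\times V(G)$ block, where $(\Sigma^{-1})_{GG}=(\Sigma')^{-1}$. You then absorb $c=\Sigma_{GQ}\Sigma_{QQ}^{-1}\Sigma_{QG}$ into a simultaneous translation of data and unknowns. This uses only the Schur-complement identities the paper already develops (\Cref{lem:local-schur}, \Cref{lem:clique-gluing-identity}), and statistically it is just conditioning on $X_Q$. It also shows why $Q$ must be a clique: a nonedge inside $Q$ would constrain $(\Sigma^{-1})_{QQ}$, which involves the unknowns of $\Sigma_{GG}$. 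One small correction: $c$ is a rational function of the data, regular where $\det\Sigma_{QQ}\neq0$, not a polynomial. Your automorphism claim is correct on that open set.

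You can also phrase the genericity step in the paper's algebraic language instead of via bad loci. Since $s=s'+c$, the elements $s'_{ij}$ ($ij\in\cO_G$) together with the entries of $\Sigma_{GQ},\Sigma_{QQ}$ generate $\K_{G*K_t}$ over $\C$. They are equal in number to its transcendence degree, hence algebraically independent. So $s_{ij}\mapsto s'_{ij}$ is a field embedding $\K_G\hookrightarrow\K_{G*K_t}$. Your translation then identifies the localized likelihood algebra of $G*K_t$ with the base change of $\mathcal A_G$ along this embedding, and \Cref{lem:extension-of-scalars} gives equal dimensions. This is the same mechanism as in the proof of \Cref{thm:degree-two-blow-up}, with the effective data $\tilde c,\tilde D(\tilde c)$ there replaced by $s'$ here.
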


\subsection{Usefulness of the formulas}

The organization principle for the first part of the paper is chordal completion. If $H$ is chordal and $G=H\smallsetminus uv$ where $uv\in E(H)$, then the likelihood equations of $G$ correspond to the solutions of the rational equation $K_{uv}^{H}(x) = 0$ where $x=\Sigma_{uv}$. This is proven in \Cref{lem:one-fill-reduction}.

Another central example in the first part of the paper will be the example of a \dsb. This is quite special because of the observation that `blowing up' the vertices on the $n-$side of $K_{2,n}$ does not change the structure of the equations, resulting in the same $\MLdeg$. This example also motivates the question about invariance of $\MLdeg$ under combinatorial operations on the initial graph.

\begin{definition}[Deleted-spine clique book (\dsb)]\label{def:book}
Let $n\geq1$, let $u,v$ be distinguished vertices, and let $A_{1},\cdots,A_{n}$ be pairwise disjoint nonempty sets disjoint from $\set{u,v}$. The \it{deleted-spine clique book} $G=\operatorname{Book}(A_{1},\cdots,A_{n})$, or \dsb~in short, has
\[
V(G)=\set{u,v}\sqcup\bigsqcup_{i=1}^{n} A_{i}, \qquad E(G)= \lp\bigsqcup_{i=1}^n \binom{A_{i}}{2}\rp \sqcup\set{xa\st x\in \set{u,v}, a\in \bigsqcup_{i=1}^{n} A_{i}}.
\]
The $A_{i}$ are called its \it{pages}, and the nonedge $uv$ is called its \it{deleted spine}.
\end{definition}

Looking at the structure of \dsb s, it is simply replacing each vertex on the larger side of $K_{2,n}$ with cliques which are essentially copies of the replaced vertex. The neighbor of each $i$ cannot distinguish between $i$ and each vertex in $A_{i}$, behaving like `twins'. 

For $n\geq2$, adding $uv$ to $\operatorname{Book}(A_{1},\cdots,A_{n})$ produces a chordal graph with maximal cliques $\set{u,v}\cup A_{i}$. Hence \Cref{cor:book} will give \[\MLdeg(\operatorname{Book}(A_{1},\ldots,A_{n}))=2n+1.\]
When $n=1$, the book is the clique sum of the complete graphs on $A_{1}\cup\set{u}$ and $A_{1}\cup\set{v}$ along $A_{1}$, and therefore has $\MLdeg = 1$. 

\subsection{Two perspectives on \dsb s}

\dsb s have two complementary descriptions which have already been hinted at. On one hand, they are one-fill chordal graphs; while on the other, they are clique blow-ups of $K_{2,n}$. The first description allows us to compute their $\MLdeg$ using \Cref{thm:one-fill}. The latter perspective also enables us to compute their $\MLdeg$ via the invariance of ML degree under clique blow-ups studied later in \Cref{sec:blow-up}. The first perspective along with its calculation of the $\MLdeg$ and the fact that \dsb s are clique blow-ups of $K_{2,n}$ motivates the following question.

\begin{question}\label{qs:clique-blow-up}
Is $\MLdeg(\bl_w^q(G))=\MLdeg(G)$ for every graph $G$, $w\in V(G)$, and $q\geq1$?
\end{question}

The clique condition (for blow-up) is essential in the question. Replacing a vertex by the graph with $V=\set{1,2}, E=\varnothing$ can change the degree. Indeed, doing this to a vertex in the part of size two of $K_{2,3}$ gives $K_{3,3}$, whereas $\MLdeg(K_{2,3})=7$ and $\MLdeg(K_{3,3})=57$ by \cite{Uhler_mle_ggm}. `Blowing up' by a chordal graph is also not enough and can change the degree. Indeed, `blowing up' the middle vertex of $P_3$ by $P_n$, for $n\geq3$, gives $E_2*P_n$, where $E_2$ is the edgeless graph on two vertices, and \Cref{thm:one-fill} gives $\MLdeg(E_2*P_n) = 4n-7 \neq1 = \MLdeg(P_3)$.

There are some positive answers to the above question. Chordal graphs remain chordal under clique blow-up. \Cref{prop:onefill-blow-up,thm:degree-two-blow-up} and the \ref{eq:off} prove invariance in the cases needed for \dsb s. On the other hand, because $G$ is an induced subgraph of $\bl_w^q(G)$, the result on induced-subgraph monotonicity from \cite{AmendolaDinuMichalekVodicka2024} gives $\MLdeg(\bl_w^q(G))\geq\MLdeg(G)$. Thus any counterexample must strictly increase the degree. \Cref{ex:double-blow-up} provides one constructed from $C_{4}$. The next natural problem is to identify the vertices for which blow-up invariance holds.

\begin{question}\label{qs:special-vertex}
Which vertices $w\in V(G)$ satisfy $\MLdeg(\bl_w^q(G))=\MLdeg(G) \fa q\geq1$?
\end{question}

\section{Gaussian graphical likelihood equations}\label{sec:equations}

Let $\cO_{G} \sett \set{ii \st i\in V(G)} \sqcup E(G)$ be the set of observed positions for a graph $G$. Consider \[\mathcal L_{G} \sett \set{K \in \C^{V(G)\times V(G)} \st K=K^{\top}, K_{ij}=0\fa ij\notin \cO_{G}}.\]
The open Gaussian graphical model, in covariance coordinates, is
\[\cM_{G}^{o} = \set{\Sigma \in \C^{V(G)\times V(G)} \st \det \Sigma\ne 0, \Sigma^{-1}\in \mathcal L_{G}}.\]
Thus, $\cM_{G}^{o}$ is irreducible and has dimension $\abs{\cO_{G}}$. For sufficient statistics $S = (s_{ij})_{ij\in \cO_{G}}$, the likelihood equations are \begin{align}
(\Sigma^{-1})_{ij}&=0 &\fa ij\notin \mathcal O_{G},\label{eq:zero}\\
\Sigma_{ij}&=s_{ij} &\fa ij\in\mathcal O_G.\label{eq:observed}
\end{align}

Equivalently, they form the fiber over $S$ of the projection \[
\pi_{G} : \cM_{G}^{o} \to \C^{\cO_{G}}, \qquad \Sigma \mapsto (\Sigma_{ij})_{ij\in \cO_{G}}.\]
By \Cref{lem:likelihood-projection-generically-finite} the projection  $\pi_{G}$ is dominant\footnote{recall that a morphism is \it{dominant} if its image is dense} and generically finite, and its source and target have the same dimension. The ML degree is the length of the generic fiber of $\pi_{G}$. Equivalently, if we keep the generic data symbolic and say $\K_{G} \sett \C(s_{ij} : ij\in \cO_{G})$ then the ML degree is the dimension of the generic fiber's coordinate algebra over $\K_{G}$. More precisely, let $\cE_{G} \sett \cO_{G}^{c}$ be the set of non-edges of $G$, introduce variables $z=(z_{ij})_{ij\in \cE_{G}}$ and write $\Sigma_{G}(s,z)$ for the $V(G)\times V(G)$ matrix whose entries on $\cO_{G}$ are $s$ and those on $\cE_{G}$ are $z$. Then \[\MLdeg(G) = \dim_{\K_{G}}\left(\frac{\K_{G}[z_{\cE_{G}}]}{(\cof_{ij}(\Sigma_{G}(s,z)):ij\in \cE_{G}):(\det \Sigma_{G}(s,z))^{\infty}}\right).\]
The above makes sense because $(\Sigma^{-1})_{ij} = 0 \iff \cof_{ij}(\Sigma)=0$ away from $\det \Sigma=0$. This is why we quotient $R_{G}\sett \K_{G}[z_{\cE_{G}}]$ by the ideal $I_{G} \sett (\cof_{\cE_{G}}(\Sigma_{G}(s,z))):(\det \Sigma_{G}(s,z))^{\infty}$. The following application of \Cref{lem:determinant-saturation-localization} gives an alternative description of the likelihood equations.
\begin{equation}
\label{eq:likelihoodequations}
\mathcal A_{G} \sett \frac{R_{G}}{I_{G}} = \frac{\K_{G}[z_{\cE_{G}}]}{\lp \cof_{\cE_{G}}(\Sigma_G)\rp : (\det\Sigma_G)^\infty} \stackrel{\Cref{lem:determinant-saturation-localization}}{\cong} \frac{\K_{G}[z_{\cE_{G}}][(\det \Sigma_{G})^{-1}]}{\lp \cof_{\cE_{G}}(\Sigma_G)\rp }.\end{equation}

We now recall the chordal formula from \cite[Proposition~4.5]{lauritzenlecture} and \cite[Chapter~5]{Lauritzen1996}. Intuitively it says that there is only one matrix completion when the observed data are generic for the Gaussian graphical model on a chordal graph. Further, the unique completion of the concentration matrix $K$ and its determinant is a formula in terms of a clique tree of the chordal graph. Formally, let $H$ be chordal with observations $y = (y_{ij})_{ij\in \cO_{H}}$. Consider a clique tree $T$ of $H$. For generic $y$, there is a unique completion to a non-singular matrix $K = K^{H}(y)\in\mathcal L_H$ such that $(K^{-1})_{ij} = y_{ij} \fa ij\in \cO_{H}$ and is given by the formula \begin{equation}\label{eq:chordal-formula}
 K^H(y) = \sum_{C\in\cC(H)}(y_C^{-1}) ^{0} - \sum_{e\in E(T)}(y_{B_e}^{-1})^{0}.
\end{equation}

\begin{proposition}[Chordal clique-separator formula]\label{prop:chordal-formula}
In the above setting, suppose that $y_{C}$ and $y_{B_e}$ are invertible for every $C\in\cC(H)$ and $e\in E(T)$ then there is a  unique non-singular matrix $K \in\mathcal L_H$ such that $(K^{-1})_{ij} = y_{ij}\fa ij\in \cO_{H}$. It is given by \eqref{eq:chordal-formula}. In addition,
\begin{equation}\label{eq:chordal-det-factorization}
\det K^H(y)= \frac{\prod_{e\in E(T)}\det y_{B_e}}{\prod_{C\in\cC(H)}\det y_C}.
\end{equation}
\end{proposition}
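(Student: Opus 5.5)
The proof will go by induction on the number of maximal cliques of $H$, using a leaf of the clique tree $T$ and a Schur complement argument. If $|\cC(H)|=1$, then $H$ is complete and $T$ has no edges. In that case $K_C=y_C^{-1}$ with $C=V(H)$, so \eqref{eq:chordal-formula} and \eqref{eq:chordal-det-factorization} hold trivially, and uniqueness follows because all entries of $K^{-1}$ are prescribed. For the inductive step, pick a leaf $C_0$ of $T$ with neighbor $C_1$, separator $S=C_0\cap C_1$, and residual set $R=C_0\smallsetminus S$. By the running intersection property (\Cref{lem:properties}\ref{path-property}), no vertex of $R$ lies in any other maximal clique. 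Thus the vertices of $R$ are adjacent only to $C_0$, and $S$ separates $R$ from $W\sett V(H)\smallsetminus C_0$. Let $H'=H\smallsetminus R$. It is chordal, its clique tree is $T\smallsetminus C_0$ (or the same tree with $C_0$ merged into $C_1$ if $C_1=S$; that degenerate case cannot occur, since $C_1$ is maximal and $S\subsetneq C_1$). The induction hypothesis therefore gives a unique $K'=K^{H'}(y|_{H'})$.

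For existence, I will verify directly that the matrix $K$ in \eqref{eq:chordal-formula} satisfies $K=\lp y_{C_0}^{-1}\rp^{0}-\lp y_S^{-1}\rp^{0}+(K')^{0}$. I will then exhibit its inverse $\Sigma$. This is the matrix agreeing with $\Sigma'=(K')^{-1}$ on $V(H')$ and with $y$ on $C_0$, with entries $\Sigma_{RW}=y_{RS}\,y_S^{-1}\,\Sigma'_{SW}$ chosen so that the conditional independence of $R$ and $W$ given $S$ holds. The check $K\Sigma=I$ is a block computation with three blocks $R,S,W$, using $K'\Sigma'=I$ and the identities $y_S^{-1}y_{S S}=I$ and $y_{C_0}^{-1}y_{C_0}=I$. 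Since $\Sigma$ agrees with $y$ on every observed position of $H$, this settles existence. The determinant formula comes from the block factorization
\[
\det \Sigma=\det\Sigma'\cdot\det\lp y_{RR}-y_{RS}y_S^{-1}y_{SR}\rp=\det\Sigma'\cdot\frac{\det y_{C_0}}{\det y_S},
\]
where the second equality is the Schur complement identity inside $y_{C_0}$. Inverting, and combining with the inductive formula for $\det K'$, gives \eqref{eq:chordal-det-factorization}.

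For uniqueness, let $\tilde K\in\mathcal L_H$ be any nonsingular matrix with $(\tilde K^{-1})_{ij}=y_{ij}$ on $\cO_H$. Since $\tilde K_{RW}=0$, the inverse $\tilde\Sigma$ satisfies the conditional independence relation $\tilde\Sigma_{RW}=\tilde\Sigma_{RS}\tilde\Sigma_{SS}^{-1}\tilde\Sigma_{SW}$. Moreover, the Schur complement $\tilde K/\tilde K_{RR}$, which is the inverse of $\tilde\Sigma_{V(H')}$, lies in $\mathcal L_{H'}$. This holds because eliminating $R$ only modifies entries within $S\times S$, and $S$ is a clique. By the induction hypothesis, $\tilde\Sigma_{V(H')}=\Sigma'$. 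The remaining unobserved entries are $\tilde\Sigma_{RW}$, and these are forced by the relation above. Hence $\tilde\Sigma=\Sigma$.

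I expect the main obstacle to be the bookkeeping in the Schur-complement step. Specifically, I must show that $\tilde K/\tilde K_{RR}$ has the zero pattern of $H'$, and that its inverse is exactly the principal submatrix $\tilde\Sigma_{V(H')}$. I also need to track the genericity hypotheses: invertibility of $y_S$ and of $y_{C_0}$ is exactly what the statement assumes, and it passes to $H'$ because the cliques and separators of $H'$ are a subset of those of $H$. I would handle this first, by writing out the $3\times3$ block structure explicitly.
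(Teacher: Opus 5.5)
Your proposal is correct and follows essentially the same route as the paper: induction on $\abs{\cC(H)}$ by removing a leaf clique, with existence and the determinant obtained by gluing $y_{C_0}$ and $\Sigma'$ along $y_S$ with the cross block $y_{RS}y_S^{-1}\Sigma'_{SW}$. The paper packages your block computation as \Cref{lem:clique-gluing-identity}, and for uniqueness it also forces $\tilde\Sigma_{RW}$ and then applies the induction hypothesis to $H'$; your use of $\tilde K/\tilde K_{RR}$ in place of the paper's second application of the gluing identity is only a minor variant, though you should state explicitly that $\tilde K_{RR}$ is invertible over $\C$. This holds because $\tilde\Sigma/y_S$ is invertible with block-diagonal inverse $\tilde K_{R\cup W}$, the same observation that gives your conditional-independence relation.
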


We make a minor observation about \eqref{eq:chordal-formula} which will be useful later. If $B \in \cC(H) \cup \set{B_{e}\st e\in E(T)}$ and $f=uv \in E(H)$, then $\lp(y_{B}^{-1})^{0}\rp_{f} = 0$ unless both endpoints of $f$ are contained in $B$. This is because of the embedding of $y_{B}^{-1}$, which is a matrix on $B\times B$, into the $V(H)\times V(H)$ matrix by padding with zeros. Therefore,
\begin{equation}\label{obs:padding}
\lp(y_{B}^{-1})^{0}\rp_{f} = \begin{cases}
\lp y_{B}^{-1}\rp_{f} & \text{ if } \set{u,v}\subseteq B\\
0& \text{ otherwise}. 
\end{cases}
\end{equation}

Now we move on to the setting of a graph's completion to a chordal graph. Let $H$ be chordal and $F\subseteq E(H)$. Take $G = H \smallsetminus F$. The edges in $F$ are the fill edges for $G$ to reach a chordal completion $H$. Clearly $\cO_{H} = \cO_{G} \sqcup F$. Note that this may not be the minimal chordal completion. We analyze the likelihood algebra of $G$ using $H$. The motivation for such an idea is as follows. The covariance matrix of $H$ looks like the covariance matrix of $G$ on $\cO_{G}$, and the inverse of the covariance matrix of $G$ should have a zero entry corresponding to $\cE_{H}$ and also on $F$. Recall that the likelihood algebra $\mathcal A_{G}$ of $G$ is cut out by the zero entry equations corresponding to $\cE_{G}$. So one should be able to figure out the ML degree of $G$ by first taking generic data on $F$ and $\cO_{G}$, which should give a unique concentration matrix completion for $H$, and then imposing the extra equations that the $f-$entries of this concentration matrix are all zero for $f\in F$.

Mathematically, we mean the following. Work over the generic data field $\K_{G}$. Introduce a variable $x_{f}$ for each $f\in F$. Let $y(x_{F})$ be the partial matrix on $H$ whose entries corresponding to $\cO_{G}$ are $s_{ij}$ and those corresponding to $f\in F$ are $x_{f}$. In other words, 
\[
y(x_{F})_{ij} = \begin{cases}
s_{ij} & \text{ if } ij\in \cO_{G}\\
x_{ij} & \text{ if } ij\in F
\end{cases}.
\]
Fix a clique tree $T$ of $H$ and let \[\Delta_{H}(x_{F}) = \prod_{C\in \cC(H)} \det y_{C}(x_{F})\cdot \prod_{e\in E(T)} \det y_{B_{e}}(x_{F}).\]
This gives a symbolic matrix for sufficient statistics on $H$. Then for generic values of these symbols, one can complete it to an invertible concentration matrix $K^{H}(x_{F})$, by \Cref{eq:chordal-formula}, for the Gaussian graphical model on $H$ whose inverse agrees with $(s_{ij})_{ij\in \cO_{G}}\cup(x_{ij})_{ij\in F}$ on $\cO_{H}$. We are interested in the algebra \begin{equation}
A_{H,F} \sett R_{\Delta_{H}}/\lp K^{H}_{f} : f\in F\rp.\label{eq:fill-algebra} 
\end{equation}
where $R=\K_{G}[x_{f}:f\in F]$. We will show that this is isomorphic to the likelihood algebra for $G$ over $\K_{G}$, whence $\MLdeg(G) = \dim_{\K_{G}}A_{H,F}$.

\begin{lemma}[Algebraic fill reduction]\label{lem:algebraic-fill-reduction}
Let $H, F, G, x_{F}, \Delta_{H}, K^{H}(x), R, A_{H,F}$ be as above. Then there is a $\K_{G}-$algebra isomorphism $A_{H,F}\cong \mathcal A_{G}$. 
\end{lemma}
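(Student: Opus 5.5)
The plan is to compare both algebras inside a common localization of the polynomial ring in all nonedge variables of $G$. Since $\cE_G=\cE_H\sqcup F$, write the variables $z_{\cE_G}$ as $(x_F,w)$, where $x_f=z_f$ for $f\in F$ and $w=(z_{ij})_{ij\in\cE_H}$. Then $\Sigma_G(s,z)$ restricted to $\cO_H$ is exactly the partial matrix $y(x_F)$. In particular $\Delta_H(x_F)$ is a product of principal minors of $\Sigma_G$, and these minors involve no $w$-variables. The isomorphism will be $x_f\mapsto z_f$ in one direction and, in the other, $w\mapsto$ the off-pattern entries of $(K^H(x_F))^{-1}$.

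\emph{Step 1: $\Delta_H$ is a unit in $\mathcal A_G$.} By \Cref{lem:likelihood-projection-generically-finite}, $\mathcal A_G$ is a finite-dimensional $\K_G$-algebra, hence Artinian. Its maximal ideals correspond to the points of the generic fiber of $\pi_G$ over $\overline{\K_G}$. The locus $Z\subset\cM_G^o$ where some $\det\Sigma_C$ or $\det\Sigma_{B_e}$ vanishes is a proper closed subset: it misses the identity matrix. Since $\cM_G^o$ is irreducible of dimension $\abs{\cO_G}$, we have $\dim Z<\abs{\cO_G}$, so $\pi_G(Z)$ is not dense and the generic fiber avoids $Z$. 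Thus $\Delta_H$ lies in no maximal ideal of $\mathcal A_G$, so it is invertible there. Hence $\mathcal A_G\cong \K_G[x_F,w][(\det\Sigma_G\cdot\Delta_H)^{-1}]/(\cof_{\cE_G}(\Sigma_G))$, and the $\K_G$-algebra map $\varphi\colon R_{\Delta_H}\to\mathcal A_G$, $x_f\mapsto z_f$, is well defined.

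\emph{Step 2: relations.} In $\mathcal A_G$ set $K=\Sigma_G^{-1}$, which makes sense because $\det\Sigma_G$ is inverted. The equations $\cof_{ij}=0$ for $ij\in\cE_G$ say $K\in\mathcal L_G\subseteq\mathcal L_H$, and $K^{-1}$ agrees with $y(x_F)$ on $\cO_H$. The uniqueness in \Cref{prop:chordal-formula} is an algebraic identity, valid once the minors in $\Delta_H$ are invertible, which Step 1 guarantees. It therefore forces $K=K^H(x_F)$ in $\mathcal A_G\otimes$. To make this rigorous I would redo Lauritzen's leaf-clique Schur-complement induction over an arbitrary commutative ring in which the minors of $\Delta_H$ are units. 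Consequently $\varphi(K^H_f)=K_f=0$ for every $f\in F$, so $\varphi$ descends to $\bar\varphi\colon A_{H,F}\to\mathcal A_G$.

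\emph{Step 3: inverse map.} Over $R_{\Delta_H}$, the matrix $K^H(x_F)$ is defined and $\det K^H(x_F)$ is a unit by \eqref{eq:chordal-det-factorization}. Let $\Sigma^H(x_F)=(K^H)^{-1}$; its entries on $\cO_H$ equal $y(x_F)$. Define $\psi\colon \K_G[x_F,w]\to A_{H,F}$ by $x_f\mapsto x_f$ and $w_{ij}\mapsto \Sigma^H(x_F)_{ij}$. Under $\psi$, the matrix $\Sigma_G$ maps to $\Sigma^H$, so $\det\Sigma_G$ and $\Delta_H$ map to units. Moreover $\Sigma_G^{-1}$ maps to $K^H$, whose entries vanish on $\cE_H$ identically and on $F$ modulo $(K^H_f)$. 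Hence every $\cof_{ij}$ with $ij\in\cE_G$ maps to $0$, and $\psi$ descends to $\bar\psi\colon\mathcal A_G\to A_{H,F}$. The composite $\bar\psi\bar\varphi$ is the identity on the $x_f$, so $\bar\psi\bar\varphi=\mathrm{id}$. For $\bar\varphi\bar\psi$, Step 2 gives $\Sigma_G=(K^H(x_F))^{-1}=\Sigma^H(x_F)$ in $\mathcal A_G$, so $w_{ij}=\bar\varphi\bar\psi(w_{ij})$, and $\bar\varphi\bar\psi=\mathrm{id}$.

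The main obstacle is Step 2: justifying that the chordal completion formula and its uniqueness hold over the possibly non-reduced ring $\mathcal A_G$, and not only at generic numeric points. If the leaf-clique induction proves awkward, the fallback is to verify the polynomial identity $\Sigma_G\cdot K^H(x_F)=I$ modulo $(\cof_{\cE_G})$ after inverting $\det\Sigma_G\cdot\Delta_H$, by peeling simplicial vertices one leaf clique at a time.
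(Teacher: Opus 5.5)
Your proposal is correct and is essentially the paper's proof. It uses the same pair of maps ($x_f\mapsto x_f$ in one direction, nonedge variables $\mapsto$ entries of $K^H(x_F)^{-1}$ in the other), uniqueness of the chordal completion to make the composites the identity, and the fact that $\Delta_H$ is already a unit in $\mathcal A_G$. The paper proves that last fact as \Cref{lem:Delta-unit}, applies it at the end rather than first, and argues that the generic-fiber algebra is a domain finite over $\K_G$, hence a field, where you use a dimension count; both are valid.

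Your Step~2 worry is resolved exactly as you suggest: the appendix proof of \Cref{prop:chordal-formula} is pure Schur-complement algebra, valid over any commutative ring in which the relevant determinants are units. In any case $\mathcal A_G$ is a field, so non-reducedness never occurs.
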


\begin{proof}
Let $Q = \cE_{H}$. Introduce variables $z_{Q}$ on the non-edges $Q$ of $H$. Let $\Sigma(s,x,z)$ be the symmetric matrix on $V(H)=V(G)$ with entries $s_{ij}$ for $ij\in \cO_{G}$, $x_{f}$ for $f\in F$ and $z_{q}$ for $q\in Q$. Take $d=d(s,x,z)=\det \Sigma(s,x,z)$. First consider the localized likelihood algebra \[B\sett \frac{\K_{G}[x_{F}, z_{Q}, (d\Delta_{H})^{-1}]}{(\cof_{ij}(\Sigma) : ij \in \cE_{G})}.\]
By \eqref{eq:likelihoodequations}, $B\cong(\mathcal A_G)_{\Delta_H}$. Write $A=A_{H,F}$ for this proof. We first construct an explicit $\K_G$-algebra isomorphism $A \cong B$. Since $d\Delta_H$ is a unit in $B$, both $d$ and $\Delta_H$ are units in $B$. Thus the defining cofactor relations in $B$ are equivalent to the equations $(\Sigma^{-1})_{ij}=0 \fa ij\in F\cup Q=\cE_G$.

Consider the morphisms $\phi,\psi$ given by \begin{align*}
\phi:B\to A, &\qquad x_{f} \mapsto x_{f}, \qquad  z_{q} \mapsto \Sigma^{H}(x)_{q};\\
\psi:A\to B, &\qquad x_{f}\mapsto x_{f};
\end{align*}
where $\Sigma^{H}(x) = K^{H}(x)^{-1}$. By \eqref{eq:chordal-formula}, $K^H(x)$ has entries in $R_{\Delta_{H}}=\K_{G}\lb\set{x_{f}:f\in F}, \Delta_{H}^{-1}\rb$. Every clique and separator determinant is a unit in $R_{\Delta_{H}}$, so \eqref{eq:chordal-det-factorization} implies $\det K^{H}(x)$ is also a unit. Consequently, $\Sigma^{H}(x)$ has entries in $R_{\Delta_{H}}$ and its determinant is also a unit in $R_{\Delta_{H}}$.


To verify that these maps respect the corresponding relations, note that in $A$ the matrix $\Sigma^{H}(x) = K^{H}(x)^{-1}$ has the entries $y(x)$ on $\cO_{H}$ (namely $s_{ij}$ and $x_{F}$) and $K^{H}(x)$ has zero entries on $Q$. Further, $\det \Sigma^{H}$ and all factors of $\Delta_{H}$ are units in $A$ by \eqref{eq:chordal-det-factorization}. Therefore $\phi$ respects all relations and localizations. Conversely in $B$, $\Sigma(s,x,z)$ is invertible, $\Sigma(s,x,z)^{-1}$ has zero entries on $Q$ and $\Sigma(s,x,z)$ agrees with $(s_{ij})_{\cO_{G}}\cup (x_{f})_{F}$ on $\cO_{H}$. $\Delta_{H}$ remains a unit. The unique completion to a concentration matrix, by the chordal formula, gives $\Sigma(s,x,z)^{-1} = K^{H}(x)$. The defining relations of $B$ (from \eqref{eq:fill-algebra}) gives $\Sigma(s,x,z)^{-1}_{f} = 0\fa f\in F$. Therefore $\psi$ also respects all relations and localizations. Therefore both $\phi,\psi$ define morphisms.

We now verify that this is an isomorphism. $\phi\psi$ fixes every $x_f$. $\psi\phi$ fixes every $x_{f}$ and by the above discussion, maps $z_{q}$ to itself (because $\Sigma(s,x,z) = \Sigma^{H}(x)$ and chordal graphs have only one completion). This proves the isomorphism $A \cong B$.

Now by \eqref{eq:likelihoodequations}, $B \cong (\mathcal A_{G})_{\Delta_{H}}$. By \Cref{lem:Delta-unit}, the image of $\Delta_H$ is already a unit in $\mathcal A_{G}$, whence $A\cong B \cong (\mathcal A_{G})_{\Delta_{H}} \cong \mathcal A_{G}$.
\end{proof}

\begin{corollary}
For each $f$, choose $N_f\geq0$ such that $p_f=\Delta_H^{N_f}K^H_f\in R$. Then
\begin{equation}\label{eq:saturated-fill-ideal}
 I_{H,F}\sett (p_f:f\in F):\Delta_H^\infty
\end{equation}
is independent of the chosen exponents $N_f$ and $A_{H,F}\simeq R/I_{H,F}$.
\end{corollary}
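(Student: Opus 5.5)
The plan is to pass through the standard identification of a localization of a polynomial ring with a quotient of the saturation. First I check that $p_f$ exists. By \eqref{eq:chordal-formula}, each entry $K^H_f$ is a signed sum of entries of $y_C^{-1}$ and $y_{B_e}^{-1}$. Each such entry equals a cofactor divided by $\det y_C$ or $\det y_{B_e}$. So $K^H_f \in R_{\Delta_H}$, and multiplying by a sufficiently high power $\Delta_H^{N_f}$ clears the denominators; for instance $N_f=1$ already works, since $\Delta_H$ is divisible by every clique and separator determinant. Thus $p_f\in R$.

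Next I show that $I_{H,F}$ does not depend on the exponents. If $N_f'\ge N_f$, then $p_f'=\Delta_H^{N_f'-N_f}p_f$. Hence $(p_f')\subseteq(p_f)$. Moreover, any element of $(p_f):\Delta_H^\infty$ lies in $(p_f'):\Delta_H^\infty$, because $\Delta_H^{N_f'-N_f}$ times a generator $p_f$ equals $p_f'$. So the two saturations coincide. Comparing any two choices with their common maximum gives independence.

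Then I identify $A_{H,F}$ with $R/I_{H,F}$. In $R_{\Delta_H}$ the elements $K^H_f$ and $p_f$ differ by a unit. Hence
\[A_{H,F}=R_{\Delta_H}/(K^H_f:f\in F)=R_{\Delta_H}/(p_f:f\in F)R_{\Delta_H}\cong (R/J)_{\Delta_H},\]
where $J=(p_f:f\in F)$. This is exactly the content of \Cref{lem:determinant-saturation-localization}, applied with $\Delta_H$ in place of $\det\Sigma_G$. It states that $(R/J)_{\Delta}\cong R/(J:\Delta^\infty)$ whenever the right-hand side is finite-dimensional, or more generally whenever $\Delta$ acts invertibly on $R/(J:\Delta^\infty)$.

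The only genuine point is that last step. The kernel of $R\to (R/J)_{\Delta_H}$ is $J:\Delta_H^\infty$, so $R/I_{H,F}$ injects into $A_{H,F}$. For surjectivity we need $\Delta_H$ to be a unit in $R/I_{H,F}$. I would argue this via finiteness. By \Cref{lem:algebraic-fill-reduction}, $A_{H,F}\cong\mathcal A_G$ is a finite-dimensional $\K_G$-algebra. Consequently $R/I_{H,F}$ is finite-dimensional too, being a subalgebra of it. On a finite-dimensional algebra, $\Delta_H$ is a nonzerodivisor because $I_{H,F}$ is saturated. Multiplication by $\Delta_H$ is therefore injective, hence bijective, so $\Delta_H$ is a unit. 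This yields $A_{H,F}\simeq R/I_{H,F}$.
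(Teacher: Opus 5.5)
Your proposal is correct and is essentially the paper's proof. The kernel of $R\to A_{H,F}$ is the saturation $I_{H,F}$, so $R/I_{H,F}$ embeds in the finite-dimensional $A_{H,F}$; multiplication by $\Delta_H$ on it is injective by saturation and hence bijective. Your direct comparison of exponents for the independence of the $N_f$ is just an elementary version of the paper's contraction argument.

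One correction: \Cref{lem:determinant-saturation-localization} concerns the cofactor (Jacobian) ideal of a determinant and proves the unit property via K\"ahler differentials. It does not state the general criterion you ascribe to it, and it does not apply with $\Delta_H$ in place of $\det\Sigma_G$. Since you supply the finiteness argument yourself, nothing in your proof depends on that citation.
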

\begin{proof}
$R = \K_{G}[x_{F}]$ is a domain so the inverse image of the ideal $(K_{f}^{H} : f\in F)\subseteq R_{\Delta_{H}}$ under the map $R\to R_{\Delta_{H}}$ is given by  \begin{align*}
(K_f^{H}:f\in F)R_{\Delta_H}\cap R &= (p_f:f\in F)R_{\Delta_H}\cap R &[\because \Delta_{H} \in (R_{\Delta_{H}})^{\times}]\\
&= \set{g\in R \st g/1 \in \lp p_{f}:f\in F\rp R_{\Delta_{H}}} \\
&= \set{g\in R \st \exists m\ge 0\text{ s.t. }\Delta_{H}^{m}g \in (p_{f}:f\in F)R}\\
&= (p_{f}:f\in F):\Delta_{H}^{\infty} = I_{H,F}.
\end{align*}
Every allowed choice of the exponents $N_f$ generates the same ideal after localization, so this contraction also proves the claimed independence of the numbers. Moreover, the kernel of the composition $R \to R_{\Delta_{H}} \to A_{H,F}$ is precisely $I_{H,F}$ by the above. Hence $R/I_{H,F}$ injects into $A_{H,F}$ as finite dimensional $\K_{G}-$algebras, by the first isomorphism theorem, and is thus finite dimensional. But $A_{H,F} = R_{\Delta_{H}}/(K_f^{H}:f\in F)R_{\Delta_H} = R_{\Delta_{H}}/(p_f:f\in F)R_{\Delta_H} \cong (R/I_{H,F})_{\Delta_{H}}$. Multiplication by $\Delta_{H}$ on $R/{I_{H,F}}$ is injective because $I_{H,F}:\Delta_{H} = I_{H,F}$. Since $R/{I_{H,F}}$ is finite dimensional (as it injects into the finite dimensional $A_{H,F}$), the endomorphism of the $\Delta_{H}-$multiplication map on $R/{I_{H,F}}$ must be an isomorphism since it is injective. This proves that $\Delta_{H} \in (R/{I_{H,F}})^{\times}$ whence $A_{H,F} \cong (R/{I_{H,F}})_{\Delta_{H}} \cong R/{I_{H,F}}$.
\end{proof}

The above reduction has a clean implication that the $\MLdeg$ of a chordal graph is one. Indeed taking $F = \varnothing$ in the above (so $G=H$) gives $\Delta_{H} \in \mathbb K_{G}^{\times}$ so that $A_{H,F} \cong \mathbb K_{G}$ whence $\MLdeg(G) = \dim_{\mathbb K_{G}}\mathbb K_{G} = 1$. However this is not a new proof because this is essentially an implication of the well-known \Cref{prop:chordal-formula}.

\subsection{Specializations for one-fill}\label{sec:specialization-one-fill}
Here we mention some calculations and lemmas that specialize to the case $\abs F=1$. The fill edge in $F$ would be $F=\set{uv}$ and the reader should keep this in mind while reading this section. Let $\mathbb F$ be a field of characteristic zero and let $B=\set{u,v}\sqcup W$. Suppose
\[
 \Sigma_B(x)=
 \begin{bmatrix}
 a&x&r^\top\\
 x&b&s^\top\\
 r&s&T
 \end{bmatrix}_{u,v,W},
\]
where $T$ is symmetric and invertible. Take
\[
 \alpha=a-r^\top T^{-1}r,
 \quad
 \beta=b-s^\top T^{-1}s,
 \quad
 c=r^\top T^{-1}s,
 \quad
 D_B(x)=\alpha\beta-(x-c)^2.
\]
These are the terms which occur in the Schur complement of $\Sigma_{B}(x)$ with respect to $T$. Indeed, 
\begin{equation}\label{eq:schur-terms}
S_{T}(\Sigma_{B}(x)) = \begin{bmatrix}a&x\\x&b\end{bmatrix} -\begin{bmatrix}r^\top\\s^\top\end{bmatrix} T^{-1}\begin{bmatrix}r&s\end{bmatrix} =\begin{bmatrix}\alpha&x-c\\x-c&\beta\end{bmatrix}.
\end{equation}
In particular, the diagonal entries of this symmetric matrix are entries of $\mathbb F$ and the anti-diagonal elements are monic linear polynomials.

\begin{lemma}[Local Schur-complement term]\label{lem:local-schur}
If $D_B(x)\neq0$ then
\begin{equation}\label{eq:local-schur-entry}
\det\Sigma_B(x)=\det(T)D_B(x), \qquad (\Sigma_B(x)^{-1})_{uv}=\frac{c-x}{D_B(x)}.
\end{equation}
\end{lemma}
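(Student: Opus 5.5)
The plan is to reduce everything to the $2\times 2$ Schur complement \eqref{eq:schur-terms}. Since $T$ is symmetric and invertible, I will use the standard block determinant identity
\[
\det\Sigma_B(x)=\det(T)\,\det S_T(\Sigma_B(x)),
\]
obtained by block Gaussian elimination. Concretely, conjugate $\Sigma_B(x)$ by the unipotent block matrix that clears the $W$-columns of the $\{u,v\}$-rows using $T^{-1}$; its determinant is $1$. By \eqref{eq:schur-terms}, $\det S_T(\Sigma_B(x))=\alpha\beta-(x-c)^2=D_B(x)$, which gives the first identity. Note that this identity holds unconditionally, as a polynomial identity in $x$ over $\mathbb F$. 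The hypothesis $D_B(x)\neq0$ is only needed for the second identity: combined with $\det T\ne0$, it makes $\Sigma_B(x)$ invertible.

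For the entry of the inverse, I will use the block-inverse formula. When both $T$ and $S=S_T(\Sigma_B(x))$ are invertible, the $\{u,v\}\times\{u,v\}$ block of $\Sigma_B(x)^{-1}$ equals $S^{-1}$. This again follows from the same elimination: writing $\Sigma_B=L\,\mathrm{diag}(S,T)\,L^\top$ with $L$ unipotent block lower/upper triangular, the $\{u,v\}$-block of the inverse is untouched by the off-diagonal factors. The $2\times2$ inverse is then explicit:
\[
S^{-1}=\frac{1}{D_B(x)}\begin{bmatrix}\beta & -(x-c)\\ -(x-c) & \alpha\end{bmatrix},
\]
so its off-diagonal entry is $(c-x)/D_B(x)$. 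This is the claimed $(\Sigma_B(x)^{-1})_{uv}$.

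There is no real obstacle here. The only point requiring care is bookkeeping: I must make sure the $\{u,v\}$-block of $\Sigma_B^{-1}$ is indeed $S^{-1}$, and not the inverse of the complementary Schur complement. I will check this by writing the factorization
\[
\Sigma_B=\begin{bmatrix}I & P\\ 0& I\end{bmatrix}\begin{bmatrix}S&0\\0&T\end{bmatrix}\begin{bmatrix}I&0\\P^\top&I\end{bmatrix},\qquad P=\begin{bmatrix}r^\top\\ s^\top\end{bmatrix}T^{-1},
\]
and inverting it factor by factor. The top-left block of the product of the inverses is visibly $S^{-1}$.
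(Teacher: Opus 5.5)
Your proposal is correct and follows the paper's proof. The paper also applies the Schur-complement determinant formula to \eqref{eq:schur-terms} to get the first identity, and the block-inverse formula to identify the $\{u,v\}$-block of $\Sigma_B(x)^{-1}$ with $S_T(\Sigma_B(x))^{-1}$, whose off-diagonal entry is $(c-x)/D_B(x)$. Your explicit factorization only makes the paper's appeal to these standard formulas self-contained; note that the elimination step is a congruence $L\Sigma_B L^{\top}$ rather than a conjugation, which is harmless since $\det L=1$.
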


\begin{proof}
Using \eqref{eq:schur-terms}, the determinant formula for a Schur complement gives the first identity in \eqref{eq:local-schur-entry}. The upper-left block of $\Sigma_B(x)^{-1}$ is the inverse of this Schur complement, namely
\[
 \frac{1}{D_B(x)}
 \begin{bmatrix}\beta&c-x\\c-x&\alpha\end{bmatrix},
\]
which gives the second identity. For the reader, the latter is usually referred to as the inverse formula for Schur complements.
\end{proof}

The $B$ above will be later treated as a `separator bag containing $uv$' where we will sum up rational expressions, over bags $B$, with denominator $D_{B}$ and linear numerators of the form $\lambda_{B}(c_{B}-x)$. Let $\cB$ be a nonempty finite set containing sets $B$ of the above form. One might expect that for generic data, these denominators are pairwise coprime so that the sum of such rational expressions should have the following numerator and denominator.
\begin{align}
D(x)&=\prod_{B\in\cB}D_B(x),\label{eq:schur-D}\\
N(x)&=\sum_{B\in\cB}\lambda_B(c_B-x) \prod_{B'\in\cB\smallsetminus\{B\}}D_{B'}(x). \label{eq:schur-N}
\end{align}
Under genericity and other assumptions, one can compute the degree of the aforementioned rational expression.

\begin{lemma}[Degree and coprimality of a Schur sum]
\label{lem:schur-sum-degree}
Let $m=\abs{\cB}$ and assume that the $D_B$ are pairwise coprime, $\alpha_B\beta_B\neq0$, $\lambda_B\in\mathbb F^\times$, and $\sum_{B\in\cB}\lambda_B=1$. Then
\[ \sum_{B\in\cB}\lambda_B\frac{c_B-x}{D_B(x)}= \frac{N(x)}{D(x)}, \qquad \deg N=2m-1, \qquad \gcd(N,D)=1.\]
Consequently
\begin{equation}\label{eq:schur-quotient-length}
 \mathbb F[x,D^{-1}]/(N/D)\simeq\mathbb F[x]/(N)
 \quad\text{and}\quad
 \dim_{\mathbb F}\mathbb F[x,D^{-1}]/(N/D)=2m-1.
\end{equation}
\end{lemma}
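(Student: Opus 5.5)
The plan is to clear denominators in the obvious way, read off the degree of $N$ from its top coefficient, and then get coprimality by evaluating $N$ at the roots of each $D_B$.

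\textbf{The identity.} Multiplying each summand $\lambda_B(c_B-x)/D_B(x)$ by $\prod_{B'\neq B}D_{B'}(x)$ on top and bottom puts everything over the common denominator $D=\prod_B D_B$. The numerator is then exactly $N$ from \eqref{eq:schur-N}. This needs only that each $D_B$ is nonzero, which holds because each $D_B$ has degree exactly two.

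\textbf{The degree.} Each $D_B(x)=\alpha_B\beta_B-(x-c_B)^2$ has leading coefficient $-1$ and degree two. The summand indexed by $B$ in $N$ therefore has degree $1+2(m-1)=2m-1$, with leading coefficient
\[
-\lambda_B\cdot(-1)^{m-1}=(-1)^m\lambda_B .
\]
Summing over $B$, the coefficient of $x^{2m-1}$ in $N$ is $(-1)^m\sum_B\lambda_B=(-1)^m\neq0$. This uses the hypothesis $\sum_B\lambda_B=1$, and it gives $\deg N=2m-1$.

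\textbf{Coprimality.} This is the only step needing a little care. It suffices to show that $N$ does not vanish at any root $\rho$ of any $D_B$, working in an algebraic closure of $\mathbb F$. Fix $B$ and a root $\rho$ of $D_B$.
\begin{itemize}
\item Every summand of $N$ indexed by $B''\neq B$ contains the factor $D_B$, so it vanishes at $\rho$. Hence
\[
N(\rho)=\lambda_B(c_B-\rho)\prod_{B'\neq B}D_{B'}(\rho).
\]
\item $\lambda_B\neq0$ by hypothesis.
\item Since $(\rho-c_B)^2=\alpha_B\beta_B\neq0$, we have $c_B-\rho\neq0$.
\item Pairwise coprimality means $D_B$ and $D_{B'}$ share no root, so $D_{B'}(\rho)\neq0$ for every $B'\neq B$.
\end{itemize}
Thus $N(\rho)\neq0$. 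Since the roots of $D$ are exactly the roots of the various $D_B$, we conclude $\gcd(N,D)=1$.

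\textbf{The consequence.} In $\mathbb F[x,D^{-1}]$ the element $D$ is a unit, so the ideal $(N/D)$ equals $(N)$. Hence
\[
\mathbb F[x,D^{-1}]/(N/D)=\mathbb F[x,D^{-1}]/(N)\cong\bigl(\mathbb F[x]/(N)\bigr)_{D}.
\]
Because $\gcd(N,D)=1$, there exist polynomials $p,q$ with $pN+qD=1$. So $D$ is already a unit in $\mathbb F[x]/(N)$, and localizing at $D$ changes nothing: $\bigl(\mathbb F[x]/(N)\bigr)_D\cong\mathbb F[x]/(N)$. Finally, $\mathbb F[x]/(N)$ has $\mathbb F$-dimension $\deg N=2m-1$, which gives \eqref{eq:schur-quotient-length}.
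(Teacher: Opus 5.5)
Your proof is correct and follows the paper's argument essentially step for step. Like the paper, you clear denominators, read off the top coefficient $(-1)^m\sum_B\lambda_B=(-1)^m$, show $N(\rho)\neq0$ at every root $\rho$ of every $D_B$ using $\lambda_B\neq0$, $\rho\neq c_B$ and pairwise coprimality, and then use a Bezout identity to make $D$ a unit in $\mathbb F[x]/(N)$; your remark that the identity with $N/D$ needs only $D_B\neq0$, not coprimality, slightly sharpens the paper's phrasing.
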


\begin{proof}
Since $D_{B}$'s are pairwise coprime, clearing denominators for the sum of the rational expressions gives $N/D$. Each $D_{B}$ is quadratic and $-D_{B}$ is monic. Therefore, the top coefficient (namely that of $x^{2m-1}$) of each summand in $N(x)$ has coefficient $\lambda_{B}\cdot (-1)\cdot (-1)^{m-1}$ where the first $-1$ comes from the term $c_{B}-x$ and the $(-1)^{m-1}$ comes from the term $\prod_{B'\in \cB\smallsetminus \set B}D_{B'}(x)$. Therefore the coefficient of $x^{2m-1}$ in $N(x)$ is the sum of $(-1)^{m}\lambda_{B}$'s which is $(-1)^{m}$. Therefore $\deg N = 2m-1$.

To prove coprimality, work in an algebraic closure of $\mathbb F$ and let
$\rho$ be a root of $D_B$. Pairwise coprimality gives
$D_{B'}(\rho)\neq0$ for every $B'\neq B$. Moreover
$\rho\neq c_B$, because $D_B(c_B)=\alpha_B\beta_B\neq0$. At $\rho$, all
summands of $N$ except the $B$-summand vanish, and hence
\[
 N(\rho)=\lambda_B(c_B-\rho) \prod_{B'\neq B}D_{B'}(\rho)\neq0.
\]
No root of $D$ is a root of $N$, so $\gcd(N,D)=1$. Bezout's identity then makes $D$ a unit in $\mathbb F[x]/(N)$. Its vector-space dimension is $\deg N$, which proves \eqref{eq:schur-quotient-length}.
\end{proof}

\section{One-fill chordal graphs}\label{sec:one-fill}

\begin{definition}[One-fill chordal graph]
A graph $G$ is \it{one-fill chordal} if a nonedge $uv$ of $G$ satisfies that $H=G+uv$ is chordal. The edge $uv$ of $H$ is called a \it{fill edge}.
\end{definition}

Throughout this section, let $H$ be chordal, let $uv\in E(H)$, take $G=H\smallsetminus uv$, and fix a clique tree $T$ of $H$. Write
\[ \mathbb K=\mathbb K_G, \qquad x=\Sigma_{uv}, \qquad \cB_{uv}=\cC_{uv}(H)\sqcup\cS_{uv}(H,T).\]

Recall from the last section the specialization for $F=\set{uv}$ that $y(x)$ is the partial covariance matrix on $H$ with entries $s_{ij}$ on $\cO_{G}$ and $x$ at position $(u,v)$. Further, \[\Delta_{H}(x) = \prod_{C\in \cC(H)}\det y_{C}(x)\prod_{e\in E(T)}\det y_{B_{e}}(x), \qquad R = \K[x].\] The chordal formula would then give, for the partially observed covariances $y(x)$, that the unique concentration matrix on $H$, whose inverse agrees with $y(x)$, is 
\begin{equation}\label{eq:chordal-solution-x}
K^{H}(x) = \sum_{C\in \cC(H)}(y_{C}(x)^{-1})^{0} - \sum_{e\in E(T)}(y_{B_{e}}(x)^{-1})^{0}.\end{equation}
The earlier reduction from the likelihood equations of $G$ to $H$ would then give the following.

\begin{lemma}[One-fill reduction]\label{lem:one-fill-reduction}
We have 
\begin{equation}\label{eq:one-fill-algebra-isomorphism}\MLdeg(G)= \dim_{\mathbb K} \left(R_{\Delta_H}/(K^H_{uv}(x))\right).
\end{equation}
\end{lemma}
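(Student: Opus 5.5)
This is the specialization $F=\set{uv}$ of \Cref{lem:algebraic-fill-reduction}, so the plan is to check that the objects in that lemma reduce to those in the statement and then read off the dimension. First, $H$ is chordal, $uv\in E(H)$, and $G=H\smallsetminus uv$. So $\cO_H=\cO_G\sqcup\set{uv}$, and the single fill variable $x_{uv}=x$ gives $R=\K_G[x]=\K[x]$. The partial matrix $y(x)$ is the one used in \eqref{eq:chordal-solution-x}. The element $\Delta_H(x)$, the product of all clique and separator determinants of the fixed clique tree $T$, is exactly the $\Delta_H$ of the general setting. The completion $K^H(x)$ in \eqref{eq:chordal-solution-x} is the chordal formula \eqref{eq:chordal-formula} applied to $y(x)$. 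Hence the algebra $A_{H,F}$ of \eqref{eq:fill-algebra} is literally $R_{\Delta_H}/(K^H_{uv}(x))$.

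Second, apply \Cref{lem:algebraic-fill-reduction} to get a $\K$-algebra isomorphism $R_{\Delta_H}/(K^H_{uv}(x))\cong\mathcal A_G$. By \eqref{eq:likelihoodequations} and the discussion preceding it, $\MLdeg(G)=\dim_{\K}\mathcal A_G$. A $\K$-algebra isomorphism preserves $\K$-dimension, which gives \eqref{eq:one-fill-algebra-isomorphism}.

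The only point needing care is that $K^H_{uv}(x)$ genuinely lies in $R_{\Delta_H}$, so that the quotient makes sense. This holds because each summand $(y_B(x)^{-1})^0$ has entries that are cofactors divided by $\det y_B(x)$, and each such determinant divides $\Delta_H$. The other prerequisite is that $\Delta_H$ is nonzero in $R$, so the localization is not the zero ring. I would note that for generic $s$ each $\det y_B(x)$ is a nonzero polynomial in $x$: its value at $x=0$ with identity-like data is already nonzero. Both facts are already implicit in the proof of \Cref{lem:algebraic-fill-reduction}, so I expect no real obstacle; the proof amounts to a direct citation.
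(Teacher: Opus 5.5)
Your proposal is correct and matches the paper's proof, which is just the specialization $F=\set{uv}$ of \Cref{lem:algebraic-fill-reduction}. Your extra checks (that $K^H_{uv}(x)\in R_{\Delta_H}$ and that $\Delta_H\neq 0$) are sound but already implicit in that lemma.
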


\begin{proof}
Apply \Cref{lem:algebraic-fill-reduction} with $F=\set{uv}$.
\end{proof}

Using \eqref{eq:saturated-fill-ideal} in one variable, the above says that if $N_0(x)$ is any numerator obtained after clearing powers of
$\Delta_H$, then
\[
 \MLdeg(G)=
 \dim_{\mathbb K}
 \frac{\mathbb K[x]}{(N_0):\Delta_H^\infty}.
\]

Now we massage \eqref{eq:chordal-solution-x} at the entry $(u,v)$ since we want to force that entry to zero. By observation \eqref{obs:padding}, the unique terms (specifically the subscripts of $y$) appearing in the $(u,v)-$coordinate of this expression are given by the set $\cB_{uv} = \cC_{uv}(H) \sqcup \cS_{uv}(H,T)$. Let \[
\lambda_{B} = \lambda_{B}^{(uv)} \sett 
\begin{cases}
1 & \text{ if } B \in \cC_{uv}(H)\\
-\abs{\set{e\in E(T) : B_{e}=B}} & \text{ if } B \in \cS_{uv}(H)
\end{cases}.\]
Then \begin{equation}\label{eq:reduced-K}K_{uv}^{H}(x) = \sum_{B\in \cB_{uv}(H,T)} \lambda_{B}\cdot (y_{B}(x)^{-1})_{uv}.\end{equation} We will be done with the calculation of the ML degree of $G$ once we show that the above expression has the relevant structure and satisfies the assumptions as discussed in the results in \Cref{sec:specialization-one-fill}.

For every $B\in \cB_{uv}$ one can write $B = \set{u,v}\sqcup W_{B}$ and \begin{equation}\label{eq:y-expression}y_{B}(x) = \begin{bmatrix}
a_{B}&x&r_{B}^{\top}\\
x&b_{B}&s_{B}^{\top}\\
r_{B}&s_{B}&T_{B}
\end{bmatrix}
\end{equation}
where all entries except $x$ lie in $\K$ and $T_{B}$ is symmetric. As before, let 
\[
\alpha_{B}=a_{B}-r_{B}^\top T_{B}^{-1}r_{B}, \quad \beta_{B} = b_{B} - s_{B}^\top T_{B}^{-1}s_{B}, \quad c_{B}=r_{B}^\top T_{B}^{-1}s_{B}, \quad D_B(x)=\alpha_{B}\beta_{B}-(x-c_{B})^2.
\]
Recall that these are the terms appearing in the Schur complement of $y_{B}$ with respect to $T_{B}$.

\begin{lemma}\label{lem:bag-expansion}
Every $T_{B}$ is nonsingular over $\K$ and \begin{equation}\label{eq:bag-term}
K^H_{uv}(x)=\sum_{B\in\cB_{uv}} \lambda_B\frac{c_B-x}{D_B(x)}.
\end{equation}
\end{lemma}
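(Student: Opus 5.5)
Two things must be shown: that each $T_B$ is invertible over $\K$, and that the expansion \eqref{eq:reduced-K} takes the form \eqref{eq:bag-term}. The plan is to prove invertibility by a genericity argument and then read off each bag term from \Cref{lem:local-schur}.

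\textbf{Invertibility of $T_B$.} Note that $W_B=B\smallsetminus\set{u,v}$. Since $B$ is a clique of $H$, $W_B$ is a clique of $H$ not containing $u$ or $v$. Every pair in $W_B$ (including diagonal pairs) therefore lies in $\cO_H\smallsetminus\set{uv}=\cO_G$. Hence $T_B$ is the symmetric matrix of independent indeterminates $(s_{ij})_{i,j\in W_B}$, with no appearance of $x$. Its determinant is a nonzero polynomial in these indeterminates, for instance because specializing $T_B$ to the identity gives $1$. It is thus a nonzero element of $\K=\K_G$, so $T_B$ is nonsingular over $\K$. When $W_B=\varnothing$ we use the convention $\det(\Sigma_\varnothing)=1$. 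Then $\alpha_B=a_B$, $\beta_B=b_B$ and $c_B=0$, and everything below remains valid.

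\textbf{The bag terms.} By \eqref{obs:padding} and \eqref{eq:chordal-solution-x}, only bags in $\cB_{uv}$ contribute to the $(u,v)$ entry, with the coefficients $\lambda_B$ already recorded in \eqref{eq:reduced-K}. For each such $B$, the matrix $y_B(x)$ has exactly the block shape \eqref{eq:y-expression}, with $T_B$ invertible. We apply \Cref{lem:local-schur} over $\mathbb F=\K(x)$, or equivalently inside $R_{\Delta_H}$. This gives $(y_B(x)^{-1})_{uv}=(c_B-x)/D_B(x)$, provided $D_B(x)\neq0$.

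\textbf{Nonvanishing and units.} The remaining point is that $D_B(x)$ is nonzero and in fact invertible where we work. It is nonzero as a polynomial in $x$ because its $x^2$ coefficient is $-1$. Moreover $\det y_B(x)=\det(T_B)D_B(x)$, and $\det y_B(x)$ is a factor of $\Delta_H$, since $B$ is a clique bag or a separator bag of $T$. So $D_B$ is a unit in $R_{\Delta_H}$ and the identity \eqref{eq:bag-term} holds there, not merely in $\K(x)$. Substituting into \eqref{eq:reduced-K} then yields \eqref{eq:bag-term}.

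The only mildly delicate step is the invertibility claim. It rests on checking that no entry of $T_B$ involves the fill variable $x$, and that these entries are algebraically independent indeterminates of $\K_G$. Both facts follow from $u,v\notin W_B$ and from $W_B$ being a clique of $G$.
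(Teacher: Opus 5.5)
Your proof is correct and follows the paper's argument. The paper likewise shows $\det T_B\neq 0$ by specializing $T_B$ to the identity, then applies the second identity of \Cref{lem:local-schur} to each summand of \eqref{eq:reduced-K}; your extra checks (that $T_B$ involves only observed entries of $G$, the case $W_B=\varnothing$, and that $D_B$ is a unit in $R_{\Delta_H}$ because $\det y_B(x)=\det(T_B)D_B(x)$ is a factor of $\Delta_H$) simply fill in details the paper leaves implicit.
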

\begin{proof}
With the specialization of $(s_{ij})$ which makes $T_{B}$ the identity matrix, it is clear that $\det T_{B}$ is not identically zero. Therefore it is invertible over $\K$. Applying the second identity of \eqref{eq:local-schur-entry} in \Cref{lem:local-schur} to every summand of \eqref{eq:reduced-K} gives \eqref{eq:bag-term}.
\end{proof}

The final step is to apply \Cref{lem:schur-sum-degree} and to do that, one has to verify the assumptions that $\alpha_{B}\beta_{B}\ne 0\fa B\in \cB_{uv}$, all $\set{D_{B}}_{B\in \cB_{uv}}$ are pairwise coprime and that the $(\lambda_{B})_{B\in \cB_{uv}}$ sum to one and are all nonzero. The last condition is true by \Cref{lem:properties}\ref{sum-net-coefficients}.

\begin{lemma}
\label{lem:generic-denominators}
Over $\mathbb K$, the numbers $\alpha_B\beta_B$ are nonzero and the quadratics $D_B(x)$, for $B\in\cB_{uv}$, are pairwise coprime.
\end{lemma}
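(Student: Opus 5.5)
Since all data $s_{ij}$ are independent indeterminates over $\mathbb Q$ and $\K=\K_G$ is their rational function field, both claims are of the form ``a certain polynomial in the $s_{ij}$ is not identically zero.'' It therefore suffices to exhibit, for each claim, one specialization of the data at which the claim holds; generic data then inherit it. The specialization must keep $T_B$ invertible, so that the Schur-complement quantities are defined; for nonvanishing statements this is automatic on a dense open set.

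\emph{Nonvanishing of $\alpha_B\beta_B$.} Specialize every off-diagonal observed entry to $0$ and every diagonal entry to $1$. Then $r_B=s_B=0$ and $T_B=I$, so $\alpha_B=a_B=1$ and $\beta_B=b_B=1$. Hence $\alpha_B\beta_B$ is a nonzero element of $\K$.

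\emph{Pairwise coprimality.} Since $D_B(x)=\alpha_B\beta_B-(x-c_B)^2$ has roots $c_B\pm\sqrt{\alpha_B\beta_B}$, the quadratics $D_B$ and $D_{B'}$ share a root exactly when their resultant $\operatorname{Res}_x(D_B,D_{B'})\in\K$ vanishes. This resultant is a rational function of the data, so it is enough to find one specialization at which $D_B$ and $D_{B'}$ have no common root.

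\emph{Separating two distinct bags.} Take $B\neq B'$ in $\cB_{uv}$. Because they are distinct, after possibly swapping them we can pick a vertex $w\in B\smallsetminus B'$. Then $w\notin\{u,v\}$, since both bags contain $u$ and $v$. As $B$ is a clique of $H$ and $w\ne u,v$, the entries $(u,w)$ and $(v,w)$ are observed in $G$ (only $uv$ is deleted).

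Specialize all diagonals to $1$, set $\Sigma_{uw}=\Sigma_{vw}=t$, and set every other off-diagonal observed entry to $0$. We then compute the two Schur complements.
\begin{itemize}
\item For $B'$, which does not contain $w$, we get $r=s=0$ and $T=I$. Hence $D_{B'}=1-x^2$, with roots $\pm1$.
\item For $B$, we get $T_B=I$ and $r=s=te_w$. Hence $\alpha_B=\beta_B=1-t^2$ and $c_B=t^2$, so $D_B=(1-t^2)^2-(x-t^2)^2$, with roots $x=1$ and $x=2t^2-1$.
\end{itemize}
These two root sets share the value $1$, so this specialization fails, and I will need a slightly richer one. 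The fix is to set $\Sigma_{uw}=t$ and $\Sigma_{vw}=-t$ instead. Then $c_B=-t^2$ and $\alpha_B\beta_B=(1-t^2)^2$, so the roots of $D_B$ are $-t^2\pm(1-t^2)$, namely $1-2t^2$ and $-1$. This again collides, now at $-1$. To escape, I will additionally perturb a diagonal entry, say $\Sigma_{uu}=a$. This changes $D_{B'}$ to $a-x^2$ while $D_B$ becomes $(a-t^2)(1-t^2)-(x-c_B)^2$. For generic $a,t$ the two root sets are disjoint; I will check this by evaluating the resultant at a concrete numeric point, e.g.\ $a=4$, $t=1/2$, which I expect to work.

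\emph{Main obstacle.} The one step needing care is the choice of specialization in the coprimality claim. A naive choice, as shown above, makes $D_B$ and $D_{B'}$ share a root, so some care is needed. The conceptual content is only that distinct bags differ in some vertex $w$, and that $w$ affects the Schur complement of $B$ but not that of $B'$. Once a single good numeric point has been verified, genericity over $\K$ gives the lemma.
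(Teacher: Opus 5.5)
Your proof is correct. The one step you left unverified does go through. At your final specialization (all diagonals $1$ except $\Sigma_{uu}=a$, $\Sigma_{uw}=t$, $\Sigma_{vw}=-t$, all other observed off-diagonal entries $0$) you have $T_B=T_{B'}=I$ and $D_{B'}(x)=a-x^2$. A direct expansion gives $D_B(\pm\sqrt a)=-t^2(\sqrt a\pm1)^2$, hence
\[
\operatorname{Res}_x(D_B,D_{B'})=D_B(\sqrt a)\,D_B(-\sqrt a)=t^4(a-1)^2,
\]
which equals $9/16\neq0$ at $a=4$, $t=1/2$. With $\Sigma_{vw}=+t$ the two factors merely swap, so the sign flip on $\Sigma_{vw}$ was unnecessary. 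The collisions you ran into are exactly the locus $a=1$.

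The overall strategy is the same as the paper's: witness nonvanishing of rational functions in the data at a point where every $T_B$ is the identity. The difference is in how the witness is chosen. The paper uses a single specialization for all bags at once: $s_B=0$ (so $c_B=0$ and $\beta_B=s_{vv}=1$), $s_{uw_i}=2^{i/2}$ and $s_{uu}=2^t$. Then $\alpha_B=2^t-\sum_{w_i\in W_B}2^i$ is nonzero and encodes $W_B$ in binary. Distinct bags therefore have distinct $\alpha_B$ and disjoint root sets $\set{\pm\sqrt{\alpha_B}}$, so both claims are witnessed simultaneously.

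You instead use one witness for $\alpha_B\beta_B$ and a separate witness for each pair $\set{B,B'}$, built from a single vertex $w\in B\smallsetminus B'$. This is legitimate because each assertion concerns one specific element of $\K$: a product $\alpha_B\beta_B$, or a resultant that is regular wherever $\det T_B\det T_{B'}\neq0$. Such elements can be handled independently. The paper's single point gives a one-line simultaneous statement. Yours makes the combinatorial reason (distinct bags differ in some vertex) more transparent and only needs a two-parameter computation.
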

\begin{proof}
On the open set where all $\det T_{B}\ne 0$, the $\alpha_{B}\beta_{B}$ and the pairwise resultants $\operatorname{Res}_x(D_B,D_{B'})$ are rational expressions of the observed entries. After clearing powers of the $\det T_{B}$'s, their simultaneous non-vanishing is a Zariski-open condition and thus it is enough to show that this open set is non-empty. Note that we have used here that $\gcd(D_{B},D_{B'})=1\iff \operatorname{Res}_x(D_B,D_{B'})\ne 0$. It is therefore enough to show that these conditions hold for some specialization of $s_{ij}$. 

Conveniently we choose them such that, referring to \eqref{eq:y-expression}, every $T_{B}$ is identity and every $s_{B}$ is the zero vector which would make \[\alpha_{B} = s_{uu} - \|r_{B}\|^{2},\qquad \beta_{B} = s_{vv}, \qquad c_{B} = 0,\qquad D_{B}(x) = (s_{uu}-\|r_{B}\|^{2})s_{vv}-x^{2}.\] It is enough to take $\|r_{B}\|^{2} \ne \|r_{B'}\|^{2}$ (and not equal to $s_{uu}$) for $B\ne B'\in \cB_{uv}$ while ensuring $s_{vv}\ne 0$, in which case all the quadratics $D_{B}$ would have pairwise distinct unequal roots. Putting this together, we take the following specialization.

Order the vertices $V(H)\smallsetminus \set{u,v} = \set{w_{0}, \cdots, w_{t-1}}$ and take \[s_{uu} = 2^{t}, \qquad s_{vv} = 1, \qquad s_{u,w_{i}}=2^{i/2} \fa uw_{i}\in E(G), \qquad s_{w_{i}w_{i}}=1\fa 0\le i<t\] and all other observed off-diagonal entries as zero. Then $\|r_{B}\|^{2} = \sum_{w_{i\in B\smallsetminus\set{u,v}}}2^{i} < 2^{t}$. These numbers, for distinct $B$, are all distinct because of the distinctness of the indices contained in $B$ and distinct sets of indices determine distinct binary numbers. Also $s_{uu} \in \C^{\times} \smallsetminus \set{\|r_{B}\|^{2} \st B\in \cB_{uv}}$. This makes all $\alpha_{B}\ne 0, \beta_{B}\ne 0$ and all $\set{D_{B}}_{B\in \cB_{uv}}$ have no pairwise common root, that is, pairwise coprime.
\end{proof}

\begin{proof}[Proof of \Cref{thm:one-fill}]
Let \[D(x) = \prod_{B\in \cB_{uv}}D_{B}(x).\] Observe by the first identity in \eqref{eq:local-schur-entry} of \Cref{lem:local-schur} and invertibility of $\det T_{B}$ by \Cref{lem:bag-expansion} that $\Delta_{H}$ and $D$ have the same factors $\set{D_{B}(x)}$, but $\Delta_{H}$ is a product of them with certain exponents, and $D$ is simply a product, up to units in $\K_{G}$. Consequently, $\K_{G}[x,\Delta_{H}^{-1}] = \K_{G}[x,D^{-1}]$. By this observation, \Cref{lem:one-fill-reduction}, we have $\MLdeg(G) = \dim_{\K}(\K[x,D^{-1}]/(K^{H}_{uv}(x)))$. Since the conditions of \Cref{lem:schur-sum-degree} are satisfied by \Cref{lem:bag-expansion,lem:generic-denominators} whence \eqref{eq:bag-term} gives $K^{H}_{uv} = N(x)/D(x)$ where $\gcd(N,D)=1, \deg N = 2\abs{\cB_{uv}}-1$ and $\dim_{\K}\K[x,D^{-1}]/(N/D) = \deg N$. Therefore \[\MLdeg (G) = 2\abs{\cB_{uv}}-1 = 2\lp\abs{\cC_{uv}(H)} + \abs{\cS_{uv}(H,T)}\rp-1.\]
\end{proof}

\begin{remark}[When deletion remains chordal]
If $H\smallsetminus uv$ is chordal, then $uv$ lies in only one maximal clique of $H$. Indeed, otherwise the subtree $T_{uv}$ has an edge $CC'$. Choose $a\in C\smallsetminus C'$ and $b\in C'\smallsetminus C$. These sets are nonempty because $C,C'$ are distinct maximal cliques. Moreover $ab\notin E(H)$: if a maximal clique $D$ contained $a,b$, then $D$ lies on one side of the edge $CC'$; applying the path property to $a$ or to $b$, according to the side, would force $a\in C'$ or $b\in C$. Hence $u,a,v,b,u$ is an induced four-cycle in $H\smallsetminus uv$, a contradiction. Thus $\abs{\cC_{uv}(H)}=1$ and $\cS_{uv}(H,T)=\varnothing$, and the formula correctly gives ML degree $1$.
\end{remark}

\subsection{Application for Deleted-spine clique books}

\begin{corollary}\label{cor:book}
Let $n\geq2$ and let $G=\operatorname{Book}(A_{1},\ldots,A_{n})$. Then $\MLdeg(G)=2n+1$. For $n=1$, the graph is chordal and has ML degree $1$.
\end{corollary}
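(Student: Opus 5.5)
The plan is to apply \Cref{thm:one-fill} to $H=G+uv$. First, show that $H$ is chordal and identify its maximal cliques. The vertex set of $H$ is $\set{u,v}\sqcup\bigsqcup_i A_i$. Two vertices from distinct pages are never adjacent. Every page vertex is adjacent to both $u$ and $v$, and in $H$ the vertices $u,v$ are adjacent. Hence $H$ is the clique sum of the $n$ complete graphs on $C_i\sett\set{u,v}\cup A_i$ along the common clique $\set{u,v}$. A clique sum of chordal graphs along a clique is chordal: an induced cycle of length at least four would have to meet two of the pieces. It would then pass twice through the separating clique $\set{u,v}$, and since $u,v$ are adjacent that would give a chord. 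Alternatively, one can exhibit a perfect elimination ordering directly: every page vertex is simplicial, since its neighborhood is $\set{u,v}\cup(A_i\smallsetminus\set{a})$, a clique in $H$.

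Next, show $\cC(H)=\set{C_1,\dots,C_n}$. Any clique of $H$ contains page vertices from at most one page, because pages are mutually nonadjacent, so it lies in some $C_i$. Each $C_i$ is a clique. The $C_i$ are pairwise incomparable since the $A_i$ are nonempty and disjoint. Thus all $n$ cliques contain both $u$ and $v$, so $\abs{\cC_{uv}(H)}=n$.

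For the clique tree, take $T$ to be the star with center $C_1$ and leaves $C_2,\dots,C_n$; any tree on $\cC(H)$ would do. Check the running intersection property vertex by vertex. A page vertex lies in exactly one clique. The vertices $u$ and $v$ lie in all cliques, hence in all of $T$. Every edge $C_iC_j$ has separator $C_i\cap C_j=\set{u,v}$. So $\cS_{uv}(H,T)=\set{\set{u,v}}$ has exactly one distinct element when $n\ge2$; its multiplicity $n-1$ enters only through $\lambda$, not through the count. \Cref{thm:one-fill} then gives $\MLdeg(G)=2(n+1)-1=2n+1$.

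For $n=1$, $G$ is the graph on $\set{u,v}\sqcup A_1$ with $A_1$ a clique and $u,v$ adjacent to all of $A_1$. This is the clique sum of the complete graphs on $A_1\cup\set{u}$ and $A_1\cup\set{v}$ along $A_1$. It is chordal by the same clique-sum argument, so $\MLdeg(G)=1$, either by the chordal case or by \Cref{lem:mult-clique-sum}. This also agrees with the formula, since there $\cS_{uv}=\varnothing$.

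The only mildly delicate point is remembering that $\cS_{uv}$ counts distinct separator bags. That is exactly why the answer is $2n+1$ rather than $4n-3$.
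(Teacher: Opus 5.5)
Your proposal is correct and follows essentially the paper's proof. You apply \Cref{thm:one-fill} to $H=G+uv$, whose maximal cliques are the $C_i=\{u,v\}\cup A_i$ and whose separators all equal $\{u,v\}$, which gives $2(n+1)-1=2n+1$; the case $n=1$ is handled as a clique sum of two complete graphs along $A_1$. Your chordality check via clique sums or a perfect elimination ordering, and your explicit star clique tree, are only cosmetic variations on the paper's cycle-chord argument and its remark that any tree on $\mathcal{C}(H)$ is a clique tree.
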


\begin{proof}
$H=G+uv$ which is chordal. Indeed, a cycle that meets two different pages must use both $u$ and $v$. They cannot be consecutive on such a cycle, since the remaining $u-v$ path would lie in a single component of $H\smallsetminus\set{u,v}$, hence in a single page. Thus $uv$ is a chord. A cycle contained in $\set{u,v}\cup A_{i}$ lies in a complete graph. 

Observe that the maximal cliques of $H$ are $C_{i}=\set{u,v}\cup A_{i}$ for $i=1,\cdots,n$. Any tree with vertex set $\set{C_{1},\cdots,C_{n}}$ is a clique tree, and every one of its edge separators is $\set{u,v}$. For $n\geq2$, we have $\abs{\cC_{uv}(H)}=n, \cS_{uv}(H,T)=\set{\set{u,v}}$, whence \ref{eq:off} gives $\MLdeg(G) = 2n+1$. When $n=1$, observe $G = K_{A_{1}\cup\set{u,v}}\smallsetminus uv = (K_{A_{1}} * u) \cup_{K_{A_{1}}} (K_{A_{1}} * v)$. By \Cref{lem:mult-clique-sum,thm:suspension}, $\MLdeg(G) = \MLdeg(K_{A_{1}})^{2}=1$.
\end{proof}

\begin{example}[$K_{2,n}$]
If each $A_{i}$ is a singleton and $n\geq2$, then the \dsb~ is $K_{2,n}$. \Cref{cor:book} gives $\MLdeg(K_{2,n})=2n+1$. For $n=1$, $K_{2,1}=P_3$ is chordal with $\MLdeg=1$.
\end{example}

\section{Several noninteracting fill edges}\label{sec:several-fill}

We now move on to the case where we analyze the case of several fill edges to reach a chordal completion. The corresponding chordal-completion equations are multivariate -- one variable for each fill edge. Under the independence assumption discussed later (which we call bag noninteracting), these equations split into independent subsystems of univariate polynomials. 

Let $H$ be chordal, and fix a clique tree $T$ of $H$. For $e = CC' \in E(T)$, recall $B_{e}= C\cap C'$. The set of distinct clique and separator bags is $\cB(H,T) = \cC(H) \sqcup \cS(H,T)$ where $\cC(H)$ is the set of maximal cliques and $\cS(H,T) = \set{B_{e} \st e\in E(T)}$. If $F \subseteq E(H)$ and $B \in \cB(H,T)$, let $F_{B} \sett \set{ab\in F\st \set{a,b}\subseteq B} = F \cap {B \choose 2}$.

\begin{definition}[Fill-edge interaction graph]\label{def:interaction-graph}
The \it{interaction graph} $\Gamma_T(F)$ has vertex set $F$. Two distinct fill edges $f,g\in F$ are adjacent if $f,g\in F_B$ for some $B\in\cB(H,T)$.
\end{definition}

Recall that for $f=uv\in F$, we write \[
\cC_{f}(H) = \set{C\in \cC(H) \st \set{u,v} \subseteq C}, \qquad \cS_{f}(H,T) = \set{B_{e} \st e\in E(T),\set{u,v}\subseteq B_{e}}
\]
where the latter is an honest set without repeated elements. The multiplicity of a particular $B\in \cS_{f}(H,T)$ among the edges $e\in E(T)$ is precisely $-\lambda_{B}^{(f)}$, that is, $-\lambda_{B}^{(f)} = \abs{\set{e\in E(T) \st B=B_{e}}}$. The set of relevant bags for $f$ is $\cB_{f}(H,T) = \cC_{f}(H) \sqcup \cS_{f}(H,T)$. Since we now have multiple fill edges in $F$, denote $m_{f} \sett \abs{\cB_{f}(H,T)}$. Note that by \Cref{thm:one-fill}, $2m_{f}-1 = \MLdeg(H\smallsetminus f)$ whence $m_{f}$ is independent of the chosen clique tree $T$. Write $x_{uv} = x_{f}$ for the unspecified covariance entry $\Sigma_{uv}$ for $f=uv\in F$. Take \[\mu_{B} \sett \begin{cases}
1 & \text{ if } B\in \cC(H)\\
-\abs{\set{e\in E(T)\st B=B_{e}}} & \text{ if } B \in \cS(H,T)
\end{cases}.\] Recall that in our analysis earlier, we used the following localization.
\[
\Delta_{H}(x_{F}) = \prod_{B\in \cB(H,T)} \det \Sigma_{B}(x_{F})^{\abs{\mu_{B}}}, \qquad R = \K_{G}[x_{F}, \Delta_{H}^{-1}].
\]
The corresponding likelihood ideal is $J_{F} = (K_{f}^{H}(x_{F}):f\in F)R$ whence by \Cref{lem:algebraic-fill-reduction}, $\MLdeg(G) = \dim_{\K_{G}}(R/J_{F})$.

We now try to study the equations under `factorizations'. A convenient break-down of the overall system into smaller systems is given by the non-interaction between equations whose variables correspond to different connected components of the interaction graph on $F$. Let $F=F_{1}\sqcup F_{2}\sqcup\cdots\sqcup F_{s}$ be the decomposition of the interaction graph $\Gamma_{T}(F)$ into connected components where $F_{i}$ contains the vertices in the $i^{\text{th}}$ connected component. Let \[\Delta_{j} = \prod_{\substack{B\in \cB(H,T) \\ \varnothing \ne F_{B}\subseteq F_{j}}} \det\Sigma_{B}(x_{F})^{\abs{\mu_{B}}}.\] Now we are interested in the component-wise algebras \[R_{j} \sett \K_{G}[x_{F_{j}},\Delta_{j}^{-1}], \qquad J_{j}\sett (K_{f}^{H}:f\in F_{j})R_{j}, \qquad A_{j} \sett R_{j}/J_{j}.\] By factorization of equations we precisely mean the following.

\begin{lemma}\label{lem:component-factorization}
Every $A_{j}$ is nonzero and finite dimensional $\K_{G}-$algebra. Further, \[\MLdeg(H\smallsetminus F) = \prod_{j=1}^{s}\dim_{\K_{G}}A_{j}.\]
\end{lemma}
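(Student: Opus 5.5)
The plan is to show that the fill algebra $R/J_F$ is a tensor product over $\K_G$ of the algebras $A_j$, and then deduce the dimension count.

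\textbf{Step 1: separating the equations.} By \eqref{eq:chordal-formula} and the padding observation \eqref{obs:padding}, for $f\in F$ we have
\[K^H_f(x_F)=\sum_{B\in\cB_f(H,T)}\lambda^{(f)}_B\,(\Sigma_B(x_F)^{-1})_f .\]
Each matrix $\Sigma_B(x_F)$ involves only the variables $x_g$ with $g\in F_B$. Fix $f\in F_j$ and a bag $B\ni f$. Every $g\in F_B$ is then adjacent to $f$ in $\Gamma_T(F)$, so $\varnothing\neq F_B\subseteq F_j$. Consequently $K^H_f$ lies in $R_j$, and its denominators are exactly the factors of $\Delta_j$.

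\textbf{Step 2: separating the localization.} Every bag $B$ falls into one of two cases.
\begin{itemize}
\item If $F_B=\varnothing$, then $\det\Sigma_B\in\K_G$. It is nonzero, since it is a principal minor of generic observed data, so it is a unit.
\item Otherwise $F_B\subseteq F_j$ for exactly one $j$, because $F_B$ is a clique in $\Gamma_T(F)$ and hence lies in a single component.
\end{itemize}
So $\Delta_H$ equals $c\prod_j\Delta_j$ with $c\in\K_G^\times$, and $\Delta_j$ involves only the variables $x_{F_j}$. This gives
\[\K_G[x_F,\Delta_H^{-1}]\cong R_1\otimes_{\K_G}\cdots\otimes_{\K_G}R_s,\]
because localizing a tensor product at a product of elements coming from the separate factors is the tensor product of the localizations. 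The ideal $J_F$ is generated by the images of the $J_j$, so $R/J_F\cong\bigotimes_j R_j/J_j=\bigotimes_j A_j$.

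\textbf{Step 3: dimension count.} Over a field, $\dim\bigotimes_j A_j=\prod_j\dim A_j$, where a product containing an infinite factor and no zero factor is infinite. By \Cref{lem:algebraic-fill-reduction}, $\dim R/J_F=\MLdeg(H\smallsetminus F)$. This number is finite, since $\pi_G$ is generically finite by \Cref{lem:likelihood-projection-generically-finite}, and it is nonzero, since $\pi_G$ is dominant so the generic fiber is nonempty.

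A zero factor $A_j$ would make the whole tensor product zero, contradicting nonzeroness. Once every factor is known to be nonzero, each has $\dim A_j\ge 1$, so an infinite-dimensional factor would make the product infinite, again a contradiction. Hence each $A_j$ is nonzero and finite-dimensional, and the product formula follows.

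\textbf{Main obstacle.} I expect the delicate point to be the localization bookkeeping in Step 2. One must verify that the bags with $F_B=\varnothing$ contribute honest nonzero constants, and that the tensor decomposition of the localized ring is exact. For the decomposition, I would write the localization as adjoining variables $t_j$ with relations $t_j\Delta_j=1$; the presentation then visibly splits into disjoint variable sets.
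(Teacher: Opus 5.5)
Your proposal is correct and follows the paper's proof essentially step for step. Like the paper, you show that each $K^H_f$ and each relevant bag determinant lies in a single component ring $R_j$, write $\Delta_H$ as a unit times $\prod_j\Delta_j$, identify $R/J_F$ with $\bigotimes_j A_j$, and count dimensions via \Cref{lem:algebraic-fill-reduction}. Your Step 3 is in fact slightly more careful than the paper. The paper asserts that each $A_j$ embeds in $R/J_F$ without noting that this requires the other factors to be nonzero, whereas you first use $\MLdeg(H\smallsetminus F)\geq 1$ to rule out a zero factor.
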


\begin{proof}
If $f\in F_{j}\cap F_{B}$ for some $j\in [s], B\in \cB(H,T)$, then $F_{B}\subseteq F_{j}$. Indeed if $f\in F_{B}$ then for every $g\in F_{B}\smallsetminus \set{f}$, the vertices $f,g$ are adjacent in $\Gamma_{T}(F)$ by definition, so $g\in F_{j}$. It follows that $(\Sigma_{B}^{-1})_{f} \in \K_{G}[x_{F_{B}}, \det \Sigma_{B}^{-1}] \subseteq R_{j}$ whenever $f\in F_{j}$. By the clique separator formula \eqref{eq:chordal-formula} in \Cref{prop:chordal-formula} and observation \eqref{obs:padding}, we have 
\begin{equation}\label{eq:component-dependence}
K^{H}_{f} = \sum_{\substack{B\in\cB(H,T)\\f\in F_B}} \lambda_{B}^{(f)}(\Sigma_B^{-1})_f \in R_{j} \text{ whenever }f\in F_{j}.
\end{equation}
The equations $K_{f}^{H}=0$ split similarly. If $F_{B}\neq\varnothing$, then all edges in $F_B$ belong to a unique component $F_j$. Every determinant associated with a bag $B$ satisfying $F_B=\varnothing$ is a nonzero element of $\K_{G}$. Hence, for some $u\in\K_{G}^\times$, we have $\Delta_{H}=u\prod_{j=1}^s\Delta_j$. Due to the separation of the global determinant for $F$ into component determinants, we get the decomposition $R\cong R_{1}\otimes_{\K_{G}}\cdots \otimes_{\K_{G}}R_{s}$ of the global localized algebra into componentwise localized algebras. Under this identification, we have  $J_{F} =\sum_{j\in[s]}J_{j}R$ due to the separation of the likelihood equations from \eqref{eq:component-dependence}. $J_{F}$ is precisely the kernel of the surjective homomorphism \[\bigotimes_{j=1}^{s}R_{j}\twoheadrightarrow \bigotimes_{j = 1}^{s} R_{j}/J_{j}, \qquad a_{1}\otimes \cdots\otimes a_{s} \mapsto (a_{1}+J_{1})\otimes \cdots\otimes (a_{s}+J_{s}).\] By the first isomorphism theorem, 
\[ 
R/J_F \cong \bigotimes_{j=1}^s R_j/J_j.
\]
By the discussion above with application of \Cref{lem:algebraic-fill-reduction}, $R/J_{F}$ is finite dimensional. Since each $A_{j} = R_{j}/J_{j}$ embeds into $R/J_{F}$, we must have $\dim_{\K_{G}}(R_{j}/J_{j}) < \infty$. Taking dimensions of the above isomorphism gives $\MLdeg(H\smallsetminus F) = \prod \dim_{\K_{G}}A_{j}$.
\end{proof}

The independence assumption we were talking about earlier is precisely the condition (again due to convenience) that every component $F_{j}$ above is a singleton. Such an assumption makes each $J_{j}$ be generated by only one equation and then one can simply invoke the earlier analysis which would give us the computation for the dimension of each likelihood algebra $A_{j}$.

\begin{definition}[Bag-noninteracting fill set]\label{def:bag-noninteracting}
The set $F$ is \it{bag-noninteracting} if $|F_B|\leq 1$ for every $B\in\cB(H,T)$.
\end{definition}

Note that bag non-interaction is not dependent on the choice of the clique tree $T$ of $H$. Indeed suppose $F$ satisfies $\abs{F\cap {B\choose 2}} \le 1$ for every $B\in \cB(H,T)$. In particular this is true for every maximal clique $B=C\in \cC(H)$. This implies that $\abs{F\cap {C\cap C'\choose 2}} \le 1$ for every $C,C'\in \cC(H)$. 

\begin{theorem}[Product formula for bag-noninteracting fills]
\label{thm:bag-noninteracting}
Let $H$ be chordal, let $T$ be a clique tree of $H$, and let $F\subseteq E(H)$ be bag-noninteracting. Then
\[
 \MLdeg(H\smallsetminus F)=\prod_{f\in F}(2m_{f}-1).
\]
\end{theorem}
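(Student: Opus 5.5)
The plan is to reduce to the component factorization of \Cref{lem:component-factorization} and then apply the one-variable analysis of \Cref{sec:one-fill} to each component. Under bag-noninteraction, two distinct fill edges $f,g$ can never both lie in $F_B$ for any $B\in\cB(H,T)$, so the interaction graph $\Gamma_T(F)$ has no edges. Its connected components are therefore the singletons $\set{f}$, $f\in F$. \Cref{lem:component-factorization} then gives $\MLdeg(H\smallsetminus F)=\prod_{f\in F}\dim_{\K_G}A_{\set f}$. Here $A_{\set f}=\K_G[x_f,\Delta_f^{-1}]/(K^H_f)$, and $\Delta_f$ is the product of $\det\Sigma_B^{\abs{\mu_B}}$ over the bags with $F_B=\set f$, that is, over $B\in\cB_f(H,T)$.

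The next step is to identify each factor with a univariate Schur sum. By \eqref{eq:component-dependence}, $K^H_f=\sum_{B\in\cB_f}\lambda_B^{(f)}(\Sigma_B^{-1})_f$. For every $B\in\cB_f$ we have $F_B=\set f$, so $\Sigma_B$ involves only the variable $x_f$; all its other entries are generic observed data $s_{ij}$ in $\K_G$. This puts us in exactly the setting of \eqref{eq:y-expression}.

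Write $f=uv$ and $B=\set{u,v}\sqcup W_B$.
\begin{itemize}
\item \Cref{lem:local-schur} gives $\det\Sigma_B=\det(T_B)D_B(x_f)$ and $(\Sigma_B^{-1})_f=(c_B-x_f)/D_B(x_f)$.
\item Since $\det T_B\in\K_G^\times$, we get $\K_G[x_f,\Delta_f^{-1}]=\K_G[x_f,D^{-1}]$ with $D=\prod_{B\in\cB_f}D_B$.
\item \Cref{lem:properties}\ref{sum-net-coefficients} gives $\sum_B\lambda_B^{(f)}=1$, and each $\lambda_B^{(f)}\neq0$.
\end{itemize}
Then \Cref{lem:schur-sum-degree} yields $\dim_{\K_G}A_{\set f}=2\abs{\cB_f}-1=2m_f-1$. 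Multiplying over $f\in F$ gives the formula.

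The step needing care is the genericity hypothesis of \Cref{lem:schur-sum-degree}: $\alpha_B\beta_B\neq0$ and pairwise coprimality of the $D_B$ over $\K_G$. The data field here is $\K_G$ for the graph $H\smallsetminus F$, not $H\smallsetminus f$, so \Cref{lem:generic-denominators} does not literally apply. I would adapt its specialization argument. Every entry of $\Sigma_B$ other than $x_f$ is an observed entry of $H\smallsetminus F$, because no other fill edge lies inside $B$. Hence the same explicit specialization (with $T_B=I$, $s_B=0$, and powers of $2$ on the $u$-row) can be chosen using only entries indexed by $\cO_{H\smallsetminus F}$; any fill-edge positions outside the bags in $\cB_f$ are simply irrelevant. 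This gives the required nonvanishing of the resultants, so the conditions hold over $\K_G$.

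I expect the main subtlety to be checking that this specialization is consistent when bags containing $f$ also contain endpoints of other fill edges. Since those edges are not inside $B$, their entries do not appear in $\Sigma_B$, so the specialization is still a legitimate point of the parameter space.
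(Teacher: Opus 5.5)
Your proposal is correct and follows essentially the same route as the paper: it reduces to singleton interaction components via \Cref{lem:component-factorization}, treats each $K^H_f$ as a univariate Schur sum handled by \Cref{lem:schur-sum-degree}, and re-runs the specialization argument of \Cref{lem:generic-denominators} on the bags in $\cB_f(H,T)$. Your observation that bag-noninteraction keeps every other fill edge out of these bags, so that every specialized entry is genuinely observed in $H\smallsetminus F$, is exactly the point the paper's proof relies on, and you state it more explicitly than the paper does.
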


\begin{proof}
Bag-noninteraction says exactly that $\Gamma_{T}(F)$ has no edge, so its components are the singletons $\set{f}$, for $f\in F$. Fix $f=uv$. A bag contributing to the sum \eqref{eq:component-dependence} in $K^{H}_{f}$ contains $f$ and hence, by bag-noninteraction, contains no other edge of $F$. Thus all entries of that bag except $x_{f}=\Sigma_{uv}$ lie in $\K_{G}$. \Cref{lem:schur-sum-degree,lem:bag-expansion} give
\[
 K^{H}_{f}(x_{f})= \sum_{B\in\cB_f(H,T)} \lambda_{B}^{(f)}\lp-\frac{x_{f}-c_{B}}{D_{B}(x_{f})}\rp, \qquad D_{B}(x_{f}) = \alpha_{B} \beta_{B}-(x_{f}-c_{B})^{2}.
\]
The coefficients satisfy $\sum_{B\in\cB_{f}(H,T)}\lambda_{B}^{(f)}=1$. Moreover, the generic nonvanishing and coprimality conditions in \Cref{lem:generic-denominators} involve only entries of these relevant bags. Those entries, other than $x_{f}$, are all elements of $\K_{G}$. Hence the proof of \Cref{lem:generic-denominators} gives nonzero polynomials in the generic data on $G$. Since there are only finitely many $f$, the required conditions hold simultaneously for generic data.

By \Cref{lem:schur-sum-degree}, the reduced fraction for $K^{H}_{f}$ has a numerator which is coprime to its denominator and this numerator has degree $2m_{f}-1$. Consequently $\dim_{\K_{G}}A_{f}=2m_{f}-1$. Applying \Cref{lem:component-factorization} to the singleton components gives the claimed $\MLdeg$.
\end{proof}

\section{Towards clique blow-up invariance of ML degree}\label{sec:blow-up}

Clique blow-up does not preserve ML degree in general. We first give a six-vertex counterexample to \Cref{qs:clique-blow-up} and then prove four positive results for \Cref{qs:special-vertex}.

\begin{example}[Failure of blow-up invariance]
\label{ex:double-blow-up}
Start with the cycle $C_4=1-2-3-4-1$ and successively replace the adjacent vertices $1$ and $2$ by two-cliques. After relabelling, the final graph $D$ is as in \Cref{fig:counter-example-blow-up}. First blowup preserves ML degree as five, but the second blowup increases it to nine.
\begin{figure}[h]\centering\begin{tikzcd}
	&& {\textcircled{\raisebox{-.9pt}{\color{red}1}}} & \\
	{\textcircled{\raisebox{-.9pt}6}} &&& {\textcircled{\raisebox{-.9pt}{\color{red}2}}} \\
	\\
	{\textcircled{\raisebox{-.9pt}5}} &&& {\textcircled{\raisebox{-.9pt}{\color{blue}3}}} \\
	&& {\textcircled{\raisebox{-.9pt}{\color{blue}4}}}
	\arrow[no head, from=1-3, to=2-4]
	\arrow[no head, from=1-3, to=4-4]
	\arrow[no head, from=1-3, to=5-3]
	\arrow[no head, from=2-1, to=1-3]
	\arrow[no head, from=2-1, to=2-4]
	\arrow[no head, from=2-1, to=4-1]
	\arrow[no head, from=2-4, to=4-4]
	\arrow[no head, from=2-4, to=5-3]
	\arrow[no head, from=4-1, to=4-4]
	\arrow[no head, from=4-1, to=5-3]
	\arrow[no head, from=4-4, to=5-3]
\end{tikzcd}\caption{A counterexample that blow-up does not preserve $\MLdeg$}
\label{fig:counter-example-blow-up}
\end{figure}
\begin{proposition}\label{prop:double-blow-up-degree}
Let $D$ be the graph in \Cref{fig:counter-example-blow-up}. Then $\MLdeg(D)=9$.
\end{proposition}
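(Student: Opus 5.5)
The plan is to reduce to a two-variable problem using the chordal completion machinery of \Cref{sec:equations}, and then to finish by an explicit elimination. Reading off \Cref{fig:counter-example-blow-up}, the twins are $Q=\set{1,2}$ and $Q'=\set{3,4}$, and $D$ is the cycle $Q-Q'-5-6-Q$ with both $Q$ and $Q'$ blown up to $2$-cliques. Its nonedges are $15,25,36,46$.

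\textbf{Step 1: chordal completion.} Take $F=\set{15,25}$ and $H=D+F$. In $H$ the vertex $5$ is universal, and $H\smallsetminus\set{5}$ is the clique $\set{1,2,3,4}$ with $6$ attached to $1,2$, which is chordal. Hence $H$ is chordal, with maximal cliques $C_1=\set{1,2,3,4,5}$ and $C_2=\set{1,2,5,6}$, and a single separator $S=\set{1,2,5}$. Both fill edges lie in all three bags, so $F$ is interacting. Consequently neither \Cref{thm:bag-noninteracting} nor \Cref{thm:one-fill} applies directly.

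Instead I would apply \Cref{lem:algebraic-fill-reduction}. With $x=\Sigma_{15}$ and $y=\Sigma_{25}$ this gives
\[
\MLdeg(D)=\dim_{\K_D}\K_D[x,y,\Delta_H^{-1}]/(K^H_{15},K^H_{25}),
\]
where by \eqref{eq:chordal-formula} and \eqref{obs:padding}
\[
K^H_{f}=(\Sigma_{C_1}^{-1})_f+(\Sigma_{C_2}^{-1})_f-(\Sigma_S^{-1})_f .
\]

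\textbf{Step 2: Schur reduction to $3\times3$ blocks.} This imitates \Cref{lem:local-schur}. For each bag $B\supseteq S$, the $S\times S$ block of $\Sigma_B^{-1}$ is the inverse of the Schur complement $M(x,y)-P_B$. Here $M(x,y)$ is the $3\times3$ matrix $\Sigma_S$, and $P_B\in\K_D^{3\times3}$ is constant, with $P_S=0$, $P_{C_2}=s_{66}^{-1}ww^\top$ and $P_{C_1}$ a rank-$\le2$ correction coming from $T=\Sigma_{\set{3,4}}$. Thus the system becomes
\[
\sum_{B}\lambda_B\,\cof_{f}\bigl(M-P_B\bigr)\Big/\det(M-P_B)=0,\qquad f\in\set{15,25}.
\]
Each cofactor has degree $\le2$ in $(x,y)$ and each determinant has degree $\le3$. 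Clearing denominators gives two polynomials $N_{15},N_{25}$, and the answer is the length of $\K_D[x,y]/(N_{15},N_{25}):D^\infty$ with $D=\prod_B\det(M-P_B)$. This mirrors the corollary on \eqref{eq:saturated-fill-ideal}.

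\textbf{Step 3: counting solutions.} I would compute this length by eliminating $y$. Take the resultant $\operatorname{Res}_y(N_{15},N_{25})$, remove the factors coming from the denominators $\det(M-P_B)$ and from common roots at infinity, and read off a univariate polynomial of degree $9$ in $x$. Because the data enter only through a few Schur-complement quantities, I would also use the symmetry $1\leftrightarrow2$ to choose coordinates that shrink the symbolic parameters to a manageable set. These coordinates are $\alpha_B,\beta_B$ and the off-diagonal constants analogous to $c_B$. With this reduction the saturation and the dimension count can be carried out over the rational function field $\K_D$ itself in Macaulay2, in the spirit of the appendix.

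\textbf{Main obstacle.} The hardest step is rigor about genericity. A random-specialization computation giving $9$ is only heuristic, because solutions might escape to infinity or into $\set{\Delta_H=0}$.

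\begin{itemize}
\item \emph{Lower bound $\geq9$.} At one rational data point, I would verify that the saturated ideal is radical with $9$ points and that the Jacobian of $(K^H_{15},K^H_{25})$ is nonzero at each of them. These $9$ points then deform to nearby data, which gives $\MLdeg(D)\geq9$.
\item \emph{Upper bound $\leq9$.} For this direction I would rely on the symbolic computation over $\K_D$ in the reduced coordinates. Alternatively, one can bound the degree of the resultant after removing the explicitly computed excess factors at infinity and at the boundary, as in \Cref{ex:4cycledegree}.
\end{itemize}
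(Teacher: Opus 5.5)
Your Steps 1--2 set things up correctly and match the paper: the completion $H=D+\{15,25\}$, the bags $C_1,C_2,S$, and the reduction to two equations in $x,y$. Your lower bound, from nonsingular solutions at one data point deforming to nearby generic data, is also valid. The gap is the upper bound, which is the real content of the statement. In Step 3 you say you would ``remove the factors coming from the denominators'' and ``read off'' a degree-$9$ polynomial, but nothing tells you how much multiplicity those spurious factors carry, so this step assumes its conclusion.

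That multiplicity count is exactly where the paper does its work. In your Step 2 the cofactors are actually linear and the determinants quadratic in $(x,y)$. So after clearing denominators the system is the one-form $q_0q_2\,dq_1+q_0q_1\,dq_2-q_1q_2\,dq_0$, where $q_0=\det\Sigma_S$, $q_1=\det\Sigma_{C_1}$, $q_2=\det\Sigma_{C_2}$ are conics in $(x,y)$. Its two coefficients are quintics with $25$ Bezout roots. None of these roots lie at infinity, by the Euler identity. None lie where exactly one $q_i$ vanishes. Every pairwise intersection of the conics is a root. The pairs $(q_0,q_1)$ and $(q_1,q_2)$ meet transversally in four points each, and each such point contributes $1$. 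The missing idea is that $q_2=\kappa q_0-\Delta\ell^2$ with $\ell$ linear in $(x,y)$. Hence $q_0$ and $q_2$ are \emph{tangent} at two points, and the local multiplicity of the system there is $4$. This gives $25-(4+4+2\cdot 4)=9$. A count that assumed transversality everywhere would give $13$, so you cannot simply ``remove'' the pole contributions without this analysis or an equivalent exact multiplicity computation.

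Your fallback, a saturation over $\K_D$ in ``reduced coordinates,'' would be rigorous if carried out, but as stated it is not justified. You would compute over a purely transcendental field in fewer parameters and transfer the answer via \Cref{lem:extension-of-scalars}, as in the proof of \Cref{thm:degree-two-blow-up}. For that, the retained parameters must be algebraically independent over $\mathbb{Q}$. This fails for the entries of $P_{C_1}$, for instance, since $\det P_{C_1}=0$. The discrete symmetry $1\leftrightarrow 2$ does not reduce the number of generic parameters. What legitimately does is the congruence action of $\mathrm{GL}_2$ on the twin blocks $\{1,2\}$ and $\{3,4\}$, which preserves the model. As written, your proposal establishes only $\MLdeg(D)\geq 9$.
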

\begin{proof}[Proof sketch]
See \Cref{sec:code} for a detailed proof. Consider the chordal graph $H=D+\set{15,25}$. Solving for these fill edges automatically completes the matrix variables for edges $36,46$. Let $x=\Sigma_{15}, y=\Sigma_{25}$. The unique clique tree on $H$ has vertices $C_{1}=\set{1,2,3,4,5}, C_{2}=\set{1,2,5,6}$ and edge label $S=\set{1,2,5}$. Chordal formula says $\partial_{z}\log \det \Sigma = \partial_{z}\log(q_{1}q_{2}/q_{0})$ where $q_{0}=\det \Sigma_{S}, q_{1}=\det \Sigma_{C_{1}}, q_{2}=\det \Sigma_{C_{2}}$ and $z\in \set{x,y}$. $\partial_{z}\log \det\Sigma$ is then a rational form with numerator a quintic polynomial for both $z=x,y$ and denominator $q_{0}q_{1}q_{2}$. The numerators form a system of two quintic equations whence they have $25$ solutions by Bezout. But one must also discount the roots of the denominator because the $\MLdeg$ is the number of roots away from the poles. One can show that $q_{0},q_{1}$ intersect at four distinct points. Same is true for $q_{1},q_{2}$. But $q_{0},q_{2}$ intersect at two distinct points with multiplicity two each. There is also no triple intersection. These intersection numbers are valid for generic data and can be proven simply by one witness computed in \texttt{M2}, because these statements are Zariski open conditions in the data space. Now at a point where only one $q_{i}$ vanishes, the numerator is nonzero, for generic data. The only other case is to remove roots of the numerator where two of $q_{i}$'s vanish. For the pair $(q_{0},q_{1})$ or $(q_{1},q_{2})$, since there are four distinct points, the numerator also vanishes at these four distinct points with multiplicity one each. For the pair $(q_{0},q_{2})$, there are two distinct roots with multiplicity two at each root. For both these roots, Taylor expansion yields that numerators of both $d_{z}\det\Sigma$ for $z=x,y$ have the smallest degree terms as quadratics, so they look quadratic locally. Further these leading terms are coprime. So the local intersection multiplicity for the numerator is four locally at each tangency point. Therefore the number of roots of the numerators, which occur at the poles, is $4\cdot 1 + 4\cdot 1 + 2 \cdot 4 = 16$ whence $\MLdeg(D)=25-16=9$.
\end{proof}

\end{example}

Towards \Cref{qs:special-vertex}, we prove blow-up invariance for the four types of vertices mentioned in Table \eqref{eq:table-of-blowups}. First let's recall \Cref{def:clique-blow-up} of a clique blow-up. $\tilde G = \bl_{w}^{q}(G)$ is said to be the blow-up of $G$ at a vertex $w \in V(G)$ by the clique $K_{q}$ (which are new vertices, so $K_{q}\cap V(G) = \varnothing$) if $w$ is replaced with $K_{q}$ and every vertex of $\cN_{w,G}$ is connected to every vertex of $K_{q}$.

\subsection{Clique blow-up at a simplicial vertex}
$v\in V(G)$ is simplicial if $G[\cN_{v,G}]$ is a clique.
\begin{proposition}
\label{prop:simplicial-blow-up}
If $v$ is simplicial vertex of $G$, then $\MLdeg(\bl_v^q(G))=\MLdeg(G)=\MLdeg(G\smallsetminus v) \fa q\geq 1$.
\end{proposition}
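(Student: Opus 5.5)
The plan is to write both $G$ and $\bl_v^q(G)$ as clique sums along the clique $N\sett\cN_{v,G}$ and then apply \Cref{lem:mult-clique-sum} twice. Write $N$ with $t=\abs N$, which is a clique of $G$ because $v$ is simplicial.

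The first step handles $G$ itself. Since $v$ is adjacent exactly to $N$, the graph $G$ is the clique sum
\[G=(G\smallsetminus v)\cup_{N}K_{N\cup\set v}.\]
The common vertex set of the two pieces is $N$, which induces a clique in both: in $G\smallsetminus v$ because $v$ is simplicial, and in $K_{N\cup\set v}$ trivially. The edge sets also match, because every edge of $G$ either avoids $v$ or joins $v$ to $N$. Complete graphs are chordal, so $\MLdeg(K_{N\cup\set v})=1$. \Cref{lem:mult-clique-sum} then gives $\MLdeg(G)=\MLdeg(G\smallsetminus v)\cdot 1$.

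The second step handles the blow-up. In $\tilde G=\bl_v^q(G)$, the set $Q_v\cup N$ is a clique: $Q_v$ is a clique, $N$ is a clique, and every twin is joined to every vertex of $N$. No vertex of $Q_v$ has a neighbor outside $Q_v\cup N$. Hence
\[\tilde G=(G\smallsetminus v)\cup_N K_{N\sqcup Q_v},\]
since $\tilde G\smallsetminus Q_v=G\smallsetminus v$ by \Cref{def:clique-blow-up}. Applying \Cref{lem:mult-clique-sum} again gives $\MLdeg(\tilde G)=\MLdeg(G\smallsetminus v)\MLdeg(K_{q+t})=\MLdeg(G\smallsetminus v)$. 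Combining the two steps proves both equalities.

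The only delicate point is the degenerate case $N=\varnothing$, i.e.\ $v$ isolated. There the clique sum is along $\varnothing$, which is a disjoint union. By the remark on connectedness and the convention that a clique sum along $\varnothing$ is a disjoint union, \Cref{lem:mult-clique-sum} still applies with $t=0$. If one prefers not to rely on that, the disjoint-union case follows directly: the likelihood algebra of a disjoint union is the tensor product of the component algebras, because the cross entries of $\Sigma$ are forced to be zero. I expect no real obstacle; the proof reduces to checking that the decompositions are genuine clique sums, with edge sets matching exactly, which is immediate from simpliciality.
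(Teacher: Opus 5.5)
Your proposal is correct and follows essentially the same route as the paper: both write $G=(G\smallsetminus v)\cup_N K_{N\cup\set{v}}$ and $\bl_v^q(G)=(G\smallsetminus v)\cup_N K_{N\cup Q_v}$, then apply \Cref{lem:mult-clique-sum} twice using that complete graphs have ML degree one. Your separate treatment of the isolated-vertex case $N=\varnothing$ is a harmless addition, since the paper already covers it through its convention that a clique sum along $\varnothing$ is a disjoint union.
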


\begin{proof}
Recall that \Cref{lem:mult-clique-sum} says $\MLdeg(G_{1}\cup_{K} G_{2}) = \MLdeg(G_{1})\MLdeg(G_{2})$. 

Let $N=\cN_{v,G}$. So $N$ is a clique. Note that $G = (G\smallsetminus v) \cup_{N} K_{N\cup \set v}$. Therefore $G$ is a clique sum of a complete graph and $G\smallsetminus v$ along the clique $N$ whence $\MLdeg(G\smallsetminus v) = \MLdeg(G)$. 

Let $K_{q} = \set{v_1,\ldots,v_q}$ be the clique replacing $v$. The vertices $N\cup K_{q}$ span a complete graph in $\bl_v^q(G)$ and $N$ is a clique in $G\smallsetminus v$. Therefore $\bl_v^q(G) = (G\smallsetminus v)\cup_{N} K_{N\cup K_{q}}$ whence $\MLdeg(\bl_v^q(G)) = \MLdeg(G\smallsetminus v)\MLdeg(K_{N\cup K_{q}}) = \MLdeg(G\smallsetminus v)$.
\end{proof}

\Cref{fig:simplicialpic} shows a conceptual proof-by-picture of the above argument.
\begin{figure}[H]\label{fig:simplicialpic}
\centering
\begin{tikzpicture}[
  every node/.style={font=\small},
  vertex/.style={circle, draw, fill=white, inner sep=1.5pt, minimum size=6pt},
  bluevertex/.style={circle, draw=blue!60!black, fill=blue!15, inner sep=1.5pt, minimum size=6pt},
  redvertex/.style={circle, draw=red!70!black, fill=red!15, inner sep=1.5pt, minimum size=6pt},
  graphbox/.style={draw, rounded corners, fill=gray!8, minimum width=4.2cm, minimum height=2.35cm},
  cliquebox/.style={draw=red!70!black, rounded corners, fill=red!5},
  nbox/.style={draw=blue!60!black, rounded corners, fill=blue!5},
  >=stealth
]


\node[graphbox] (Gminus) at (0,0) {};
\node at (0,-1.55) {${\color{blue}G\smallsetminus v}$};

\node[bluevertex] (a1) at (-0.75,0.35) {};
\node[bluevertex] (a2) at (0,0.75) {};
\node[bluevertex] (a3) at (0.75,0.35) {};

\draw[blue!60!black, thick] (a1)--(a2)--(a3)--(a1);

\node[vertex] (u1) at (-1.25,-0.45) {};
\node[vertex] (u2) at (1.25,-0.45) {};
\draw[gray] (u1)--(a1);
\draw[gray] (u2)--(a3);

\node[redvertex] (v) at (0,1.75) {$v$};
\draw[red!70!black, thick] (v)--(a1);
\draw[red!70!black, thick] (v)--(a2);
\draw[red!70!black, thick] (v)--(a3);

\begin{scope}[on background layer]
  \node[fit=(a1)(a2)(a3), nbox, inner sep=5pt] (Nvleft) {};
  \node[fit=(v)(a1)(a2)(a3), cliquebox, inner sep=7pt] (Kleft) {};
\end{scope}

\node[blue!60!black, below=-8pt of Nvleft] {$\cN_v$ is a clique};
\node[red!70!black, above=2pt of Kleft] {$K_{\cN_v\cup\{v\}}$};

\node at (0,-2.20) {
$\displaystyle
G=(G\smallsetminus v)\cup_{\cN_v}K_{\cN_{v}\cup\{v\}}
$
};


\node at (3.8,0.25) {$\leadsto$};
\node[align=center] at (3.8,-0.55) {replace $v$ by\\a clique $K_{q}$};


\begin{scope}[xshift=7.6cm]

\node[graphbox] (GminusR) at (0,0) {};
\node at (0,-1.55) {${\color{blue}G\smallsetminus v} \simeq \bl_{v}^{q}(G) \smallsetminus {\color{red}K_{q}}$};

\node[bluevertex] (b1) at (-0.75,0.35) {};
\node[bluevertex] (b2) at (0,0.75) {};
\node[bluevertex] (b3) at (0.75,0.35) {};

\draw[blue!60!black, thick] (b1)--(b2)--(b3)--(b1);

\node[vertex] (w1) at (-1.25,-0.45) {};
\node[vertex] (w2) at (1.25,-0.45) {};
\draw[gray] (w1)--(b1);
\draw[gray] (w2)--(b3);

\node[redvertex] (q1) at (-0.55,1.75) {$v_1$};
\node[redvertex] (q2) at (0.55,1.75) {$v_2$};
\node[redvertex] (q3) at (0,2.35) {$v_q$};

\draw[red!70!black, thick] (q1)--(q2)--(q3)--(q1);

\foreach \q in {q1,q2,q3} {
  \foreach \b in {b1,b2,b3} {
    \draw[red!70!black, thick] (\q)--(\b);
  }
}

\begin{scope}[on background layer]
  \node[fit=(b1)(b2)(b3), nbox, inner sep=5pt] (Nvright) {};
  \node[fit=(q1)(q2)(q3)(b1)(b2)(b3), cliquebox, inner sep=7pt] (Kright) {};
\end{scope}

\node[blue!60!black, below=-8pt of Nvright] {$\cN_{v}$ is the same clique};
\node[red!70!black, above=2pt of Kright] {$K_{\cN_v\cup K_{q}}$};

\node at (0,-2.20) {
$\displaystyle
\bl_v^q(G)
=
(G\smallsetminus v)\cup_{\cN_v}K_{\cN_v\cup K_{q}}
$
};

\end{scope}

\end{tikzpicture}

\caption{A simplicial vertex $v$ satisfies that $\cN_v$ is a clique. Replacing $v$ by a clique $K_{q}$ only replaces the complete summand $K_{\cN_v\cup\{v\}}$ by $K_{\cN_v\cup K_{q}}$, while the gluing clique $\cN_v$ is unchanged. }
\end{figure}


\begin{remark}
\Cref{lem:mult-clique-sum}, used above, is false if one were to replace `gluing along clique' with `gluing along a chordal graph'. Indeed consider the example, $H$ as the path $1-2-3$, $G_{1}$ as the cyclic graph on vertices $\set{a,1,2,3}$ with edges $a-1-2-3-a$, $G_{2}$ as the cyclic graph on $\set{b,1,2,3}$ with edges $b-1-2-3-b$. Then gluing $G_{1},G_{2}$ along the chordal graph (in fact, a tree) $H$ gives $K_{2,3}$ with bipartition $\set{1,3},\set{a,2,b}$. But $\MLdeg(K_{2,3}) = 7$ whereas $\MLdeg(G_{1})\MLdeg(G_{2}) = 25$.
\[\begin{tikzcd}[column sep=2.25em]
	& 1 & 1 &&& \textcolor{rgb,255:red,92;green,92;blue,214}{1} & \\
	a & 2 & 2 & b & \textcolor{rgb,255:red,214;green,92;blue,92}{a} & \textcolor{rgb,255:red,214;green,92;blue,92}{2} & \textcolor{rgb,255:red,214;green,92;blue,92}{b} \\
	& 3 & 3 &&& \textcolor{rgb,255:red,92;green,92;blue,214}{3}
	\arrow[color={rgb,255:red,117;green,117;blue,117}, between={0.2}{0.8}, Rightarrow, scaling nfold=3, squiggly, no head, from=1-2, to=1-3]
	\arrow[no head, from=1-2, to=2-1]
	\arrow[no head, from=1-2, to=2-2]
	\arrow[no head, from=1-3, to=2-4]
	\arrow[no head, from=1-6, to=2-6]
	\arrow[no head, from=1-6, to=2-7]
	\arrow[no head, from=2-1, to=3-2]
	\arrow[color={rgb,255:red,117;green,117;blue,117}, between={0.2}{0.8}, Rightarrow, scaling nfold=3, squiggly, no head, from=2-2, to=2-3]
	\arrow[no head, from=2-2, to=3-2]
	\arrow[no head, from=2-3, to=1-3]
	\arrow[squiggly, from=2-4, to=2-5]
	\arrow[no head, from=2-4, to=3-3]
	\arrow[no head, from=2-5, to=1-6]
	\arrow[no head, from=2-5, to=3-6]
	\arrow[no head, from=2-6, to=3-6]
	\arrow[color={rgb,255:red,117;green,117;blue,117}, between={0.2}{0.8}, Rightarrow, scaling nfold=3, squiggly, no head, from=3-2, to=3-3]
	\arrow[no head, from=3-3, to=2-3]
	\arrow[no head, from=3-6, to=2-7]
\end{tikzcd}\]
\end{remark}

\subsection{Clique blow-up at a universal vertex}
The next case is different. A universal vertex need not be simplicial, since its neighborhood need not be a clique. Nevertheless, clique blow-up at a universal vertex also preserves ML degree by \Cref{thm:suspension}. Here we discuss about blowup at a universal vertex, that is, a vertex which is adjacent to every other vertex.

\begin{proposition}\label{prop:universal-blow-up}
If $v$ is a universal vertex in $G$, then $\MLdeg(\bl_v^q(G))=\MLdeg(G) = \MLdeg(G\smallsetminus v)\fa q\ge 1$.
\end{proposition}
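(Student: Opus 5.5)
The plan is to reduce everything to the suspension identity, \Cref{thm:suspension}, by recognizing both $G$ and its blow-up as joins with a complete graph. Set $G' \sett G\smallsetminus v$, the induced subgraph on $V(G)\smallsetminus\set{v}$. Since $v$ is universal, $\cN_{v,G}=V(G')$. By \Cref{def:join}, $G'*K_{1}$ has the vertices of $G'$ together with one new vertex adjacent to all of them. Identifying that vertex with $v$ gives $G = G'*K_{1}$, and \Cref{thm:suspension} with $t=1$ yields $\MLdeg(G)=\MLdeg(G\smallsetminus v)$.

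For the blow-up, write $\tilde G=\bl_{v}^{q}(G)$ and go through \Cref{def:clique-blow-up}. The vertex set is $V(G')\sqcup Q_{v}$. The edge set has three parts:
\begin{itemize}
\item $E(G\smallsetminus\set v)=E(G')$;
\item $\binom{Q_v}{2}$, which makes $Q_{v}$ a copy of $K_{q}$;
\item $\set{az\st a\in \cN_{v,G}, z\in Q_{v}}$, which is exactly the set of all cross edges between $V(G')$ and $Q_{v}$, because $\cN_{v,G}=V(G')$.
\end{itemize}
This is precisely the edge set of $G'*K_{q}$ in \Cref{def:join}, so $\tilde G = G'*K_{q}$. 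Applying \Cref{thm:suspension} with $t=q$ gives $\MLdeg(\tilde G)=\MLdeg(G')=\MLdeg(G\smallsetminus v)$. Combined with the first paragraph, this gives $\MLdeg(\bl_v^q(G))=\MLdeg(G)=\MLdeg(G\smallsetminus v)$ for all $q\ge1$.

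The argument has no serious obstacle. The only points needing care are two bookkeeping checks. First, the identification $\tilde G = G'*K_{q}$ must be an equality of edge sets, not merely a containment; this holds because universality makes the third edge part the complete set of cross edges. Second, the degenerate case where $G$ is the single vertex $v$ must be handled. Then $G'$ is empty and $\tilde G=K_{q}$ is chordal, so all three ML degrees equal $1$. Alternatively, one can simply note that the suspension identity covers this case with the convention $\MLdeg(\varnothing)=1$.
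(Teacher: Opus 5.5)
Your proposal is correct and follows the paper's proof essentially verbatim: both identify $G=(G\smallsetminus v)*K_1$ and $\bl_v^q(G)=(G\smallsetminus v)*K_q$, then apply \Cref{thm:suspension} with $t=1$ and $t=q$. Your explicit check that the edge sets coincide, and your handling of the single-vertex case, are harmless refinements that the paper leaves implicit.
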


\begin{proof}
Since $v$ is universal, $G=(G\smallsetminus v)*K_1$, where $*$ denotes graph join. Replacing $v $ by a $q $-clique gives $\bl_v^q(G)=(G\smallsetminus v)*K_q$. \Cref{thm:suspension} says that adding $t$ universal vertices preserves the ML degree. So $\MLdeg((G\smallsetminus v)*K_t)=\MLdeg(G\smallsetminus v)$. Since $G\simeq (G\smallsetminus v)*K_1 $, the above equality for $t=1,q$ implies $\MLdeg(\bl_v^q(G))=\MLdeg(G) = \MLdeg(G\smallsetminus v).$
\end{proof}

\subsection{Clique blow-up away from the fill edge}
The above two results do not explain \dsb s. However, the next result explains, among other examples, why replacing the vertices on the $n$-side of $K_{2,n}$ by arbitrary cliques does not change its ML degree. Let $G$ be a one fill chordal graph with $H=G+uv$ chordal. Then a vertex $w$ of $G$ is said to be away from the fill edge if $w\notin \set{u,v}$.

\begin{proposition}\label{prop:onefill-blow-up}
Let $H$ be chordal, $uv\in E(H) $, and set $G=H\smallsetminus uv$. Let $w\in V(H)\smallsetminus\set{u,v}$. Then $\MLdeg(\bl_w^q(G)) = \MLdeg(G)\fa q\ge 1$.
\end{proposition}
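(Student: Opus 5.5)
The plan is to reduce everything to \Cref{thm:one-fill}. We show that $\tilde G\sett\bl_w^q(G)$ is again one-fill chordal, with fill edge $uv$, and that the two counts $\abs{\cC_{uv}}$ and $\abs{\cS_{uv}}$ do not change. Set $\tilde H\sett\bl_w^q(H)$. Since $w\notin\set{u,v}$, the blow-up commutes with deleting $uv$: the edge sets in \Cref{def:clique-blow-up} only alter edges at $w$. Hence $\tilde G=\tilde H\smallsetminus uv$, and $uv\in E(\tilde H)$.

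The first step is that $\tilde H$ is chordal. Let $\tilde C$ be an induced cycle of length at least four in $\tilde H$. Two adjacent twins on $\tilde C$ would have the same neighbours outside $Q_w$, so the cycle would have a chord unless it is a triangle. If $\tilde C$ contains two nonadjacent twins, they are in fact adjacent, which is again a chord. So $\tilde C$ contains at most one twin, and replacing that twin by $w$ gives an induced cycle of the same length in $H$, a contradiction.

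The second step is to describe $\cC(\tilde H)$ and a clique tree. The maximal cliques of $\tilde H$ are exactly the cliques $C$ of $\cC(H)$ not containing $w$, together with $(C\smallsetminus\set w)\cup Q_w$ for those containing $w$. Indeed, any clique meeting $Q_w$ extends to all of $Q_w$, and collapsing $Q_w$ to $w$ is a bijection on these cliques that preserves inclusion. Transport $T$ along this bijection to a tree $\tilde T$. The running intersection property for a twin $z$ is the property for $w$ in $T$, and for every other vertex it is unchanged. Each separator $\tilde C\cap\tilde C'$ is the image of $C\cap C'$ under $w\mapsto Q_w$.

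The third step is the count. A bag contains $\set{u,v}$ if and only if its image does, since $u,v\neq w$. Distinct separator bags of $T$ remain distinct in $\tilde T$, because the map $w\mapsto Q_w$ on subsets is injective. Therefore $\abs{\cC_{uv}(\tilde H)}=\abs{\cC_{uv}(H)}$ and $\abs{\cS_{uv}(\tilde H,\tilde T)}=\abs{\cS_{uv}(H,T)}$. Applying \ref{eq:off} to both graphs then gives equality of the ML degrees.

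The main obstacle is the chordality check in the first step, specifically the case where an induced cycle passes through $Q_w$. The twin argument above handles it. The bijection of maximal cliques is routine once one notes that twins are indistinguishable from one another.
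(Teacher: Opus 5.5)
Your proposal is correct and follows essentially the same route as the paper: transport the maximal cliques and a clique tree $T$ of $H$ along $w\mapsto Q_w$, note that bags containing $\set{u,v}$ and distinct separator bags correspond bijectively, and apply the one-fill formula to $\tilde H\smallsetminus uv=\bl_w^q(G)$. Your explicit induced-cycle check that $\tilde H$ is chordal, and your justification that blow-up commutes with deleting $uv$, are small additions that the paper leaves implicit (there, chordality follows from the transported clique tree $\tilde T$).
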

\begin{proof}
Let $Q\simeq K_q$ replace $w$, and define a map on subsets of $V(H)$ by
\[
\phi(A) = \begin{cases} 
(A\smallsetminus\set{w})\cup Q,&w\in A,\\ 
A,& w\notin A.
\end{cases}
\]
Let $\tilde H = \bl_{w}^{q}(H)$. If $C$ is a maximal clique of $H$, then $\phi(C)$ is a maximal clique of $\tilde H$. Indeed, any strict clique extension of $\phi(C)$ would, after replacing $Q$ by $w$, give a strict clique extension of $C$ in $H$. Conversely, replacing the vertices of $Q$ in any clique of $\tilde H$ by $w$ gives a clique of $H$. Any maximal clique of $\tilde H$ that meets $Q$ contains all of $Q$, because the vertices of $Q$ are pairwise adjacent true twins. Consequently every maximal clique of $\tilde H$ is $\phi(C)$ for a unique maximal clique $C$ of $H$. Uniqueness follows because membership in $Q$ records membership of $w$, after which the original subset is recovered. Thus $\cC(\tilde H)=\set{\phi(C)\st C\in\cC(H)}$.

Fix a clique tree $T$ of $H$. Replace each node $C$ of $T$ by $\phi(C)$ to obtain the tree $\tilde T$. We show that $\tilde T$ is a clique tree of $\tilde H$. The running-intersection property holds for every old vertex because it held in $T$. For each $z\in Q$, the nodes containing $z$ are precisely those whose original labels contained $w$, and these nodes also form a connected subtree. Hence the resulting tree $\tilde T$ is a clique tree of $\tilde H$. Moreover, $\phi(C)\cap\phi(C')=\phi(C\cap C')$ for every edge $CC'$ of $T$. The map $\phi$ is injective on subsets, so it also preserves the number of distinct separator bags.

Because $w\notin\set{u,v}$, a clique or separator bag contains both $u$ and $v$ if and only if its image under $\phi$ does. Therefore $\abs{\cC_{uv}(\tilde H)} = \abs{\cC_{uv}(H)}, \abs{\cS_{uv}(\tilde H,\tilde T)} = \abs{\cS_{uv}(H,T)}$. Applying  \Cref{thm:one-fill} to $G=H\smallsetminus uv, \tilde G = \bl_{w}^{q}(G) = \tilde H \smallsetminus uv$ proves the claim.
\end{proof}

\subsection{Clique blow-up at a degree-two vertex}
We next move on to developing the blow-up invariance of a degree-two vertex. This is fundamentally different in the sense that this is not an implication of the \ref{eq:off} or results from \cite{SturmfelsUhler2010}. Suppose a vertex $w$ has exactly two neighbors $a,b$. Blowing up $w$ by a clique introduces many additional observed covariance entries, but this clique interacts with the rest of the graph only through $a,b$. Assume $ab$ is not an edge. Then the only changes in the equations of our chosen chordal completion of the blown up graph is possible at the entry $ab$ because all the new observations introduced in the blow-up are not fill variables. Recalling the Schur formula of the entry at a special edge from \Cref{sec:specialization-one-fill}, the only effective generic data are given by the ``$c$'' and the denominator polynomial ``$D(x)$'' from there. It thus reduces to the fact that the corresponding likelihood with generic data $c,D(c)$ dictate the ML degree of each blowup because each blowup is simply an extension of scalars. We need the following form of this latter fact, which will be used to prove the main result using the strategy above.

\begin{lemma}\label{lem:extension-of-scalars}
Let $\iota:K\to L$ be an embedding of fields. Suppose $h\in R\sett K[(z_{i})_{i\in[n]}]\smallsetminus \set{0}$ and $\mathcal P = \set{P_{1},\cdots,P_{m}} \subseteq R[h^{-1}]$. Define $\mathcal A \sett R[h^{-1}]/(\mathcal P), \mathcal A_{L} \sett L[(z_{i})_{i\in[n]}, \iota(h)^{-1}]/(\iota(\mathcal P))$. Then there is a natural $L-$algebra isomorphism $\mathcal A_{L} \cong L\otimes_{K,\iota}\mathcal A$. If either algebra is finite dimensional over their respective fields, we have $\dim_{K}\mathcal A = \dim_{L}\mathcal A_{L}$.
\end{lemma}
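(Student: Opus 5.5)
The plan is to realize both algebras as quotients of polynomial rings by explicit ideals, and then use that tensor product commutes with localization and with quotients. Write $R_L \sett L[(z_i)_{i\in[n]}]$ and extend $\iota$ to the ring map $\iota:R\to R_L$ that is the identity on the variables. The first step is the standard identification $L\otimes_{K,\iota}R\cong R_L$, which holds because the monomials form a basis of $R$ over $K$ and of $R_L$ over $L$.

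Next I would handle the localization. Since $R[h^{-1}]\cong R[t]/(th-1)$, right exactness of $L\otimes_K-$ gives
\[
L\otimes_K R[h^{-1}]\cong R_L[t]/(t\,\iota(h)-1)\cong R_L[\iota(h)^{-1}].
\]
Note that $\iota(h)\neq0$ because $\iota$ is injective on coefficients, so this localization is a genuine nonzero ring. For the quotient, tensor the exact sequence $R[h^{-1}]^m\xrightarrow{(P_j)}R[h^{-1}]\to\mathcal A\to0$ with $L$. Right exactness yields the exact sequence $R_L[\iota(h)^{-1}]^m\xrightarrow{(\iota(P_j))}R_L[\iota(h)^{-1}]\to L\otimes_K\mathcal A\to0$, and the cokernel of the first map is exactly $\mathcal A_L$. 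Each of these isomorphisms is induced by $\lambda\otimes g\mapsto\lambda\iota(g)$, so the resulting map $L\otimes_{K,\iota}\mathcal A\to\mathcal A_L$ is natural and is a map of $L$-algebras, not merely of modules.

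For the dimension statement, I would use that every $K$-vector space is free. If $\{e_\alpha\}$ is a $K$-basis of $\mathcal A$, then $\{1\otimes e_\alpha\}$ is an $L$-basis of $L\otimes_K\mathcal A$, so $\dim_L\mathcal A_L=\dim_K\mathcal A$ as cardinals. In particular, one side is finite if and only if the other is, which covers the hypothesis ``if either algebra is finite dimensional.''

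The only mildly delicate step is checking that the explicit generators correspond under the localization isomorphism. This amounts to confirming that $1\otimes P_j$ maps to $\iota(P_j)$, where $P_j=g_j/h^{k_j}$ is sent to $\iota(g_j)/\iota(h)^{k_j}$. I expect no real obstacle beyond this bookkeeping.
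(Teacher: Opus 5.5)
Your proposal is correct and follows the same route as the paper. Both identify $L\otimes_{K,\iota}R[h^{-1}]$ with $L[(z_i)_{i\in[n]},\iota(h)^{-1}]$, pass to the quotient by the images of the $P_j$, and compare dimensions through a $K$-basis $\mathcal E$ of $\mathcal A$, using $L\otimes_{K,\iota}\mathcal A\cong\bigoplus_{e\in\mathcal E}L$. Your use of $R[h^{-1}]\cong R[t]/(th-1)$ and right exactness of $L\otimes_K-$ just spells out the step the paper states as ``extension of scalars commutes with forming polynomial rings and with localization.''
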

\begin{proof} Extension of scalars commutes with forming polynomial rings and with localization. Consequently, $L\otimes_{K,\iota}R[h^{-1}] \cong L[(z_{i})_{i\in[n]},\iota(h)^{-1}]$. $\iota(h)\ne 0$ because $\iota$ is a monomorphism of fields. Passing to the quotient by the images of $P_1,\cdots,P_m$ gives
\[
L\otimes_{K,\iota}\mathcal A \cong \frac{L[(z_{i})_{i\in[n]},\iota(h)^{-1}]}{(\iota(P_{j}):j\in[m])} = \mathcal A_{L}.\]
Let $\mathcal E$ be a $K-$basis of $\mathcal A$. Note, we don't yet assume $\mathcal E$ is finite. As $L-$vector spaces, we have
\[
L\otimes_{K,\iota}\mathcal A \cong L\otimes_{K,\iota} \left(\bigoplus_{e\in\mathcal E}Ke\right) \cong \bigoplus_{e\in\mathcal E}L.
\]
Thus $\mathcal A_{L}$ is finite-dimensional precisely when $\mathcal E$ is finite, and its dimension is then $\abs {\mathcal E}=\dim_{K}\mathcal A$.
\end{proof}

\begin{theorem}
\label{thm:degree-two-blow-up}
If $w\in V(G)$ has $\deg w = 2$, then $\MLdeg(\bl_w^q(G))=\MLdeg(G)\fa q\geq1$.
\end{theorem}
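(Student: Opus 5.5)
We split according to whether the two neighbours $a,b$ of $w$ are adjacent. If $ab\in E(G)$, then $\cN_{w,G}=\set{a,b}$ is a clique, so $w$ is simplicial and \Cref{prop:simplicial-blow-up} gives the claim. So the real work is the case $ab\notin E(G)$. Write $G'=G\smallsetminus w$ and $\tilde G=\bl_w^q(G)$ with the clique $Q$ replacing $w$. In both graphs, the block $Q$ (with $Q=\set{w}$ when $q=1$) is attached to the rest only through $a$ and $b$. The plan is to show that both likelihood algebras arise by extension of scalars from one common algebra, which depends on the data of the block only through three effective parameters.

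\textbf{Step 1 (eliminating the block).} Put $x=\Sigma_{ab}$. We work with a covariance matrix $\Sigma$ for $\tilde G$ whose inverse $K$ satisfies $K_{Qz}=0$ for all $z\notin Q\cup\set{a,b}$. Taking the Schur complement of $K$ with respect to the $Q$-block, the marginal concentration $(\Sigma_{V\smallsetminus Q})^{-1}$ agrees with $K_{V\smallsetminus Q}$ except at the $aa$, $bb$ and $ab$ entries.

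Conversely, the unobserved entries $\Sigma_{Qz}$ are forced rationally. They are given by the regression $\Sigma_{Qz}=\Sigma_{Q,\set{a,b}}\Sigma_{\set{a,b}}^{-1}\Sigma_{\set{a,b},z}$. This is a conditional independence of $Q$ and $z$ given $\set{a,b}$, in the spirit of the clique-separator formula for the chordal piece on $Q\cup\set{a,b}$, whose cliques are $Q\cup\set{a}$ and $Q\cup\set{b}$ with separator $Q$.

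Imitating the proof of \Cref{lem:algebraic-fill-reduction}, with explicit inverse maps $\phi,\psi$, I would prove the following algebra isomorphism. The likelihood algebra $\mathcal A_{\tilde G}$ is isomorphic to the algebra of the system on $V\smallsetminus Q$ consisting of the following conditions:
\begin{itemize}
\item $(\Sigma'^{-1})_{ij}=0$ for all $ij\in\cE_{G'}\smallsetminus\set{ab}$;
\item a single modified equation at $ab$, together with modified $aa$ and $bb$ relations.
\end{itemize}
By \Cref{lem:local-schur}, the correction term at $ab$ has the form $\lambda(c-x)/D(x)$ with $D(x)=\alpha\beta-(x-c)^2$. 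Here $\alpha,\beta,c$ come from the Schur complement \eqref{eq:schur-terms} of $y_{\set{a,b}\cup Q}$ with respect to $T=\Sigma_Q$. The system is localized at $\det\Sigma'$, at $D$ and at $\det\Sigma_Q$.

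\textbf{Step 2 (common generic model).} The resulting system involves the data of the block only through $(\alpha,\beta,c)$; the remaining coefficients are the observed entries $s$ of $G'$. I then let $K_0=\K_{G'}(\alpha,\beta,c)$ with $\alpha,\beta,c$ new indeterminates, and define $\mathcal A_0$ as the corresponding localized quotient over $K_0$. There are field embeddings $\iota_q:K_0\to\K_{\tilde G}$, one for each $q$, sending $\alpha,\beta,c$ to their Schur-complement expressions. To see that these are embeddings, it suffices that the map $(s_{Qa},s_{Qb},s_{QQ})\mapsto(\alpha,\beta,c)$ is dominant while the other data stay free. This already holds for $q=1$ by varying $s_{wa},s_{wb},s_{ww}$: there $c=s_{wa}s_{wb}/s_{ww}$, $\alpha=s_{aa}-s_{wa}^2/s_{ww}$, and similarly for $\beta$, and these can take independent generic values once $s_{aa},s_{bb}$ are also varied. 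For larger $q$, restrict to data where $Q$ is a scaled identity, which reduces to the $q=1$ case.

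By Step 1, $\mathcal A_{\tilde G}\cong \K_{\tilde G}\otimes_{K_0,\iota_q}\mathcal A_0$. \Cref{lem:extension-of-scalars} then gives $\dim\mathcal A_{\tilde G}=\dim_{K_0}\mathcal A_0$ for every $q\ge1$, including $q=1$, that is, for $G$ itself. Finiteness comes from \Cref{lem:likelihood-projection-generically-finite}, so the common value is $\MLdeg(G)$.

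\textbf{Main obstacle.} The step I expect to be hardest is the isomorphism of Step 1, and in particular the localization bookkeeping. I must check that all the unobserved entries $\Sigma_{Qz}$ and the internal structure of $Q$ are eliminated without loss. I also need that the localizing elements $\det\Sigma$, $\det\Sigma_Q$ and $D$ become units on both sides, in the manner of \Cref{lem:Delta-unit}. Finally, I need that the modified $aa$ and $bb$ relations genuinely depend only on $(\alpha,\beta,c)$ and the data of $G'$. I would first verify this last point through the explicit block-inverse formula of \Cref{lem:local-schur} applied to the $(\set{a,b}\cup Q)$-block.
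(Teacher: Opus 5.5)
Your overall strategy matches the paper's: eliminate the block $Q$, then realize every $\bl_w^q(G)$ as a scalar extension of one common algebra. Your Step 1 corresponds to the paper's choice of chordal completion $K_{V_0}\cup_{\{a,b\}}K_{\{a,b\}\cup Q}$. There the isomorphism comes directly from the algebraic fill reduction lemma, so the bookkeeping you worry about is already handled.

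\textbf{The gap is in Step 2, exactly at $q=1$, the only case that links back to $G$.} The base field $\mathbb K_{G'}$ contains $s_{aa}$ and $s_{bb}$. Over it, your three parameters are algebraically dependent when $q=1$. With $c=s_{wa}s_{wb}/s_{ww}$, $\alpha=s_{aa}-s_{wa}^2/s_{ww}$ and $\beta=s_{bb}-s_{wb}^2/s_{ww}$, one gets
\[
(s_{aa}-\alpha)(s_{bb}-\beta)=c^2 .
\]
Your justification ``once $s_{aa},s_{bb}$ are also varied'' is the illegitimate step: $s_{aa}$ and $s_{bb}$ lie in the base field, so they are not free relative to it. Hence $\iota_1$ kills the nonzero element $(s_{aa}-\alpha)(s_{bb}-\beta)-c^2$ and does not define a field embedding of $\mathbb K_{G'}(\alpha,\beta,c)$. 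The extension-of-scalars lemma therefore cannot be applied at $q=1$. For $q\geq 2$ the quantities $r^\top T^{-1}r$, $u^\top T^{-1}u$, $r^\top T^{-1}u$ are independent, so as written your argument only compares the blow-ups with $q\geq2$ among themselves.

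\textbf{Where the problem comes from.} There are no ``modified $aa$ and $bb$ relations.'' Diagonal positions are observed, not nonedges, so no likelihood equation sits there. The fact that $K_{aa}\neq((\Sigma_{V_0})^{-1})_{aa}$ imposes nothing. The only modified equation is the one at $ab$:
\[
((\Sigma_{V_0})^{-1})_{ab}+\frac{c-x}{\alpha\beta-(x-c)^2}+\frac{x}{s_{aa}s_{bb}-x^2}=0 .
\]
The last term is the separator contribution $-(\Sigma_{\{a,b\}}^{-1})_{ab}$, which you dropped. This is harmless, since it involves only data of $G'$, but it means you must also localize at $s_{aa}s_{bb}-x^2$. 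The key point is that the block enters only through $c$ and the product $\alpha\beta=D(c)$.

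\textbf{The fix, which is what the paper does.} Take $K_0=\mathbb K_{G'}(c,\delta)$ with $\delta\mapsto\alpha\beta$. Then check that $c$ and $\alpha\beta$ are algebraically independent over $\mathbb K_{G'}$ already for $q=1$. For example, with $T=1$ you get $c=ru$ and $\alpha\beta=(s_{aa}-r^2)(s_{bb}-u^2)$. Their Jacobian in $(r,u)$ is $2(s_{bb}r^2-s_{aa}u^2)\not\equiv 0$. With this two-parameter field, $\iota_q$ is an embedding for every $q\geq1$, and the rest of your argument goes through.
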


\begin{proof}
Let $G,w$ be as given with $S \sett \cN_{w,G}=\set{a,b}$. If $ab\in E(G)$ then $w$ is simplicial, and the result follows from \Cref{prop:simplicial-blow-up}. Henceforth assume $ab\notin E(G)$. 

Take $V_{0} = V(G) \smallsetminus \set w, G_{0} = G[V_{0}] = G\smallsetminus w$ and $G_{q}\sett \bl_{w}^{q}(G)$. Let $Q_{q}=K_{q}$ be the part that replaces $w$ in $G$. If $V_{0}=S$ then $G$ is a chordal graph whence $G_{q} = K_{Q_{q}\cup \set{a}} \cup_{Q_{q}} K_{Q_{q}\cup\set b}$ whence $\MLdeg(G_{q}) = 1 =\MLdeg(G)$. Therefore assume $\abs{V_{0}}\ge 3$. We choose the chordal completion $H_{q} = K_{V_{0}} \cup_{S} K_{S\cup Q_{q}}$ of $G_{q}$. Then $F = {{V_{0}}\choose 2}\smallsetminus E(G_{0})$ determine the fill edges, that is, $H_{q}=G_{q}+F$. Clearly, $G_{0}+F = K_{V_{0}} \eqqcolon H_{0}$. In particular, $ab\in F$. This means that we have the same set of fill edges $F$ for every $q$. Clearly $\cC(H_{q}) = \set{V_{0}, S\cup Q_{q}}$ and thus the only clique tree $T_{q}$ of $H_{q}$ has $\cS(H_{q},T_{q}) = \set{\set{a,b}} = \set S$. By the chordal formula (and \Cref{lem:clique-gluing-identity}), \begin{equation}\label{eq:full-chordal}K^{H_{q}} = \lb K^{H_{0}}\rb^{0} + \lb \Sigma_{S\cup Q_{q}}^{-1}\rb^{0} - \lb\Sigma_{S}^{-1}\rb^{0}\end{equation} where the matrices have been embedded to the size of $V(G_{q})\times V(G_{q})$ padded with zeroes. We thus want to localize (for later) at $\Delta_{H_{q}} = \det (K^{H_{0}})^{-1}\det \Sigma_{S\cup Q_{q}} \det \Sigma_{S}$.

Introduce a variable $x_{f}$ for every $f\in F$ and write $x=x_{ab}$ for the special $ab\in F$. Let $\eta = (s_{ij})_{ij\in \cO_{G_{0}}}$ and $k\sett \C(\eta)$. $\eta$ comprises the observed entries in $G_{0}$. Let $\Sigma(x_{F})$ be the symmetric covariance matrix on $G_{0}$. Then \[\Sigma_{S}(x) = \begin{bmatrix}s_{aa} & x\\x&s_{bb}\end{bmatrix},\qquad \Sigma_{S\cup Q_{q}}(x) = \begin{bmatrix}
s_{aa} & x & r^{\top}\\
x & s_{bb} & u^{\top}\\
r & u & T
\end{bmatrix}.
\]
The entries of $r,u,T$ are precisely the additional observed entries of $G_{q}$. The data field of $G_{q}$ is $L_{q}\sett K_{G_{q}} = k\lp\set{r_{i}}_{i\in [q]} \cup \set{u_{i}}_{i\in [q]} \cup \set{t_{ij}}_{1\le i\le j \le q}\rp$. Consider the quantities \[\tilde c \sett r^{\top}T^{-1}u \in L_{q}, \qquad \tilde D(x)\sett (s_{aa}-r^{\top}T^{-1}r)(s_{bb}-u^{\top}T^{-1}u) - (x-\tilde c)^{2}\in L_{q}[x].\] By the Schur complement determinant formula discussed earlier, $\det \Sigma_{S\cup Q_{q}} = \det T\cdot \tilde D(x)$. The Schur complement inverse formula gives $(\Sigma_{S\cup Q_{q}}^{-1})_{ab} = -(x-\tilde c)/\tilde D(x)$. The quantities $\tilde c$ and $\tilde D(\tilde c)$ are algebraically independent over $k$. Indeed, specializing $T=I_q$, $r=(r_1,0,\ldots,0)^\top$, and $u=(u_1,0,\ldots,0)^\top$ gives $\tilde c = r_1u_1, \tilde D(\tilde c) = (s_{aa}-r_1^2)(s_{bb}-u_1^2)$.
Their Jacobian determinant with respect to $r_1,u_1$ is
$2(s_{bb}r_1^2-s_{aa}u_1^2)$, which is not identically zero.
Thus this specialization defines a dominant map to the two effective
parameters, proving the claimed algebraic independence for every
$q\ge1$.  Consequently there is a $k-$embedding $\iota_{q}: k(c,D) \hookrightarrow L_{q}$ given by $c\mapsto \tilde c, D\mapsto \tilde D(\tilde c)$.

The likelihood ideal for any $G_{q}$ is cut out by the following rational functions in $L_{q}(x_{F})$ for $f\in F$:
\[p_{f}(x_{F}) = \begin{cases}
(\Sigma_{H_{0}}(x_{F})^{-1})_{f} & \text{ if } f\ne ab\\
(\Sigma_{H_{0}}(x_{F})^{-1})_{ab} - (x-\tilde c)/\tilde D(x) + x/(s_{aa}s_{bb}-x^{2}) & \text{ if } f = ab
\end{cases}.
\] 
This directly comes from \eqref{eq:full-chordal} and the fact that $F\cap {{S\cup Q_{q}}\choose 2} = \set{ab}$. Therefore take $h = \det \Sigma_{H_{0}}(x_{F}) \cdot (D-(x-c)^{2})\cdot (s_{aa}s_{bb}-x^{2}) \in k(c,D)[x_{F}]$ and define \[
P_{f}(x_{F}) \sett \begin{cases}
(\Sigma_{H_{0}}(x_{F})^{-1})_{f} & \text{ if } f\ne ab\\
(\Sigma_{H_{0}}(x_{F})^{-1})_{ab} - (x- c)/(D-(x-c)^{2}) + x/(s_{aa}s_{bb}-x^{2}) & \text{ if } f = ab
\end{cases} 
\]
in $k(c,D)[x_{F},h^{-1}]$. Take $\mathcal A = k(c,D)[x_{F},h^{-1}]/(P_{f}: f\in F)$. Note that $\Sigma_{H_{0}}(x_{F})$ has nonzero determinant in $k[x_{F}]$. Also $D-(x-c)^{2}$ and $s_{aa}s_{bb}-x^{2}$ are nonzero in $k(c,D)[x_{F}]$. Therefore $h\ne 0$. Therefore the localization, and hence the algebra $\mathcal A$, is well defined. Note that $p_{f} = \iota_{q}(P_{f})\fa f\in F$ and $\iota_{q}(h) = \det \Sigma_{H_{0}}(x_{F}) \cdot \tilde D(x)\cdot (s_{aa}s_{bb}-x^{2})$ which is just $\Delta_{H_{q}}$ up to units in $L_{q}$. The generic likelihood algebra of $G_{q}$ is $\mathcal A_{q} = L_{q}[x_{F},\Delta_{H_{q}}^{-1}]/(p_{f}:f\in F)$. By \Cref{lem:extension-of-scalars}, \[
\dim_{k(c,D)}\mathcal A = \dim_{L_{q}}\mathcal A_{q} = \MLdeg(G_{q}).
\]
Since $\MLdeg(G_{q}) = \dim_{k(c,D)}\mathcal A$ and $k(c,D),\mathcal A$ are independent of $q$ it must happen that $\MLdeg(G_{q}) = \MLdeg(G_{1}) = \MLdeg(G)$.
\end{proof}

\begin{corollary}\label{cor:2-regular}
Let $v_1,\ldots,v_m$ be pairwise nonadjacent degree-two vertices of a graph $G$, and let $q_1,\ldots,q_m\geq1$. Then $\MLdeg\left( \bl_{v_m}^{q_m}\circ\cdots\circ\bl_{v_1}^{q_1}(G) \right)=\MLdeg(G)$. In particular, if $G=C_n$ with $n\geq3$, then $\MLdeg \lp \bl_{v_m}^{q_m}\circ\cdots\circ\bl_{v_1}^{q_1}(C_n) \rp = (n-3)2^{n-2}+1.$
\end{corollary}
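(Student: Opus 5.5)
The plan is to apply \Cref{thm:degree-two-blow-up} $m$ times, once for each blow-up in turn, and check before each application that the vertex being blown up still has degree two. Set $G^{(0)}=G$ and $G^{(j)}=\bl_{v_j}^{q_j}(G^{(j-1)})$ for $j=1,\ldots,m$. The claim to establish by induction on $j$ is the following: for every $i>j$, the vertex $v_i$ is still a vertex of $G^{(j)}$, and its neighbourhood there is exactly $\cN_{v_i,G}$. In particular $\deg_{G^{(j)}}(v_i)=2$.

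For the inductive step, note from \Cref{def:clique-blow-up} that blowing up $v_j$ deletes $v_j$ and adds the clique $Q_{v_j}$. The only new edges lie inside $Q_{v_j}$ or join $Q_{v_j}$ to $\cN_{v_j,G^{(j-1)}}$. By the inductive hypothesis $\cN_{v_j,G^{(j-1)}}=\cN_{v_j,G}$, and since the $v_i$ are pairwise nonadjacent in $G$, this set contains no $v_i$ with $i>j$. So no new edge touches $v_i$. The only deleted edges are those at $v_j$, and $v_j$ is not a neighbour of $v_i$. Hence $\cN_{v_i,G^{(j)}}=\cN_{v_i,G}$, which has size two. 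For the same reason every $v_i$ with $i>j$ survives as a vertex, because only $v_j$ is removed.

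With the claim in hand, \Cref{thm:degree-two-blow-up} applies to $G^{(j-1)}$ at $v_j$ and gives $\MLdeg(G^{(j)})=\MLdeg(G^{(j-1)})$ for every $j$. Chaining these equalities gives the first statement.

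For $G=C_n$, every vertex has degree two, so any pairwise nonadjacent choice of $v_1,\ldots,v_m$ is admissible. The degree is then $\MLdeg(C_n)$, which equals $(n-3)2^{n-2}+1$ by the \ref{eq:dvdss} for $n\ge4$. For $n=3$, the graph $C_3=K_3$ is chordal with degree $1$, and the formula also gives $0\cdot 2+1=1$. Equivalently, in that case the blown-up graph is complete and hence chordal.

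The only real obstacle is the bookkeeping of neighbourhoods under successive blow-ups. The pairwise nonadjacency hypothesis is exactly what keeps each later $v_i$ at degree two: if $v_i$ were adjacent to $v_j$, blowing up $v_j$ would raise $\deg v_i$ to $q_j+1$.
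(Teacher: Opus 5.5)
Your proposal is correct and follows the same route as the paper. Both apply \Cref{thm:degree-two-blow-up} successively, use pairwise nonadjacency to show that each later $v_i$ keeps its original two neighbours (so it still has degree two when its turn comes), and then finish the cycle case with the \ref{eq:dvdss} for $n\ge4$ and the complete-graph observation for $n=3$. Your explicit induction on neighbourhoods, and the check that each $v_i$ survives as a vertex, just spell out the bookkeeping the paper states in one line.
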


\begin{proof}
Apply \Cref{thm:degree-two-blow-up} successively. If $i\neq j$, then $v_i$ is not adjacent to $v_j$. Replacing $v_j$ by a clique therefore adds no new neighbor to $v_i$ and removes no neighbor of $v_i$, hence $v_i$ remains of degree two after any earlier blow-ups. This proves the first identity by induction. The second follows from the \ref{eq:dvdss} which says  $\MLdeg(C_n)=(n-3)2^{n-2}+1$ for $n\geq4$; for $n=3$, the graph is complete and both sides equal $1$.
\end{proof}

\subsection{Conclusion and further questions}\label{sec:questions}

This work shows that chordal completion can retain substantial information about the likelihood geometry of a nonchordal Gaussian graphical model. When a single added edge makes the graph chordal, the likelihood equations reduce to one rational equation, and its degree is determined entirely by the maximal cliques and distinct clique-tree separators containing that edge. For several fill edges, the interaction graph identifies the parts of the reduced likelihood system that genuinely depend on one another: its connected components factor the likelihood algebra, and isolated fill edges yield a product of the corresponding one-edge degrees. The clique-blow-up results further show that maximum likelihood degree is insensitive to several natural local enlargements of a graph, although the six-vertex counterexample demonstrates that such invariance is not universal. Together, these results suggest that the algebraic complexity of likelihood estimation is governed less by the total size of a chordal completion than by the local pattern in which its fill edges occur among cliques and separators. A natural next step is therefore to understand the likelihood algebras associated with connected components of the interaction graph, particularly for two or three interacting fill edges, and to determine whether these local systems can support practical exact or numerical methods for models admitting sparse chordal completions.

Here are some open questions.

\begin{question}[Interacting components]\label{qs:interacting-edge}
Let $H$ be chordal, let $T$ be a clique tree, and let $F\subseteq E(H)$ have connected interaction graph $\Gamma_T(F)$. Can the length of
\[
 K^H_f(x_F)=0,\qquad f\in F,
\]
away from the clique and separator determinants, be computed from the bags containing the edges of $F$?
\end{question}

For one fill edge, \Cref{thm:one-fill} gives the answer. The graph $K_4\smallsetminus\{12,34\}\cong C_4$ shows that two interacting edges can already contribute degree $5$.

\begin{question}[Two interacting fill edges]
Find ML degree formulae when exactly two fill edges interact in a chordal completion.
\end{question}

\begin{question}[Choice of chordal completion]
If $G$ has several chordal completions with few fill edges, how do the resulting interaction graphs and local systems compare? Is there a completion that minimizes the size of the largest interacting component, or otherwise minimizes the algebraic complexity of the component systems in \Cref{lem:component-factorization}?
\end{question}

\begin{question}[$K_{3,m}$ problem {\cite{Uhler_mle_ggm}}]\label{qs:k3m}
Find a closed formula or recurrence for $\MLdeg(K_{3,m})$.
\end{question}

\section{Acknowledgement}
ChatGPT 5.6 Plus was used for organizing the paper, language corrections and searching for relevant MSC codes. I also thank James Saunderson, Yong Sheng Soh and Oscar Leong for helpful comments.

\newpage
\nocite{M2}
\bibliography{references}
\bibliographystyle{siamplain}

\newpage
\appendix

\section{Supplementary lemmas}

\begin{lemma}[Dominance and generic finiteness of the likelihood projection]
\label{lem:likelihood-projection-generically-finite}
The coordinate projection $\pi_G:\cM_G^\circ\to\C^{\cO_G}$ is dominant and generically finite.
\end{lemma}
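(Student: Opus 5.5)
We need to prove that $\pi_G:\cM_G^\circ\to\C^{\cO_G}$, $\Sigma\mapsto(\Sigma_{ij})_{ij\in\cO_G}$, is dominant and generically finite. The plan is to reparametrize the source by concentration matrices and then use a single nonsingular point of the projection.

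First, identify $\cM_G^\circ$ with the open subset $U=\set{K\in\mathcal L_G\st \det K\neq0}$ of the affine space $\mathcal L_G\cong\C^{\cO_G}$ via $K\mapsto K^{-1}$. This is an isomorphism of varieties, because inversion is a regular involution on invertible symmetric matrices. Hence $\cM_G^\circ$ is irreducible of dimension $\abs{\cO_G}$, the same as the target. The composite map is
\[
\Phi:U\to\C^{\cO_G},\qquad K\mapsto \lp (K^{-1})_{ij}\rp_{ij\in\cO_G}.
\]
It suffices to show that $\Phi$ is dominant. Indeed, a dominant morphism between irreducible varieties of equal dimension is generically finite: by the fiber dimension theorem, the generic fiber has dimension $\dim U-\dim\C^{\cO_G}=0$.

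For dominance, I would use the standard fact that a morphism of irreducible varieties in characteristic zero whose differential is surjective at some point is dominant. Equivalently, nonvanishing of the Jacobian determinant at one point forces the image to contain an open set. The natural point is $K=I$. The differential of $K\mapsto K^{-1}$ at $I$ is $\dot K\mapsto-\dot K$, so $d\Phi_I(\dot K)=-(\dot K_{ij})_{ij\in\cO_G}$ for $\dot K\in\mathcal L_G$. Since $\mathcal L_G$ is exactly the space of symmetric matrices supported on $\cO_G$, this map is, up to sign, the identity $\C^{\cO_G}\to\C^{\cO_G}$, and hence an isomorphism. The Jacobian determinant of $\Phi$ is therefore a rational function that does not vanish at $I$. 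So $\Phi$ is étale on a nonempty open set $U'\ni I$, and in particular quasi-finite there. Then $\overline{\Phi(U')}$ has dimension $\abs{\cO_G}$, so it equals the target, which gives dominance.

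The main obstacle is bookkeeping rather than substance. The first point is to justify the use of the dimension count. Alternatively, I can avoid it entirely: étaleness at a point implies that $\Phi$ is open near that point in the analytic topology. The image then contains a Euclidean open set, and such a set is Zariski dense in the irreducible target. The second point is to make sure that \emph{generically finite} is stated in the sense the paper uses. The ML degree is read off as the length of the generic fiber, equivalently $[\C(U):\C(\C^{\cO_G})]$, and this degree is finite because dominance plus equal dimension makes the function field extension finite. I would phrase the conclusion this way, via Chevalley's theorem, so that it matches the algebra $\mathcal A_G$ used in \eqref{eq:likelihoodequations}.
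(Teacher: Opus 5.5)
Your proposal is correct and follows the paper's own proof. You pass to concentration coordinates $K\mapsto \pi_G(K^{-1})$ on the invertible part of $\mathcal L_G$ and note that the differential at $K=I$ is $-\mathrm{id}$ on $\C^{\cO_G}$, which gives dominance; equality of dimensions together with the fiber-dimension theorem then gives generic finiteness. Your added justification for passing from full rank at a single point to dominance (étaleness near $I$, or analytic openness) supplies a step the paper leaves implicit.
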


\begin{proof}
In concentration coordinates, $\pi_G$ is represented by
\[L_G^\times\to\C^{\cO_G}, \qquad K\longmapsto\pi_G(K^{-1}).\]
Its differential at $K=I$ is $L\mapsto-\pi_G(L).$ Under the natural coordinate identification $L_G\cong\C^{\cO_G}$, this is the negative identity map. Therefore it has rank $\abs{\cO_G}$, proving that $\pi_G$ is dominant. Since its source and target are irreducible varieties of the same dimension $\abs{\cO_G}$, the fiber-dimension theorem implies that $\pi_G$ is generically finite.
\end{proof}

\begin{lemma}[Gluing identity]\label{lem:clique-gluing-identity}
Let $A,S,B$ be pairwise disjoint index sets. Let
\[\Sigma_1=\begin{bmatrix}P&U\\U^\top&R\end{bmatrix}_{A,S}, \qquad \Sigma_2=\begin{bmatrix}R&V\\V^\top&Q\end{bmatrix}_{S,B}\]
be invertible symmetric matrices with $R$ invertible. Then
\begin{equation}\label{eq:glued-covariance}
\hat\Sigma=
\begin{bmatrix}
P&U&UR^{-1}V\\
U^\top&R&V\\
V^\top R^{-1}U^\top&V^\top&Q
\end{bmatrix}_{A,S,B}
\end{equation}
is invertible and satisfies
\begin{align}
\hat\Sigma^{-1} &=\lb\Sigma_1^{-1}\rb^{0}+\lb \Sigma_2^{-1}\rb^{0}-\lb R^{-1}\rb^{0}, \label{eq:glued-inverse}\\
\det\hat\Sigma &=\frac{\det\Sigma_1\det\Sigma_2}{\det R}. \label{eq:glued-determinant}
\end{align}
Moreover, \eqref{eq:glued-covariance} is the unique nonsingular symmetric matrix whose restrictions to $A\cup S$ and $S\cup B$ are $\Sigma_1$ and $\Sigma_2$ respectively, with zero entries in the $A\times B$ block of the inverse.
\end{lemma}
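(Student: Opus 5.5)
We verify directly that $\hat\Sigma$ times the proposed inverse is the identity, and then handle the determinant and uniqueness separately. Write $M\sett\lb\Sigma_1^{-1}\rb^{0}+\lb \Sigma_2^{-1}\rb^{0}-\lb R^{-1}\rb^{0}$. The cleanest route is block Gaussian elimination relative to the middle block $R$. Define
\[
L=\begin{bmatrix} I & -UR^{-1} & 0\\ 0& I&0\\ 0&-V^\top R^{-1}& I\end{bmatrix}_{A,S,B}.
\]
A direct block multiplication gives
\[
L\hat\Sigma L^\top=\operatorname{diag}\lp P-UR^{-1}U^\top,\; R,\; Q-V^\top R^{-1}V\rp.
\]
The key point is that the $A\times B$ entry cancels precisely because $\hat\Sigma_{AB}=UR^{-1}V$.

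The outer diagonal blocks are the Schur complements $\Sigma_1/R$ and $\Sigma_2/R$. These are invertible since $\det\Sigma_i=\det R\cdot\det(\Sigma_i/R)$ and $\Sigma_1,\Sigma_2,R$ are invertible. Since $\det L=1$, we get
\[
\det\hat\Sigma=\det(\Sigma_1/R)\det R\det(\Sigma_2/R)=\frac{\det\Sigma_1\det\Sigma_2}{\det R}.
\]
This is \eqref{eq:glued-determinant}, and it also shows that $\hat\Sigma$ is invertible.

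For \eqref{eq:glued-inverse}, we compute $\hat\Sigma^{-1}=L^\top\operatorname{diag}(\cdots)^{-1}L$ explicitly. We then compare it with the standard Schur-complement expression for $\Sigma_1^{-1}$ (using $S_1\sett\Sigma_1/R$):
\[
\Sigma_1^{-1}=\begin{bmatrix} S_1^{-1} & -S_1^{-1}UR^{-1}\\ -R^{-1}U^\top S_1^{-1} & R^{-1}+R^{-1}U^\top S_1^{-1}UR^{-1}\end{bmatrix},
\]
together with the analogous expression for $\Sigma_2^{-1}$. The $AA$, $AS$, $SB$ and $BB$ blocks match immediately, and the $AB$ block of $\hat\Sigma^{-1}$ is zero. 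The $SS$ block equals
\[
R^{-1}+R^{-1}U^\top S_1^{-1}UR^{-1}+R^{-1}VS_2^{-1}V^\top R^{-1},
\]
which is the sum of the two $SS$ blocks minus $R^{-1}$. Alternatively, one can simply multiply $\hat\Sigma M$ block-row by block-row. This is the routine but bookkeeping-heavy step, and is the main place an error could slip in, so we would organize it via $L$ as above.

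For uniqueness, suppose $\Sigma'$ is nonsingular and symmetric, agrees with $\Sigma_1$ and $\Sigma_2$ on the two blocks, has $X=\Sigma'_{AB}$, and satisfies $(\Sigma'^{-1})_{AB}=0$. Applying the same $L$ shows that
\[
(L\Sigma'L^\top)_{AB}=X-UR^{-1}V,
\]
while the remaining off-diagonal blocks still vanish. The inverse of a matrix whose only off-diagonal blocks are $AB$ and $BA$ has $AB$ block $-S_1^{-1}(X-UR^{-1}V)(\cdot)^{-1}$, where $(\cdot)$ denotes the corresponding invertible Schur-complement factor. Conjugating back by $L^\top(\cdot)L$ does not change the $AB$ block, because the first and third block rows of $L$ only subtract multiples of the $S$-row. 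Hence $(\Sigma'^{-1})_{AB}=0$ forces $X=UR^{-1}V$, so $\Sigma'=\hat\Sigma$.

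Equivalently, one can argue via the conditional-independence identity. The $AB$ entry of the inverse vanishes if and only if the Schur complement of $\Sigma'_{S}$ in $\Sigma'_{A\cup B,\,A\cup B}$, namely the $2\times2$ block $\Sigma'_{A\cup B}-\Sigma'_{A\cup B,S}R^{-1}\Sigma'_{S,A\cup B}$, is block diagonal. Its off-diagonal block is exactly $X-UR^{-1}V$, which again gives $X=UR^{-1}V$.
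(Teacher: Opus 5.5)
Your proof is correct and follows essentially the same route as the paper: the congruence $L\hat\Sigma L^\top=\operatorname{diag}(\Sigma_1/R,\,R,\,\Sigma_2/R)$ is the paper's Schur complement of $R$ (in the ordering $A,B,S$) written as a block $LDL^\top$ factorization, and the determinant and inverse formulas are read off from it in the same way. Your uniqueness argument, especially the second version, is exactly the paper's; one small wording slip is that it is the Schur complement of $\Sigma'_S$ in $\Sigma'$, not in $\Sigma'_{A\cup B,A\cup B}$, although your displayed formula is right.
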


\begin{proof}
We denote the Schur complement of a block $X$ in matrix $M$ as $S_{X}(M)$. First note that $\Sigma_{1},\Sigma_{2}$ are invertible and the blocks $R$ within them are invertible. So  $S_{R}(\Sigma_{1}) = P-UR^{-1}U^{\top}, S_{R}(\Sigma_{2}) = Q-V^{\top}R^{-1}V$ are invertible. Now reorder the matrix $\hat\Sigma$ as $A,B,S$ and take the Schur complement of $R$ which is precisely
\begin{equation}\label{eq:diag-block}
S_{R}(\hat\Sigma) = \begin{bmatrix}
P-UR^{-1}U^\top&0\\
0&Q-V^\top R^{-1}V
\end{bmatrix} = 
\begin{bmatrix}
S_{R}(\Sigma_{1})\\&S_{R}(\Sigma_{2})
\end{bmatrix}.
\end{equation}
The determinant formula for a Schur complement says \[\det\hat\Sigma = \det R \cdot \det S_{R}(\hat\Sigma) = \det R\cdot \det S_{R}(\Sigma_{1}) \det S_{R}(\Sigma_{2}) = \frac{\det \Sigma_{1}\det\Sigma_{2}}{\det R}.\]

Now we use the Schur complement inverse formula. Order the entries of  $\lb\Sigma_{1}\rb^{0}, \lb\Sigma_{2}\rb^{0}, \hat\Sigma$ as $A,B,S$. Then the Schur complement inverses, with respect to $R$, for $[\Sigma_{1}]^{0}, [\Sigma_{2}]^{0}, \hat\Sigma^{-1}$ are
\begin{align*}
(\Sigma_{1}^{-1})^{0} &= \begin{bmatrix}
S_{R}(\Sigma_{1})^{-1}&0 & -S_{R}(\Sigma_{1})^{-1}UR^{-1}\\
0&0&0\\
-R^{-1}U^{\top}S_{R}(\Sigma_{1})^{-1}&0& R^{-1}+ R^{-1}U^{\top} S_{R}(\Sigma_{1})^{-1}UR^{-1}
\end{bmatrix}_{A,B,S}\\
(\Sigma_{2}^{-1})^{0} &= \begin{bmatrix}
0 & 0 & 0\\
0& S_{R}(\Sigma_{2})^{-1} & -S_{R}(\Sigma_{2})^{-1}V^{\top}R^{-1}\\
0& -R^{-1}VS_{R}(\Sigma_{2})^{-1} & R^{-1}+ R^{-1}V S_{R}(\Sigma_{2})^{-1}V^{\top}R^{-1}
\end{bmatrix}_{A,B,S}\\
\hat\Sigma^{-1} &= \begin{bmatrix}
S_{R}(\hat\Sigma)^{-1} & -S_{R}(\hat\Sigma)^{-1} \begin{bmatrix}U\\V^{\top}\end{bmatrix}R^{-1}\\
-R^{-1}\begin{bmatrix}U^{\top}&V\end{bmatrix}S_{R}(\hat\Sigma)^{-1} & R^{-1} + R^{-1}\begin{bmatrix}U^{\top}&V\end{bmatrix}S_{R}(\hat\Sigma)^{-1}\begin{bmatrix}U\\V^{\top}\end{bmatrix}R^{-1}
\end{bmatrix}_{A,B,S}.
\end{align*}
Recalling the special diagonal-block structure in \eqref{eq:diag-block} gives \eqref{eq:glued-inverse}.

For uniqueness, let $X$ be any common extension of $\Sigma_1$ and $\Sigma_2$, and write its unknown $A\times B$ block as $Z$. The Schur complement of $R$ in $X$, again ordered as $(A,B,S)$, has off-diagonal block $Z-UR^{-1}V$. The $A\times B$ block of $X^{-1}$ vanishes if and only if this Schur complement, and hence its inverse, is block diagonal. Thus it vanishes if and only if $Z=UR^{-1}V$.
\end{proof}

\begin{proof}[Proof of \Cref{prop:chordal-formula}]
We induct on $\abs{\cC(H)}$. The claim is immediate when $H$ has one maximal clique. Otherwise choose a leaf clique $C$ of $T$, let $C'$ be its neighbor, put $S=C\cap C'$, $A=C\smallsetminus S$, and $H'=H[V(H)\smallsetminus A]$. The running-intersection property implies that every vertex of $A$ occurs only in $C$. Hence there is no edge of $H$ between $A$ and $V(H')\smallsetminus S$, and deleting $C$ from $T$ gives a clique tree of $H'$.

By induction, the partial matrix on $H'$ has a unique completion $\hat\Sigma^{H'}$, whose inverse is the clique-separator sum for $T\smallsetminus C$. Glue $y_C$ and $\hat\Sigma^{H'}$ along their common nonsingular submatrix $y_S$. By \Cref{lem:clique-gluing-identity}, the resulting matrix is nonsingular, matches every specified entry of $y$, and its inverse is
\[\lp y_C^{-1}\rp^{0}+\lp\lp\hat\Sigma^{H'}\rp^{-1}\rp^{0} - \lp y_S^{-1}\rp^{0},\]
which is exactly \eqref{eq:chordal-formula}. It has zeros at all nonedges: the induction hypothesis handles $H'$, and the gluing identity gives a zero block between $A$ and $V(H')\smallsetminus S$.

Conversely, let $\tilde\Sigma\in\mathcal M_H^\circ$ have the prescribed entries, and put $B=V(H')\smallsetminus S$. Take the Schur complement of $\tilde\Sigma_S=y_S$ in $\tilde\Sigma$, ordered as $(A,B,S)$. Its inverse is the $(A\cup B)$-principal block of $\tilde\Sigma^{-1}$, whose $A\times B$ block is zero because there are no edges between $A$ and $B$. Hence that inverse, and therefore the Schur complement itself, is block diagonal. This forces
\[\tilde\Sigma_{A,B}=y_{A,S}y_S^{-1}\tilde\Sigma_{S,B}\]
and also shows that $\tilde\Sigma_{H'}$ is nonsingular.  Now, \Cref{lem:clique-gluing-identity} gives
\[\tilde\Sigma^{-1} = \lp y_C^{-1}\rp^{0} + \lp\tilde\Sigma_{H'}^{-1}\rp ^{0}-\lp y_S^{-1}\rp^{0}.\]
Consequently, $\tilde\Sigma_{H'}^{-1}$ vanishes at every nonedge of $H'$. Indeed, every such nonedge has at least one endpoint outside the clique $S$, where the other two displayed summands vanish. The induction hypothesis therefore forces $\tilde\Sigma_{H'}=\hat\Sigma^{H'}$, and then the cross-block identity forces $\tilde\Sigma = \hat\Sigma^H$. This proves uniqueness. Finally, iterating \eqref{eq:glued-determinant} and taking reciprocals gives \eqref{eq:chordal-det-factorization}
\end{proof}

\begin{lemma}[Saturation of the determinant critical ideal]
\label{lem:determinant-saturation-localization}
Let $\mathcal E$ be an index set, $K$ be a field of characteristic zero, $R=K[x_{ij}: ij\in \mathcal E],$ and $A$ be a symmetric matrix such that $A_{ij}=x_{ij}\fa ij\in \mathcal E$ and $A_{ij}\in K$ otherwise. Write
\[p=\det(A),\qquad J=(\cof_{ij}(A):(i,j)\in \mathcal E), \qquad I=J:p^\infty.\]
Then the natural $K$-algebra homomorphism $R/I\longrightarrow R[p^{-1}]/JR[p^{-1}]$ is an isomorphism.
\end{lemma}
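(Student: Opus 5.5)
The plan is to treat the comparison map in two halves: injectivity, which is formal, and surjectivity, which amounts to showing that $p$ is already a unit in $R/I$. For injectivity, I will identify the kernel of the composite $R\to R[p^{-1}]\to R[p^{-1}]/JR[p^{-1}]$. Since $R$ is a domain, $g$ lies in the kernel if and only if $g/1=j/p^{m}$ for some $j\in J$ and $m\ge0$. This is equivalent to $p^{m}g\in J$ for some $m$, that is, $g\in J:p^\infty=I$. Hence $R/I\hookrightarrow R[p^{-1}]/JR[p^{-1}]$. This repeats the contraction computation from the corollary after \Cref{lem:algebraic-fill-reduction}. Moreover, $R[p^{-1}]/JR[p^{-1}]\cong (R/I)[p^{-1}]$, so surjectivity holds exactly when the image of $p$ is invertible in $R/I$.

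To prove that $p$ is a unit modulo $I$, I will use the fact that the generators of $J$ are the partial derivatives of $p$. By cofactor expansion of the symmetric matrix $A$, $\partial p/\partial x_{ij}=2\cof_{ij}(A)$ for an off-diagonal index $ij\in\mathcal E$, and $\partial p/\partial x_{ii}=\cof_{ii}(A)$ if a diagonal index occurs. Since $\operatorname{char}K=0$, this gives $J=(\partial p/\partial x_e : e\in\mathcal E)$, so $V(J)$ is the critical locus of $p$ regarded as a polynomial on the affine space $\mathbb A^{\mathcal E}$. I will pass to the algebraic closure $\overline K$. This is harmless, because being a unit in $R/I$ can be tested after the faithfully flat base change to $\overline K$, and both $J$ and the saturation $J:p^\infty$ commute with this base change.

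The key step is that $p$ is constant on every irreducible component $Z$ of $V(J)\subseteq\mathbb A^{\mathcal E}_{\overline K}$. Every partial derivative of $p$ vanishes on $Z$, so on the smooth locus of $Z_{\mathrm{red}}$ the differential of $p|_Z$ is the restriction of $dp$ and is therefore zero. In characteristic zero, a regular function with zero differential on a smooth irreducible variety is constant, and by density $p|_Z\equiv c_Z\in\overline K$. This is the standard algebraic Sard argument, and it is the step I expect to need the most care. The fallback is to argue on the generic point: $p|_Z$ lies in the function field of $Z$, all of its derivations vanish, and hence it is algebraic over $\overline K$, so it lies in $\overline K$.

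To conclude, the primary decomposition of $J:p^\infty$ retains exactly the primary components of $J$ whose primes do not contain $p$. Hence the irreducible components of $V(I)$ are precisely those components $Z$ of $V(J)$ with $c_Z\neq0$. It follows that $p$ vanishes nowhere on $V(I)$. By the Nullstellensatz, $(I,p)=R$, so $p$ is invertible in $R/I$; equivalently, $p$ is a unit modulo $\sqrt I$ and therefore modulo $I$, because a unit plus a nilpotent is a unit. Combined with the first paragraph, this gives $R/I\cong (R/I)[p^{-1}]\cong R[p^{-1}]/JR[p^{-1}]$, and the isomorphism is the natural map.
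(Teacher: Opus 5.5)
Your proposal is correct and follows essentially the same route as the paper: you identify $J$ as the Jacobian ideal of $p$, reduce to showing that $p$ is a unit in $R/I$, and use the characteristic-zero fact that a function whose differential vanishes on an irreducible component of the critical locus is algebraic over the ground field (your generic-point fallback is exactly the paper's K\"ahler-differential argument). The only difference is bookkeeping. The paper stays over $K$, uses $I:p=I$ to see that $p$ lies in no minimal prime $\mathfrak q$ of $I$, inverts $\bar p$ in $R/\mathfrak q$ via its minimal polynomial, and concludes with maximal ideals; you instead base-change to $\overline K$ and phrase the same steps through primary decomposition and the Nullstellensatz.
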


\begin{proof}
Write $C_{ij} = \cof_{ij}(A)$. For a diagonal variable $x_{ii}$, Jacobi's cofactor formula gives $\partial p/\partial x_{ii}=C_{ii}$. For $i<j$, the variable $x_{ij}$ occurs in both the $(i,j)$ and $(j,i)$-entries of $A$. Since $A$ and its cofactor matrix are
symmetric,
\[\frac{\partial p}{\partial x_{ij}} = C_{ij}+C_{ji}=2C_{ij}.\]
Because $\operatorname{char}K=0$, it follows that $J= \lp \partial_{ij} p:ij\in \mathcal E\rp$. Thus $J$ is the Jacobian ideal of $p$ with respect to all the variables of $R$. By the  saturation-localization correspondence,
\[J:p^\infty=JR[p^{-1}]\cap R, \qquad JR[p^{-1}]=IR[p^{-1}]. \]
Consequently,
\begin{equation}\label{eq:isomlocal}
R[p^{-1}]/JR[p^{-1}] \cong (R/I)[p^{-1}].
\end{equation}

We will be done if we show that the image of $p$ is a unit in $R/I$. By the definition of saturation, $I:p=I,$ so $p$ is a nonzerodivisor on $R/I$. Since $R/I$ is Noetherian, every minimal prime of $R/I$ is associated. Hence $p$ belongs to no minimal prime over $I$.

Let $\mathfrak q$ be a minimal prime over $I$ and put $F=\operatorname{Frac}(R/\mathfrak q)$. Since $J\subseteq\mathfrak q$,  
\[d\overline p = \sum_{ij\in \mathcal E}\frac{\partial\overline p}{\partial x_{ij}}d x_{ij} = 0\] 
in $\Omega_{F/K}$, the vector space of K\"ahler differentials. By \cite[Theorem~$87(i)$, p.~$267$]{MatsumuraCA}, for an extension $F/K$ of fields of characteristic zero, if $u\in F$ then one has $du=0$ in $\Omega_{F/K}$ iff $u$ is algebraic over $K$. Thus $\overline p$ is algebraic over $K$. Since $p\notin\mathfrak q$, it is nonzero in $F$, and hence it is a unit already in $R/\mathfrak q$. In fact, this inverse is a polynomial in $\overline p$ with coefficients in $K$ (constructed using the minimal polynomial).

Therefore $p$ is a unit modulo every minimal prime over $I$. Every maximal ideal containing $I$ contains a minimal prime over $I$, so no such maximal ideal can contain $p$. Hence $I+(p)=R$, meaning that $p$ is a unit in $R/I$. It follows that
\[(R/I)[p^{-1}]=R/I.\]
Combining this with \eqref{eq:isomlocal} proves the result.
\end{proof}

\begin{lemma}[Clique--separator determinants are units on the generic fiber]
\label{lem:Delta-unit}
Let $H$ be chordal, let $F\subseteq E(H)$, and put $G=H\setminus F$. Let $\mathcal A_{G}\sett R_{G}/I_{G}$ be the generic likelihood algebra defined in \eqref{eq:likelihoodequations}. Identify the nonedge variables corresponding to $F$ with the variables $x_{f}$, and let
\[\Delta_{H}(x_{F})= \prod_{C\in\mathcal C(H)}\det y_{C}(x_{F}) \prod_{e\in E(T)}\det y_{B_{e}}(x_{F}).\]
Then the image of every factor $\det y_{C}(x_{F})$ and $\det y_{B_{e}}(x_{F})$ in $\mathcal A_G$ is a unit. In particular, the image of $\Delta_{H}(x_{F})$ is a unit in $\mathcal A_{G}$.
\end{lemma}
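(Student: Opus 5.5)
The plan is to use the same Kähler-differential mechanism as in \Cref{lem:determinant-saturation-localization}, applied now to each principal minor $\det y_B(x_F)$ for $B\in\cC(H)\cup\set{B_e\st e\in E(T)}$. Let $p=\det\Sigma_G(s,z)$ and $J=(\cof_{ij}(\Sigma_G):ij\in\cE_G)$. By \Cref{lem:determinant-saturation-localization}, $\mathcal A_G\cong R_G[p^{-1}]/J$. Hence $\mathcal A_G$ is a finite-dimensional $\K_G$-algebra (finiteness by \Cref{lem:likelihood-projection-generically-finite}) in which $p$ is a unit.

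Because $\mathcal A_G$ is Artinian, an element is a unit if and only if it lies in no maximal ideal. So it suffices to fix a maximal ideal $\mathfrak m$ of $\mathcal A_G$, with residue field $F'$ finite over $\K_G$, and show that $\det y_B\notin\mathfrak m$. The image of the point in $F'$ is a symmetric matrix $\bar\Sigma$ with entries $s$ on $\cO_G$, with $\det\bar\Sigma\ne0$, and with $(\bar\Sigma^{-1})_{ij}=0$ for all $ij\in\cE_G$. In other words, $\bar K=\bar\Sigma^{-1}$ lies in $\mathcal L_G\otimes F'$, and $\bar\Sigma$ is a critical point for the fixed generic data $s$.

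The key step is to show that a principal minor $\det\bar\Sigma_B$ cannot vanish at such a point when $s$ is generic. I would argue by dimension, using the parametrization by concentration matrices. The set
\[Z_B=\set{K\in\mathcal L_G\st \det K\ne0,\ \det((K^{-1})_B)=0}\]
is a proper hypersurface of the irreducible variety $\mathcal L_G^\times$, since $K=I$ lies outside it. It therefore has dimension $\abs{\cO_G}-1$. Its image under $\pi_G$, namely $K\mapsto\pi_G(K^{-1})$, has dimension at most $\abs{\cO_G}-1$, so its closure is a proper subvariety of $\C^{\cO_G}$. Generic data $s$ avoid this closure. Since $\K_G$ is the field of rational functions in the symbols $s$, the statement must be made scheme-theoretically: the generic fiber of $\pi_G$ restricted to $Z_B$ is empty. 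Equivalently, the ideal $I_G+(\det y_B)$ in $R_G$ defines the generic fiber of a non-dominant map, so its extension is the unit ideal over $\K_G$. This gives $\det y_B\notin\mathfrak m$ for every $\mathfrak m$, and hence $\det y_B$ is a unit.

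I expect the main obstacle to be exactly this passage from ``a non-dominant map has empty generic fiber'' to ``$I_G+(\det y_B)=R_G$ over $\K_G$''. My plan for it is to define the incidence variety over $\C[s]$, namely $\det\Sigma\ne0$, the cofactor equations, and $\det\Sigma_B=0$, and observe that it is isomorphic to $Z_B$ through $\Sigma\mapsto\Sigma^{-1}$. It then follows that its projection to $s$-space is not dominant, so tensoring with $\K_G=\C(s)$ kills its coordinate ring. Combining this with the localization description of $\mathcal A_G$ shows that $\mathcal A_G/(\det y_B)=0$. Multiplying over all bags then gives the statement for $\Delta_H$.
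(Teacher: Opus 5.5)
Your proposal is correct, but it reaches the conclusion by a different mechanism than the paper.

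\textbf{The paper's route.} It works with the domain $A_X=\C[X]$, where $X=\cM_G^\circ\cong L_G^\times$. Because $\pi_G$ is dominant and generically finite, the generic-fiber algebra $(A_Y\smallsetminus\set{0})^{-1}A_X\cong\mathcal A_G$ is a finite-dimensional domain over $\K_G$, hence a field. The element $\det_B$ is nonzero in $A_X$ (it equals $1$ at $I$), so it stays nonzero after localization and is therefore a unit.

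\textbf{Your route.} You show instead that $\mathcal A_G/(\det y_B)=0$. The locus $Z_B$ has dimension at most $\abs{\cO_G}-1<\dim Y$, so $\overline{\pi_G(Z_B)}\subseteq V(g)$ for some nonzero $g\in\C[s]$. By the Nullstellensatz in the finitely generated algebra $A_X$, some power $g^N$ lies in $(\det_B)A_X$. Since $g^N$ is a unit in $\K_G$, tensoring $A_X/(\det_B)$ with $\K_G$ gives zero. This Nullstellensatz step is exactly what closes the ``scheme-theoretic'' gap you flagged. Alternatively, use that the closure of the image of $\operatorname{Spec}M\to\operatorname{Spec}A_Y$ is $V(\ker(A_Y\to M))$.

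\textbf{What each buys.} Your argument needs only irreducibility of $X$ and the equality $\dim X=\dim Y$. It does not use dominance, generic finiteness, or the field structure of the generic fiber. The paper's argument yields the stronger byproduct that $\mathcal A_G$ is itself a field, i.e.\ the generic fiber is a single reduced point over $\K_G$.

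\textbf{Minor points.} Your opening reduction is never actually used: the announced K\"ahler-differential mechanism, the Artinian and maximal-ideal framing, and the residue-field points can all be dropped, since your final paragraph stands on its own. Also, $Z_B$ has dimension \emph{at most} $\abs{\cO_G}-1$, not exactly that. It can be empty, e.g.\ when $B=V(G)$, and the upper bound is all you need.
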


\begin{proof}
Let
\[X=\mathcal M_{G}^\circ,\qquad Y=\mathbb A^{\cO_{G}}, \]
and let $\pi_{G}:X\to Y$ be the coordinate projection. By \Cref{lem:likelihood-projection-generically-finite}, $\pi_{G}$ is dominant and generically finite. Matrix inversion identifies $X$ with a principal open subset of the affine space $L_G$. Consequently, $X$ is integral. Let $A_X=\mathbb C[X] \cong \C[k_{ij} : ij\in \mathcal \cO_{G}][(\det K)^{-1}]$ and $A_Y=\C[Y] \cong \C[s_{ij}: ij\in \mathcal \cO_{G}]$. Dominance of $\pi_G$ implies that the induced pullback homomorphism
\[\pi_{G}^{*}:A_{Y}\longrightarrow A_{X}, \qquad s_{ij}\mapsto (K^{-1})_{ij}\]
is injective. Recall that $\K_{G}=\operatorname{Frac}(A_{Y})$, so the coordinate algebra of the generic fiber is
\[A_{X}\otimes_{A_{Y}}\K_{G} = (A_{Y}\setminus\set{0})^{-1}A_{X}.\]
Since $A_{X}$ is a domain, this localization is also a domain. But $\pi_{G}$ is generically finite, whence $\operatorname{Frac}(A_{X})/\K_{G}$ is a finite extension. Consequently $(A_{Y}\setminus\set{0})^{-1}A_{X}$ is finite-dimensional over $\K_{G}$ (contained in $\operatorname{Frac}(A_{X})$). A finite-dimensional algebra over a field that is also a domain is a field. By \eqref{eq:likelihoodequations}, this generic-fiber algebra is naturally isomorphic to $\mathcal A_{G}$.

For each clique $C\in\cC(H)$, consider the regular function $\det_C:X\to \C, \det_{C}(\Sigma) = \det(\Sigma_{C})$. Since $\det_{C}(I)=1$, the function $\det_{C}$ is a nonzero element in the domain $A_{X}$. Since localization of a domain is injective, the image of $\det_{C}$ in $(A_{Y}\smallsetminus\set0)^{-1}A_{X} \cong \mathcal A_{G}$ is also nonzero and is therefore a unit. This image is precisely $\det y_{C}(x_{F})$.

The same argument applies to every separator $B_{e}$, because $\det(I_{B_{e}})=1$. Thus every clique and separator determinant is a unit in $\mathcal A_{G}$, and hence so is their product $\Delta_{H}(x_{F})$.
\end{proof}

\section{Proof of \Cref{prop:double-blow-up-degree}}\label{sec:code}

Let $D$ be the graph in \Cref{fig:counter-example-blow-up}. We first prove that the degree is generically $9$. The exact Macaulay2 calculation below is an independent check at one rational specialization.

Its covariance matrix looks like \[
\Sigma(x,y,\alpha,\beta) = \Sigma^{D} = \begin{bmatrix}
*&*&*&*&x&*\\
*&*&*&*&y&*\\
*&*&*&*&*&\alpha\\
*&*&*&*&*&\beta\\
x&y&*&*&*&*\\
*&*&\alpha&\beta&*&*
\end{bmatrix}
\]
where the $*$'s denote independent generic observed data, modulo the constraint that this matrix is symmetric. Recall Jacobi's formula for a symmetric matrix $A$ and $i\ne j$: \[\partial \det A/\partial A_{ij} = 2\det A\cdot (A^{-1})_{ij} = 2\cof_{ij}(A).\]
The likelihood equations are \[\cof_{15}\Sigma = \cof_{25}\Sigma = \cof_{36}\Sigma = \cof_{46}\Sigma = 0.\]
Further, if we treat $x,y$ as fixed numbers, that matrix corresponds to the graph $H\sett D\cup\set{15,25}$ which is chordal. Therefore, fixing $x,y$ (generically) determines $\alpha,\beta$. This is clear because the chordal formula gives $K^{H} = [\Sigma_{C_{1}}^{-1}]^{0} + [\Sigma_{C_{2}}^{-1}]^{0} - [\Sigma_{S}^{-1}]^{0}$ and none of the $C_{1}, C_{2}, S$ contain the edges $36, 46$ whence these entries in $K^{H}$ are automatically zero. Moreover, $\cC(H) = \set{C_{1},C_{2}}$ where $C_{1}=\set{1,2,3,4,5}, C_{2}=\set{1,2,5,6}$. The only possible tree structure is to join $C_{1},C_{2}$ with separator $S=C_{1}\cap C_{2} = \set{1,2,5}$. The chordal formula gives $\det \Sigma^{H} = \det \Sigma_{C_{1}}\det \Sigma_{C_{2}}/\det\Sigma_{S}$. Take $q_{1}(x,y)\sett \det \Sigma_{C_{1}}, q_{2}(x,y)\sett \det \Sigma_{C_{2}}, q_{0}(x,y) \sett \det \Sigma_{S}$. Observe that these are quadratic in $x,y$ and are independent of $\alpha,\beta$.  This reduces the equations to $\cof_{15}\Sigma = \cof_{25}\Sigma=0$. Chordal equation gives $K^{H}_{f} = (\Sigma_{C_{1}}^{-1})_{f}+(\Sigma_{C_{2}}^{-1})_{f}-(\Sigma_{S}^{-1})_{f} = 0$ for $f\in\set{15,25}$. Jacobi's formula gives \[\partial_{x}\log(q_{1}q_{2}/q_{0}) = \partial_{y}\log(q_{1}q_{2}/q_{0}) = 0.\]
This is because, for example, $2(\Sigma_{C_{1}}^{-1})_{15} = \frac1{q_{1}}\partial q_{1}/\partial (\Sigma_{C_{1}})_{15} = \partial_{x} q_{1}/q_{1} = \partial_{x}\log q_{1}$. This gives the following polynomial one-form:
\[
\Omega \sett F_{x}\d x+ F_{y} \d y = q_{0}q_{2}\d q_{1} + q_{0}q_{1}\d q_{2} - q_{1}q_{2}\d q_{0} = q_{0}q_{1}q_{2}~\d \log(q_{1}q_{2}/q_{0}).
\] 
We want to count the roots of $F_{x} = F_{y} = 0$ on the open set $\set{q_{0}q_{1}q_{2}\ne 0}$. First we claim that the mutual intersections of $q_{0},q_{1},q_{2}$ are the following:
\begin{itemize}
\item $q_{0}=q_{1}=0$ meet transversally at four distinct points with multiplicity one each;
\item $q_{1}=q_{2}=0$ meet transversally at four distinct points with multiplicity one each;
\item $q_{0}=q_{2}=0$ meet at two distinct points with multiplicity two each;
\item $q_{0}=q_{1}=q_{2}=0$ has no solution.
\end{itemize}

\begin{claim}
$q_{0}=q_{1}=0$ meet at four distinct points with multiplicity one each. Same is true for $q_{1}=q_{2}=0$.
\end{claim}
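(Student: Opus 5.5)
The plan is to write $q_0,q_1,q_2$ explicitly as affine conics in $(x,y)$ using Schur complements, and to reduce both assertions to a Zariski-open condition on the data that can be checked at one witness. Recall $x=\Sigma_{15}$ and $y=\Sigma_{25}$, and that these are the only unobserved entries in $C_1=\set{1,2,3,4,5}$, $S=\set{1,2,5}$ and $C_2=\set{1,2,5,6}$.

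\textbf{Step 1: explicit conics.} For each bag $B\in\set{S,C_1,C_2}$, let $W_B=B\smallsetminus\set{5}$, so that $W_S=\set{1,2}$, $W_{C_1}=\set{1,2,3,4}$ and $W_{C_2}=\set{1,2,6}$. The block $\Sigma_{W_B}$ is fully observed. The Schur complement determinant formula, as in \Cref{lem:local-schur} but with respect to $\Sigma_{W_B}$, gives
\[
q_B=\det\Sigma_{W_B}\lp s_{55}-\xi_B^\top \Sigma_{W_B}^{-1}\xi_B\rp .
\]
Here $\xi_B$ is the column of $\Sigma$ indexed by $W_B$ in column $5$. Its first two coordinates are $x,y$, and its remaining coordinates are observed constants. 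So each $q_B$ is an affine conic. Its quadratic part is $-\det\Sigma_{W_B}$ times the binary form defined by $M_B$, where $M_B$ is the $\set{1,2}\times\set{1,2}$ block of $\Sigma_{W_B}^{-1}$.

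\textbf{Step 2: reduction to an open condition.} Homogenize to $\mathbb P^2$. By Bezout, $q_0$ and $q_1$ meet in exactly four points counted with multiplicity, provided they have no common component. The points at infinity are the common isotropic directions of $M_S$ and $M_{C_1}$. If the binary forms $M_S$ and $M_{C_1}$ have nonzero resultant, then there are no intersections at infinity. In that case all four intersection points are affine, and they are distinct exactly when the discriminant of the zero-dimensional system is nonzero. Equivalently, every intersection is transversal, that is, the Jacobian of $(q_0,q_1)$ is nonvanishing there.

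Both the resultant and this discriminant are polynomial expressions in the observed data, after clearing powers of $\det\Sigma_{W_B}$. Hence their simultaneous nonvanishing is Zariski open, and as in \Cref{lem:generic-denominators} it suffices to exhibit one specialization where it holds. The same reasoning applies verbatim to the pair $(q_1,q_2)$, with $M_{C_2}$ in place of $M_S$.

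\textbf{Step 3: witness.} I take all observed entries in column $5$ other than $s_{55}$ to be zero, that is $\Sigma_{35}=\Sigma_{45}=\Sigma_{65}=0$, which kills the linear terms. I also take $\Sigma_{12}=I$. Then $q_0=s_{55}-x^2-y^2$.

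For $C_1$, I choose $\Sigma_{1234}$ block-structured so that $M_{C_1}=\operatorname{diag}(\lambda_1,\lambda_2)$ with $\lambda_1\ne\lambda_2$. For instance, couple $1$ only to $3$ and $2$ only to $4$, with distinct small couplings, which gives $\lambda_i=1/(1-t_i^2)$. Then $q_1$ is proportional to $s_{55}-\lambda_1x^2-\lambda_2y^2$. Two coaxial central conics $x^2+y^2=a$ and $\lambda_1x^2+\lambda_2y^2=a$, with $\lambda_1\ne\lambda_2$ both different from $1$, meet in the four points $(\pm x_0,\pm y_0)$. For suitable choices one has $x_0y_0\neq0$, so these four points are distinct. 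At each of them the Jacobian equals $4x_0y_0(\lambda_2-\lambda_1)\neq0$, and there is no common point at infinity. The analogous choice for $\Sigma_{126}$, coupling $6$ to $1$ and $2$ diagonally, handles the pair $(q_1,q_2)$. For that pair, $M_{C_1}$ and $M_{C_2}$ are both diagonal but with distinct eigenvalue pairs.

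\textbf{Main obstacle.} The delicate point is choosing the witness so that all conditions hold at once. The values $x_0^2$ and $y_0^2$ must be nonzero and the eigenvalue ratios must differ. In addition, the same specialization should later serve the other bullet points, where $q_0$ and $q_2$ become tangent. I would first try the diagonal coupling above with explicit rationals, and confirm the resultant and discriminant by a quick \texttt{M2} computation as the paper does.
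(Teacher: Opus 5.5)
Your proof is correct. It has the same skeleton as the paper's: write the claim as a Zariski-open condition on the data, exhibit one witness, and let B\'ezout cap the count at four. The criterion and the witness, however, differ.

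The paper uses the coordinate-dependent test $c_{y^2}(q_0)\,c_{y^2}(q_1)\,c_{x^4}(\operatorname{Res}_y(q_0,q_1))\,\operatorname{Disc}_x(\operatorname{Res}_y(q_0,q_1))\neq0$. This produces four common zeros with pairwise distinct $x$-coordinates. The paper certifies it at an unstructured integer witness in Macaulay2, and only for $(q_0,q_1)$.

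You use the intrinsic test: no common point at infinity, and transversality at every affine common zero. Your Schur normal form $q_B=\det\Sigma_{W_B}\,(s_{55}-\xi_B^\top\Sigma_{W_B}^{-1}\xi_B)$ lets you choose coaxial central conics, so everything can be checked by hand for both pairs. The difference is not cosmetic. Your symmetric witness fails the paper's test: the points $(\pm x_0,\pm y_0)$ share $x$-coordinates in pairs, so $\operatorname{Res}_y(q_0,q_1)$ is a perfect square in $x$.

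The price is that you must justify openness of your transversality condition, which you only gesture at with ``the discriminant of the zero-dimensional system''. It does hold. The bad locus is the image of a closed subset of $(\text{data})\times\mathbb P^2$ under a proper projection. Concretely, once there are no common points at infinity, it is the vanishing of the resultant of $Q_0$, $Q_1$ and the homogenized Jacobian determinant.

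Two small fixes are needed for the pair $(q_1,q_2)$.
\begin{enumerate}
\item Since $6$ is a single vertex, Sherman--Morrison shows that $M_{C_2}$ is a rank-one update of $M_S$; this is the same structure that makes $q_0$ and $q_2$ tangent. So with $\Sigma_{\set{1,2}}=I$, the matrix $M_{C_2}$ is diagonal only if $6$ is coupled to just one of $1,2$. In that case $M_{C_2}=\operatorname{diag}(\mu,1)$ with $\mu=s_{66}/(s_{66}-\Sigma_{16}^2)$.
\item ``Distinct eigenvalue pairs'' is not the right condition. You need $\lambda_1\neq\lambda_2\mu$, i.e.\ non-proportional forms; this gives both no common point at infinity and Jacobian $4xy(\lambda_1-\lambda_2\mu)\neq0$. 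You also need $\lambda_1\neq\mu$ and $\lambda_2\neq1$, so that $x_0^2,y_0^2\neq0$. This is what your own remark that ``the ratios must differ'' points to.
\end{enumerate}
For instance, $t_1=1/2$, $t_2=1/3$, $\Sigma_{16}=1$, $s_{66}=2$, $s_{55}=1$ give $\lambda_1=4/3$, $\lambda_2=9/8$, $\mu=2$, which works for both pairs.

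Finally, no single specialization needs to serve all the bullets. Each condition is open and nonempty on the irreducible data space, so separate witnesses suffice.
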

\begin{proof}
We show this only for $q_{0}=q_{1}=0$ and the other case follows similarly.
Recall that $q_{0} = \det \Sigma_{S} = \det \Sigma_{\set{1,2,5}}, q_{1} = \det \Sigma_{C_{1}} = \det \Sigma_{\set{1,2,3,4,5}}$. Note that \[
\Sigma_{C_{1}} = \begin{bmatrix}
 &&&s_{13} & s_{14}\\
&\Sigma_{S}&& s_{23} & s_{24}\\
&&& s_{35} & s_{45}\\
s_{13} & s_{23} & s_{35} & s_{33} & s_{34}\\
s_{14} & s_{24} & s_{45} & s_{34} & s_{44}
\end{bmatrix}_{(1,2,5,3,4)},\qquad  \Sigma_{S} = \begin{bmatrix}
s_{11} & s_{12} & x\\
s_{12} & s_{22} & y\\
x & y & s_{55}
\end{bmatrix}_{(1,2,5)}.
\] up to reordering of indices. By Schur complement determinant formula, $q_{1} = \det \Sigma_{\set{3,4}} \cdot \det(\Sigma_{S} - W\Sigma_{\set{3,4}}^{-1}W^{\top})$ where $W$ is the $3\times 2$ submatrix on the top right of $\Sigma_{C_{1}}$ indexed by columns $3,4$ and rows $1,2,5$.

We will show that the system $q_{0}=q_{1}=0$ has four distinct intersection points for generic data. This condition can be expressed as the Zariski open condition \[c_{y^{2}}(q_{0})c_{y^{2}}(q_{1})c_{x^{4}}(\text{Res}_{y}(q_{0},q_{1}))\text{Disc}_{x}(\text{Res}_{y}(q_{0},q_{1}))\ne 0\]
where $c_{z^{i}}(p)$ extracts the coefficient of the monomial $z^{i}$ from the polynomial $p$. Indeed, $c_{x^{4}}(\text{Res}_{y}(q_{0},q_{1})) \ne 0$ makes $\text{Res}_{y}(q_{0},q_{1})$ a quartic polynomial, $\text{Disc}_{x}(\text{Res}_{y}(q_{0},q_{1})) \ne 0$ makes its four roots distinct, and finally $c_{y^{2}}(q_{0})c_{y^{2}}(q_{1}) \ne 0$ ensures that each root of $\text{Res}_{y}(q_{0},q_{1})$ gives a common $y$-root for each of the four roots of $\text{Res}_{y}(q_{0},q_{1})$. By Bezout's theorem, there can be at most four roots in $(x,y)$, and the above open condition ensures that the four roots are distinct, thus have multiplicity one each. Since this is an open condition, it is enough to specialize to one data specialization which satisfies the above open condition. Here is the verification in \texttt{M2}.

\begin{lstlisting}[language=Macaulay2]
kk = QQ;
R = kk[x,y];
Sigma0 = matrix {
   {-1, 2, x},
   {2, -2, y},
   {x, y, -5}
};
Sigma1 = matrix{
   {-1, 2, 3, 4, x},
   {2, -2, 6, 7, y},
   {3, 6, -3, 9, 10},
   {4, 7, 9, -4, 11},
   {x, y, 10, 11, -5}
};
q0 = det Sigma0; 
q1 = det Sigma1;
cy20 = coefficient(y^2, q0);
cy21 = coefficient(y^2, q1);
Res = resultant(q0,q1,y);
D = discriminant(Res,x);
(cy20 != 0, cy21 != 0, degree Res, D != 0)
\end{lstlisting}

The outputs are \begin{align*}
\texttt{cy20 != 0} &: \qquad\texttt{true}, \\
\texttt{cy21 != 0} &: \qquad\texttt{true}, \\
\texttt{degree Res} &: \qquad\texttt{\set{4}},\\
\texttt{D != 0} &: \qquad \texttt{true}.
\end{align*}
\end{proof}

\begin{claim}\label{cl:tangent}
$q_{0}=q_{2}=0$ meet at two distinct points with multiplicity two each. 
\end{claim}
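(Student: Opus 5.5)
The plan is to exploit the fact that $q_2$ restricted to the conic $\{q_0=0\}$ is, up to a nonzero constant, minus the square of a \emph{linear} form in $x,y$. This forces every intersection to be tangential. Bezout then gives total intersection number $2\cdot 2=4$, which should split as two points of multiplicity two.

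First I expand $q_2=\det\Sigma_{C_2}$ along the Schur complement of $\Sigma_S$, where $C_2=S\sqcup\set{6}$. Write $w=(s_{16},s_{26},s_{56})^\top$, with all three entries observed and hence in $\K$ (the edges $16,26,56$ are present in $D$). Then
\[
q_2=\det\Sigma_S\,\lp s_{66}-w^\top\Sigma_S^{-1}w\rp=s_{66}\,q_0-w^\top\operatorname{adj}(\Sigma_S)\,w .
\]
Hence $(q_0,q_2)=(q_0,\,w^\top\operatorname{adj}(\Sigma_S)w)$ as ideals in $\K[x,y]$. Next I use the Desnanot--Jacobi identity for the $3\times3$ symmetric matrix $\Sigma_S$, namely $\operatorname{adj}_{33}\operatorname{adj}_{ii}-\operatorname{adj}_{3i}^2=q_0\cdot(\text{a complementary minor})$, together with the analogous relation for $\operatorname{adj}_{11}\operatorname{adj}_{22}-\operatorname{adj}_{12}^2$. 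These say that $\operatorname{adj}(\Sigma_S)$ has rank at most one modulo $q_0$. The key observation is that $c\sett\operatorname{adj}_{33}=s_{11}s_{22}-s_{12}^2$ is a nonzero constant in $\K$, and that $\operatorname{adj}_{13}=s_{12}y-s_{22}x$ and $\operatorname{adj}_{23}=s_{12}x-s_{11}y$ are linear in $x,y$. Completing the square modulo $q_0$ should then give
\[
c\,w^\top\operatorname{adj}(\Sigma_S)w\equiv \ell^2 \pmod{q_0},\qquad \ell\sett \operatorname{adj}_{13}s_{16}+\operatorname{adj}_{23}s_{26}+c\,s_{56},
\]
with $\ell$ an affine-linear form in $x,y$. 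Since $c$ is a unit, this yields $(q_0,q_2)=(q_0,\ell^2)$.

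It remains to show that, for generic data, the line $\{\ell=0\}$ meets the conic $\{q_0=0\}$ transversally in two distinct affine points, and that there are no intersections at infinity. Given this, the local intersection multiplicity of $q_0$ with $\ell^2$ is $2\cdot 1=2$ at each point, for a total of $4$, which matches Bezout. I would certify genericity exactly as in the previous claim. Substituting the parametrization of $\ell=0$ into $q_0$ gives a univariate quadratic. Its leading coefficient and its discriminant are polynomials in the data, and their nonvanishing is a Zariski open condition. A single rational witness, checked in \texttt{M2} as before (with $\operatorname{Res}_y(q_0,\ell)$ of degree $2$ and nonzero discriminant), then suffices. One should also confirm on the same witness that $\operatorname{Res}_y(q_0,q_2)$ has the form $\kappa\,g(x)^2$, where $g$ is quadratic with distinct roots; this serves as an independent consistency check.

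I expect the main obstacle to be the completion-of-squares congruence: keeping track of exactly which $2\times2$ minors of $\operatorname{adj}(\Sigma_S)$ are multiples of $q_0$. This is where the symmetric Desnanot--Jacobi identity $\operatorname{adj}_{ii}\operatorname{adj}_{jj}-\operatorname{adj}_{ij}^2=q_0\,(\Sigma_S)_{kk}$, for $\set{i,j,k}=\set{1,2,3}$, and its off-diagonal analogue must be applied carefully. If this bookkeeping becomes messy, the fallback is to verify directly in \texttt{M2}, symbolically in the data, that $c\,q_2+\ell^2\in(q_0)$ in $\K[x,y]$.
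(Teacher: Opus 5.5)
Your argument is correct, and it reaches the paper's key identity by a different computation.

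\textbf{The identity.} The paper takes the Schur complement with respect to the constant $2\times2$ block $A=\Sigma_{\{1,2\}}$, which is common to $\Sigma_S$ and $\Sigma_{C_2}$. This gives in one line the exact identity $q_2=\kappa q_0-\Delta\ell^2$, with $\kappa,\Delta$ constants and $\ell=e-b^\top A^{-1}w$. You instead expand the bordered determinant over the non-constant block $\Sigma_S$. You then need Jacobi's complementary-minor theorem to get $c\,q_2+\ell^2\in(q_0)$. Your $c$ equals $\Delta$ and your $\ell$ is $\Delta$ times the paper's, so the two identities agree.

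Your route costs more bookkeeping. The relations in your second paragraph only give $(c\operatorname{adj}_{12})^2\equiv(\operatorname{adj}_{13}\operatorname{adj}_{23})^2 \pmod{q_0}$. So the off-diagonal minor you mention at the end (rows $\{1,3\}$, columns $\{2,3\}$) is genuinely needed to fix the sign. You also obtain only a congruence. A degree count does show your multiplier of $q_0$ is the constant $\Delta\kappa$. Still, the paper's exact form with explicit $\kappa$ is reused later in the pole analysis, where $q_0$ and $q_2=\kappa(q_0-v^2)$ serve as local coordinates.

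\textbf{The multiplicities.} Here your route is tighter. From $(q_0,q_2)=(q_0,\ell^2)$ you get the local intersection number $I_p(q_0,\ell^2)=2I_p(q_0,\ell)=2$ directly from transversality of the line. The paper instead shows that the two conics share a tangent line at $p$, which gives only $I_p\geq2$. It then uses Bezout and the existence of two distinct points to force equality.

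Your separate check for intersections at infinity is not needed for the claim. It is also automatic once $\ell=0$ meets the conic in two distinct affine points.
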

\begin{proof}
Recall $q_{0} = \det \Sigma_{S} = \det \Sigma_{\set{1,2,5}}, q_{2} = \det \Sigma_{C_{2}} = \det \Sigma_{\set{1,2,5,6}}$. Note that \[
\Sigma_{C_{2}} = \begin{bmatrix}
A & w & b\\
w^{\top} & d & e \\
b^{\top} & e & t
\end{bmatrix}_{(1,2,5,6)}, \qquad  
\Sigma_{S} = \begin{bmatrix}
A & w\\
w^{\top} & d
\end{bmatrix}_{(1,2,5)}
\]
where $A$ is a generic symmetric $2\times 2$ matrix, $b$ is a generic $2\times 1$ vector, $d,e,t$ are generic scalars and $w$ is the $2\times 1$ vector with entries $x,y$. Let \[
h=d-w^{\top}A^{-1}w, \qquad \ell = e-b^{\top}A^{-1}w, \qquad \kappa = t-b^{\top} A^{-1}b, \qquad \Delta = \det A.
\] 
First restrict to the parameter space open set $\Delta\kappa\ne 0$ (which is true generically). Then by Schur determinant formula (with respect to $A$ in $\Sigma_{C_{2}}$), $q_{2} = \det \Sigma_{C_{2}} = \kappa q_{0} - \Delta \ell^{2}$. Therefore $(q_{0},q_{2}) = (q_{0},\ell^{2})$. For generic data, the conic $q_0=0$ is smooth and the line $\ell=0$ meets it transversely at two distinct points. These are open conditions and showing that this happens for one specialization is enough to prove this generically.  Indeed, taking $A=I_{2}, b=(1,0), d=1,e=0,t=2$ results in $q_{0} = 1-x^{2}-y^{2}, q_{2} = 1-2x^{2}-y^{2}, \ell=-x$. Plugging $x=0$ into $q_{0}=0$ gives the equation $y=\pm 1$ hence nonzero discriminant. Therefore $q_{0}=q_{2}=0$ has two distinct points of intersection, for generic data.

Now let $p$ be such that $q_{0}(p)=q_{2}(p)=0$. WLOG $p=0$ by change of variables. Write $q_{0} = L + Q$ where $L\ne 0,Q\ne 0$ are homogeneous linear and quadratic respectively. $p=0$ is also a root of $\ell$ whence $\ell$ is homogeneous of degree one. Thus $\ell^{2}$ is homogeneous of degree two. This means $q_{2} = \kappa L + (\kappa Q - \Delta\ell^{2})$ where the terms of homogeneous degree two are in the parentheses. Therefore, both the conics $q_{0},q_{2}$ are smooth at $p$ and have the same tangent line $L=0$. Hence their intersection multiplicity at $p$ is at least two.

Since, generically, there are two distinct roots and their multiplicities sum to at most four (by Bezout's theorem), the intersection multiplicity at each distinct root is exactly two.
\end{proof}

\begin{lemma}\label{lem:no-triple-intersect}
$V(q_{0},q_{1},q_{2}) = \varnothing$ generically. 
\end{lemma}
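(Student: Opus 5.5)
The plan is to use the structure already found in the proof of \Cref{cl:tangent}. In that proof the Schur complement with respect to $A$ gave $q_{2}=\kappa q_{0}-\Delta\ell^{2}$. Hence, on the open set $\Delta\kappa\ne0$, any common zero of $q_{0}$ and $q_{2}$ satisfies $\ell=0$. So $V(q_{0},q_{1},q_{2})=V(q_{0},\ell,q_{1})$. Also, the earlier claims show that $V(q_{0},q_{2})$ consists of exactly two points for generic data, namely the two transverse intersections of the line $\ell=0$ with the conic $q_{0}=0$. It therefore suffices to show that $q_{1}$ vanishes at neither of these two points, for generic data.

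Next I would express the problem as a Zariski open condition. Eliminate along the line: $\ell$ is affine-linear in $(x,y)$ with coefficients $-b^{\top}A^{-1}$, which is generically nonzero. So one can solve $\ell=0$ for one variable, say $x$ in terms of $y$, after possibly swapping roles. Substituting gives univariate polynomials $\tilde q_{0}(y)$ of degree two and $\tilde q_{1}(y)$ of degree at most two. The empty triple intersection is then equivalent to $\operatorname{Res}_{y}(\tilde q_{0},\tilde q_{1})\neq0$, given a nonvanishing leading coefficient of $\tilde q_{0}$. An alternative formulation is that the ideal $(q_{0},q_{1},q_{2})$ contains $1$ over the generic data field. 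Either way the condition is polynomial in the observed data after clearing denominators, and it is open. The data entering $q_{1}$ but not $q_{0},q_{2}$, namely $s_{13},s_{14},s_{23},s_{24},s_{35},s_{45},s_{33},s_{34},s_{44}$, are independent of the data defining $\ell$ and $q_{0}$. This makes it plausible that the resultant is not identically zero.

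Finally I would exhibit one witness. The easiest choice is to reuse the specialization from \Cref{cl:tangent}: $A=I_{2}$, $b=(1,0)$, $d=1$, $e=0$, $t=2$. Then $\ell=-x$, and the two points are $(0,\pm1)$. It remains to pick the $C_{1}$-data so that $q_{1}(0,\pm1)\ne0$. By the Schur formula from the first claim, $q_{1}=\det\Sigma_{\{3,4\}}\det(\Sigma_{S}-W\Sigma_{\{3,4\}}^{-1}W^{\top})$. Taking $\Sigma_{\{3,4\}}=I$ and $W$ to have only a small nonzero entry in the row of vertex $5$, say $s_{35}=\varepsilon$, gives $q_{1}=(1-\varepsilon^{2})-x^{2}-y^{2}$. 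At $(0,\pm1)$ this equals $-\varepsilon^{2}\neq0$. The one real check is that the specialization is consistent, since the entries $s_{11},s_{12},s_{22},s_{55}$ are shared by all three $q_{i}$. In this choice they are $1,0,1,1$ throughout, so it is consistent. An \texttt{M2} computation of \texttt{ideal(q0,q1,q2) == ideal 1} at this point would serve as an independent confirmation.

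The main obstacle is the first step. One must justify that it is legitimate to restrict to $\Delta\kappa\ne0$ and to the locus where $\ell$ is a genuine line. One must also check that the open condition is nonempty exactly when the witness works, which means no hidden triple points at infinity or at degenerate data. Working in affine coordinates with the resultant formulation handles this, provided the leading coefficients are included in the open condition.
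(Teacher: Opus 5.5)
Your proof is correct, but it takes a different route from the paper. The paper's argument is uniform and computer-based. It homogenizes the three conics and evaluates the Macaulay resultant of $Q_0,Q_1,Q_2$ in \texttt{M2} at one integer specialization. From its nonvanishing it concludes that there is generically no common zero even in $\mathbb P^2$. Openness is automatic there, because that resultant vanishes exactly on the closed locus of data with a projective common zero.

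You instead reuse the Schur identity $q_2=\kappa q_0-\Delta\ell^2$ from \Cref{cl:tangent} to trade $q_2$ for the line $\ell=0$. The question then becomes whether $q_1$ vanishes at the two points of $V(q_0,\ell)$. After eliminating one variable along $\ell$, this is a univariate resultant. Your concentric-circle witness ($q_0=1-x^2-y^2$, $q_1=1-\varepsilon^2-x^2-y^2$, $\ell=-x$) is checkable by hand and gives resultant $\varepsilon^4$.

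Your route gains a structural, computer-free argument. It pays in two ways. First, you only obtain the affine statement, though that is all the paper uses later, for instance that $q_1$ is a unit at the tangency points. Second, openness is no longer automatic. Emptiness of an affine fiber is merely constructible in the data, so a single witness proves genericity only because you package the condition as the nonvanishing of one polynomial in the data. After clearing denominators, that polynomial is the product of $\Delta$, the coefficient of $\ell$ you divide by, the leading coefficient of $\tilde q_0$, and $\operatorname{Res}_y(\tilde q_0,\tilde q_1)$.

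You flag this correctly. Just state explicitly that your witness makes every factor nonzero: $\Delta=\kappa=1$, the coefficient of $x$ in $\ell$ is $-1$, and the leading coefficient of $\tilde q_0=1-y^2$ is $-1$. Also note that your worry about triple points at infinity is irrelevant for the affine statement you are proving.
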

\begin{proof}
It suffices to show that the homogenized conics have no common projective zero. This is a Zariski open condition and can be checked with a specialization. Use \[\Sigma = \begin{bmatrix}
-1&1&2&3&x&4\\
1&-2&5&6&y&7\\
2&5&-3&8&9&\alpha\\
3&6&8&-4&10&\beta\\
x&y&9&10&-5&11\\
4&7&\alpha&\beta&11&-6
\end{bmatrix}.\]

The following calculation uses the Macaulay2 package \texttt{Resultants}~\cite{Stagliano2018}.
\begin{lstlisting}[language=Macaulay2]
needsPackage "Resultants";
R = QQ[x,y,z];
Sigma0 = matrix {
   {-1, 1, x},
   {1, -2, y},
   {x, y, -5}
};
Sigma1 = matrix{
   {-1, 1, 2, 3, x},
   {1, -2, 5, 6, y},
   {2, 5, -3, 8, 9},
   {3, 6, 8, -4, 10},
   {x, y, 9, 10, -5}
};  
Sigma2 = matrix{
   {-1, 1, x, 4},
   {1, -2, y, 7},
   {x, y, -5, 11},
   {4, 7, 11, -6}
};
Q0 = homogenize(det Sigma0, z);
Q1 = homogenize(det Sigma1, z);
Q2 = homogenize(det Sigma2, z);
rho = resultant matrix{{Q0,Q1,Q2}};
rho != 0
\end{lstlisting}
The output of the last line is \texttt{true}.
\end{proof}

We still need to discount the roots which occur at infinity and those roots of the numerator which are also roots of the denominator.

\begin{lemma}
For generic observed data, the homogenizations of $F_{x}$ and $F_{y}$ have no common zero on the line at infinity.
\end{lemma}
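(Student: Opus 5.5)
We want to show that the homogenized numerators $F_x,F_y$ of the one-form $\Omega=q_0q_2\,\d q_1+q_0q_1\,\d q_2-q_1q_2\,\d q_0$ have no common zero on the line $\set{z=0}$. The plan is to reduce this to a statement about the top-degree forms of $q_0,q_1,q_2$ and then close it with one witness computation, as in the preceding claims. Each $q_i$ is a quadratic polynomial in $(x,y)$. Write $Q_i$ for its homogeneous degree-two part, a binary quadratic form in $(x,y)$. Then the degree-five part of $F_x$ is
\[
Q_0Q_2\,\partial_xQ_1+Q_0Q_1\,\partial_xQ_2-Q_1Q_2\,\partial_xQ_0,
\]
and similarly for $F_y$. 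A common zero at infinity is a point $[x:y:0]$ at which both of these quintic binary forms vanish. So we must show that the two top forms $\hat F_x,\hat F_y$ share no linear factor, equivalently that $\operatorname{Res}_{[x:y]}(\hat F_x,\hat F_y)\neq0$.

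Before computing, we simplify the top forms with Euler's identity. Since $x\,\partial_xQ_i+y\,\partial_yQ_i=2Q_i$, we get
\[
x\hat F_x+y\hat F_y=2Q_0Q_1Q_2 .
\]
So at a common zero $[x:y]$, one of the $Q_i$ vanishes there. Suppose $Q_0$ vanishes at that point and the others do not. Then $\hat F_x=-Q_1Q_2\,\partial_xQ_0$ and $\hat F_y=-Q_1Q_2\,\partial_yQ_0$, so both partials of $Q_0$ vanish. This forces $[x:y]$ to be a repeated root of $Q_0$, so $\operatorname{Disc}(Q_0)=0$. The same argument applies to $Q_1$ and $Q_2$. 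If two of the forms vanish at the point, then they share a root, so their resultant is zero.

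The genericity condition to verify is therefore that each $Q_i$ has nonzero discriminant and the three forms pairwise share no root. Together these imply $\operatorname{Res}(\hat F_x,\hat F_y)\neq0$. The condition is a nonvanishing of finitely many polynomials in the data, hence Zariski open. It suffices to exhibit one specialization, and the matrix used in \Cref{lem:no-triple-intersect} is the natural candidate.

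The main obstacle is that the top forms of $q_0$ and $q_2$ are related, because $q_2=\kappa q_0-\Delta\ell^2$ as in \Cref{cl:tangent}. This gives $Q_2=\kappa Q_0-\Delta\hat\ell^{\,2}$, where $\hat\ell$ is the linear part of $\ell$. Then $Q_0$ and $Q_2$ share a root only if $\hat\ell$ vanishes at a root of $Q_0$. Since $\hat\ell=-b^\top A^{-1}w$ and $Q_0=-\det A\cdot w^\top A^{-1}w$ (up to sign conventions), this holds exactly when $b^\top A b$ type quantities vanish. That is itself a proper Zariski-closed condition, so generically it fails. We would confirm this, together with the remaining discriminants and resultants, by computing the discriminants of $Q_0,Q_1,Q_2$ and the three pairwise resultants for the specialization in \Cref{lem:no-triple-intersect}, or by directly computing $\operatorname{Res}(\hat F_x,\hat F_y)$ in \texttt{M2}. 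The risk is that some structural coincidence beyond this one forces a degenerate top form. If a witness fails, we would first try a different random rational specialization before suspecting structure.
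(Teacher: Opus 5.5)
Your proposal is correct and is essentially the paper's proof. Both use the Euler identity $x\hat F_x+y\hat F_y=2Q_0Q_1Q_2$ to force any common zero at infinity onto a root of some $Q_i$, rule that out by requiring the $Q_i$ to be squarefree and pairwise coprime, and certify these open conditions with the specialization of \Cref{lem:no-triple-intersect}; the paper also checks that no $Q_i$ vanishes at $[1:0]$, since it computes after setting $y=1$. Your detour through $Q_2=\kappa Q_0-\Delta\hat\ell^{\,2}$ is not needed, and the closed condition there is precisely $b^\top A^{-1}b=0$, not a ``$b^\top Ab$-type'' quantity (in the witness, $b^\top A^{-1}b=-137$, matching the paper's resultant $18769=137^2$).
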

\begin{proof}
Since $q_{i}$ are quadratics, let $Q_{i}(x,y)$ be the degree two homogeneous part of $q_{i}(x,y)$. Let the additional projectivization variable be $w$, to get to a degree five homogeneous polynomial. The roots at infinity would have $w=0$ and the equations are
\[F_{z}^{\infty}\sett Q_{0}Q_{2}\partial_{z} Q_{1} + Q_{0}Q_{1}\partial_{z}Q_{2} - Q_{1}Q_{2}\partial_{z} Q_{0} = 0, \qquad z\in \set{x,y}.\]
Euler's theorem gives $x\partial_{x}Q_{i}+y\partial_{y}Q_{i}=2Q_{i}\fa i\in \set{0,1,2}$ since $Q_{i}$ is homogeneous. This means $xF_{x}^{\infty}+yF_{y}^{\infty} = 2Q_{0}Q_{1}Q_{2}$. For generic data, $Q_{0},Q_{1},Q_{2}$ are squarefree and pairwise coprime. These open conditions hold for the specialization mentioned in \Cref{lem:no-triple-intersect} where
\[
\begin{aligned}
Q_0 &= 2x^2+2xy+y^2,\\
Q_1 &= -792x^2+516xy-191y^2,\\
Q_2 &= 37x^2-68xy+10y^2.
\end{aligned}
\]
After setting $y=1$, their discriminants are respectively $-4,-338832,3144$, and the resultants of $(Q_0,Q_1),(Q_0,Q_2),(Q_1,Q_2)$ are respectively $2517268,18769, 272860201$. All are nonzero, and none of the forms vanishes at $[1:0]$. If $F_{x}^{\infty} = F_{y}^{\infty} = 0$ then some $Q_{i}=0$ whence $\partial_{z}Q_{i}=0\fa z\in\set{x,y}$, contradicting squarefreeness of $Q_{i}$. Hence there is no common zero at infinity.
\end{proof}

Therefore we now have $25$ roots of $\Omega$ in the affine plane. It remains to discount the roots at the poles. The denominator can vanish if at least one $q_{i}$ vanishes. Suppose $p \in V(q_{0})\smallsetminus V(q_{1}q_{2})$. Then $\Omega(p) = -q_{1}(p)q_{2}(p)\d q_{0}(p)$. By genericity, $dq_{0}(p)\ne 0$. Therefore, $p$ is a non-root of $\Omega$. A similar argument holds if the conics are interchanged. So there is no root at a pole where only one $q_{i}$ vanishes. Now suppose $p$ is an intersection of two of these conics, say $p\in V(q_{0},q_{1})\smallsetminus V(q_{2})$. Then $\Omega(p) = 0$. Same argument holds for the other pairs of conics. There is no triple intersection. Therefore, we need to discount the roots of $\Omega=0$ at points where pairs among the curves $\set{q_{i}=0}_{i=0,1,2}$ intersect.

\begin{itemize}[leftmargin=*]
\item \textbf{Intersection of $q_{0}=q_{1}=0$.}  They meet transversally. One can use local coordinates $u=q_{0}, v=q_{1}$. Let $q_{2} = g(u,v)$ and this is a unit at $(0,0)$ since there is no triple intersection. So $\Omega/g = -v\d u + u\d v+uv\d \log g$. Say $\d \log g = a(u,v)\d u + b(u,v)\d v$. The coefficient of $\d u, \d v$ respectively are $A=auv-v = v(au-1), B=buv+u = u(bv+1)$. The intersection multiplicity is $\dim_{\C}\C[[u,v]]/(A,B)$. Both $au-1,bv+1$ are units in $\C[[u,v]]$ because their constant terms are units in $\C$ whence $(A,B) = (u,v)$ and thus the multiplicity is one at every intersection point. This contributes four roots because there are four simple intersections of $q_{0}=q_{1}=0$.
\item \textbf{Intersection of $q_{1}=q_{2}=0$.} The same argument as above lets us discount four simple roots of the numerator at the poles of the rational likelihood equations.
\item \textbf{Intersection of $q_{0}=q_{2}=0$.} This is the interesting case. We know these meet at two distinct points with multiplicity two each. Focus on one point and translate it to the origin. Recall from the proof of \Cref{cl:tangent} that $q_{2} = \kappa (q_{0} - v^{2})$, where $v=\ell\sqrt{\Delta/\kappa}$, on the open set $\Delta\kappa\ne 0$. These give local coordinates $q_{0}=u, q_{2} = \kappa(u-v^{2})$. $q_{1}$ is a unit because there is no triple intersection. Take $\tilde g = \kappa q_{1}$. Then $\Omega/\tilde g = u(u-v^{2}) \d\log \tilde g + v^{2} \d u - 2uv\d v$. Say $\d \log \tilde g = a(u,v) \d u + b(u,v)\d v$. Let $a_{0} \sett a(0,0), b_{0} \sett b(0,0)$. The coefficient of $\d u, \d v$ respectively are $A = v^{2} +au^{2}-auv^{2}, B = -2uv + bu^{2} - buv^{2}$. Therefore the intersection multiplicity is that of the origin of the curves $A=B=0$. Work over their completed coordinate ring $R = \hat\cO_{\mathbb A^{2},0} = \C[[u,v]]$. We want to find $\dim_{\C}R/(A,B)$. Let $f=v^{2}+a_{0}u^{2}, g=-2uv+b_{0}u^{2}$ be their respective homogeneous quadratic parts. They are coprime in $\C[[u,v]]$ precisely when $4a_{0} + b_{0}^{2} \ne 0$. A common projective zero must have $u\ne0$ and $v=b_{0}u/2$, so it exists precisely when $4a_{0}+b_{0}^{2}=0$. This is an open condition and direct computation with the specialization in \Cref{lem:no-triple-intersect} verifies $4a_{0}+b_{0}^{2}\ne 0$ at both tangencies, so this holds generically. We will show that $(A,B) = (f,g)$ in $R$. Note that $R$ is a local ring with maximal ideal $\mathfrak m = (u,v)$. Consider $J\sett (f,g), I=(A,B)$. Since $f,g$ are coprime and the binary homogeneous cubics form a four-dimensional space, $\mathfrak m J = \mathfrak m^{3}$. Indeed, $uf,ug,vf,vg$ is a basis of the space of binary homogeneous cubics. But $A-f,B-g\in \mathfrak m^{3} = \mathfrak mJ \subseteq J$. Therefore $\begin{bmatrix}A\\B\end{bmatrix} = (I_{2}+M)\begin{bmatrix}f\\g\end{bmatrix}$ where $M$ has entries in $\mathfrak m$. So $I\subseteq J$. Additionally, $\det (I_{2}+M) \in 1+\mathfrak m$ whence this matrix is invertible. This shows $J \subseteq I$.  Therefore the required multiplicity is $\dim_{\C}\C[[u,v]]/(A,B) = \dim_{\C}\C[[u,v]]/(f,g) = 4$ by \cite[Section~3.3, Theorem~3]{fulton_book_curves}.
\end{itemize}

$F_{x}, F_{y}$ are quintics. Projectivizing them gives $25$ roots by Bezout's theorem. Counted with multiplicity, there are $4+4+2\times 4=16$ zeroes of the numerator which are also poles. Therefore there are a total of $25-16=9$ roots of the numerator of the likelihood equations away from its poles. 
 
As a final verification step, the following rational specialization returns a zero-dimensional ideal of degree $9$ and verifies the above.

\begin{lstlisting}[language=Macaulay2]
R = QQ[x15,x25,x36,x46];
Sigma = matrix {
  {-1, 1,  2,  3,  x15, 4},
  {1,  -2, 5,  6,  x25, 7},
  {2,  5,  -3, 8,  9, x36},
  {3,  6,  8,  -4, 10, x46},
  {x15, x25, 9, 10, -5, 11},
  {4,  7,  x36, x46, 11, -6}
};
D = det Sigma;
cofactorNumerator = (M,i,j) -> (
  n := numRows M;
  rows := select(toList(0..(n-1)), r -> r != j);
  cols := select(toList(0..(n-1)), c -> c != i);
  (-1)^(i+j) * det submatrix(M,rows,cols)
);
missingEdges = {{0,4},{1,4},{2,5},{3,5}};
I0 = ideal apply(missingEdges, e -> cofactorNumerator(Sigma,e#0,e#1));
I = saturate(I0,ideal(D));
(dim I, degree I)
\end{lstlisting}

The output $(0,9)$ proves that this particular fiber has length $9$, counted with multiplicity, and agrees with \Cref{prop:double-blow-up-degree}.

\end{document}